\newtheorem{thm}{Theorem}[section]
\newtheorem{lem-dfn}[thm]{Lemma-Definition}
\newtheorem{prop}[thm]{Proposition}
\newtheorem{cor}[thm]{Corollary}
\theoremstyle{definition}
\newtheorem{defn}[thm]{Definition}
\newtheorem{exam}[thm]{Example}
\newtheorem{ex}[thm]{Example}
\newtheorem{quest}[thm]{Question}
\newtheorem*{acknowledgement}{Acknowledgement}
\theoremstyle{remark}
\newtheorem{clm}{Claim}
\newtheorem{rem}[thm]{Remark}
\numberwithin{equation}{section}
\newcommand{\cF}{\mathcal F}
\newcommand{\cI}{\mathcal I}
\newcommand{\cL}{\mathcal L}
\newcommand{\cO}{\mathcal O}
\newcommand{\cR}{\mathcal R}
\newcommand{\m}{\mathfrak m}
\newcommand{\n}{\mathfrak n}
\newcommand{\MM}{\mathfrak M}
\newcommand{\bbN}{\mathbb N}
\newcommand{\bbZ}{\mathbb Z}
\DeclareMathOperator{\Spec}{Spec}
\DeclareMathOperator{\Proj}{Proj}
\DeclareMathOperator{\supp}{Supp}
\DeclareMathOperator{\nr}{nr}
\DeclareMathOperator{\br}{\bar r}
\DeclareMathOperator{\chara}{char}
\DeclareMathOperator{\di}{div}
\DeclareMathOperator{\Aut}{Aut}
\renewcommand{\:}{\colon}
\begin{document}

\title[Normal Hilbert coefficients and  elliptic ideals]{Normal Hilbert coefficients and  elliptic ideals in normal two-dimensional singularities}

%
%
%

\author[T.Okuma]{Tomohiro Okuma}
  \address{Department of Mathematical Sciences, 
Yamagata University,  Yamagata, 990-8560, Japan}
  \email{okuma@sci.kj.yamagata-u.ac.jp}

\author[M.E.Rossi]{Maria Evelina Rossi}
  \address{Dipartimento di Matematica
Universita' degli Studi di Genova
Via Dodecaneso 35 I-16146 Genova}
  \email{rossim@dima.unige.it}

\author[K.-i.Watanabe]{Kei-ichi Watanabe}
  \address{Department of Mathematics, College of Humanities and 
  Sciences, Nihon University, Setagaya-ku, Tokyo, 156-8550, Japan 
  and Organization for the Strategic Coordination of Research and 
  Intellectual Properties, Meiji University}
  \email{ watnbkei@gmail.com}

\author[K. Yoshida]{Ken-ichi Yoshida}
  \address{Department of Mathematics, 
College of Humanities and Sciences, 
Nihon University, Setagaya-ku, Tokyo, 156-8550, Japan}
  \email{yoshida.kennichi@nihon-u.ac.jp}

\subjclass[2020]{13G05, 14J17,13H10, 14J27}
\keywords{Hilbert coefficients, elliptic ideals, elliptic singularities,  reduction number,  2-dimensional normal domain}

\thanks{
The first author was partially supported by JSPS Grant-in-Aid 
for Scientific Research (C) Grant Number 17K05216.
The second author was partially supported by PRIN 2020355B8Y.
The third was partially supported by 
  JSPS Grant-in-Aid for Scientific Research (C) Grant Number 
  20K03522 and by GNSAGA INdAM, Italy.
The fourth author was partially supported by JSPS Grant-in-Aid 
for Scientific Research (C) Grant Number 19K03430.
}

\maketitle

\begin{abstract}
Let $(A,\m)$ be an excellent  two-dimensional normal local domain. 
In this paper we study the elliptic and the strongly elliptic ideals  of $A$ with the aim to characterize  elliptic and strongly elliptic singularities,   according to the definitions given  by  Wagreich and by Yau.  
In analogy with the rational singularities, in the main result we characterize a strongly elliptic singularity in terms  of the normal Hilbert coefficients  of the integrally closed $\m$-primary ideals of $A$.    
Unlike  $p_g$-ideals, elliptic  ideals and  strongly elliptic ideals are not necessarily normal and   necessary and sufficient conditions for being normal are given. 
In the last section we discuss the existence (and the effective construction) of strongly elliptic ideals in any two-dimensional normal local ring.
\end{abstract}
\section{Introduction and Notations}
Let $(A,\m)$ be an excellent  two-dimensional normal local ring 
and let $I$ be an $\m$-primary ideal of $A$.  
The integral closure $\bar{I}$ of $I$ is the ideal consisting of all solutions $z$ of some equation with coefficients $c_i \in I^i $:  
$Z ^n +c_1 Z^{n-1} +c_2 Z^{n-2} + \dots+ c_{n-1} Z + c_n=0$. 
Then $ I \subseteq \bar{I} \subseteq \sqrt{I}$. 
We say that $I$ is \textit{integrally closed} if $I = \bar{I}$ and $I$ is \textit{normal} if $I^n= {\overline{I^n}} $ for every positive integer $n$.  
By a classical result of Rees \cite{R}, under our assumptions, 
the filtration $\{\overline{I^n}\}_{n \in \bbN}$ is a good $I$-filtration of $A$ 
and it is called the \textit{normal filtration}. 
\par \vspace{1mm}
We may define the Hilbert Samuel function $\bar{H}_I(n):=\ell_A(A/\overline{I^{n+1}})$ for all integers $n \ge 0$ and it becomes a polynomial for large $n$   
(here $\ell_A(M) $ is the length of the $A$-module $M$). 
This polynomial  is called the \textit{normal Hilbert polynomial}
\[
\bar{P}_I(n) =\bar e_0(I) \binom{n+2}{2}
-\bar e_1(I) \binom{n+1}{1}
+\bar e_2(I), 
\]
and the  coefficients  $\overline{e}_i(I)$, $i=0,1,2,$ are the 
\textit{normal Hilbert coefficients}. 

\par 
A rich literature is available  on the normal Hilbert coefficients $\bar e_i(I)$ and this study is considered an important part of the theory of blowing-up rings, see for instance  
\cite{CPR1, CPR2, Hun, It1, It2, MOR, MORT, MSV, RV}. 
\par 
From the geometric side, any   integrally closed  $\m$-primary 
ideal $I$ of $A  $ is represented on some resolution, see \cite{Li}. 
Let $$f \colon X \to \Spec A$$  be a resolution of singularities  
with  an anti-nef cycle $Z>0$ on $X$ so that 
$I =I_Z= H^0(\mathcal{O}_X(-Z))$ 
 and $I\mathcal{O}_X=\mathcal{O}_X(-Z)$.    
We say that $I = I_Z$ is \textit{represented by $Z $ on $X$}.    
The aim of this paper is to join the algebraic 
and the geometric information on $A$ 
taking advantage of the theory of the Hilbert functions and 
of  the theory of the resolution of singularities. 
\par 
For  a coherent $\mathcal{O}_X$-module $\mathcal{F}$, we write 
$h^i(\mathcal{F})=\ell_A(H^i(X, \mathcal{F}))$.   
If $I=I_Z$ is an $\m$-primary integrally closed ideal of $A$ 
represented by $Z$ on $X$,  one can define for every integer 
$n \ge 0$  a decreasing chain of integers 
$q(nI):= q(\overline{I^n}) = h^1(\mathcal{O}_X(-nZ))$ 
where $q(0I) := p_g(A)$ is the  \textit{geometric genus}  of $A$.       
It is proved  that  $q(nI) $ stabilises for every $I$ and $n\ge p_g(A)$. 
We denote it by $q(\infty I)$. 
\par 
These integers are independent of the representation 
and they are strictly related to the  normal Hilbert polynomial.  
The key of our approach can be considered Theorem \ref{kato} and  
Proposition \ref{p:normalHP},  consequences of Kato's Riemann-Roch 
formula (see \cite{kato} and \cite{OWY2}).   
In particular the following holds:
\begin{enumerate}
\item $\overline{P}_I(n)=\ell_A(A/\overline{I^{n+1}})$ for all $n \ge p_g(A)-1$. 
\item 
$\bar e_1(I)-e_0(I) + \ell_A(A/I) =p_g(A) - q(I)$. 
\item $\bar e_2(I)= p_g(A)-q(nI)=p_g(A)-q(\infty I)$ for all $n \ge p_g(A)$. 
\end{enumerate}
Moreover, we have 
\[
\bar e_0(I)= -Z^2,\qquad \bar e_1(I)= \dfrac{-Z^2+ZK_X}{2}. 
\]
\par 
This makes the bridge between the theory of the normal Hilbert 
coefficients and the theory of the singularities. 
This is the line already traced by Lipman \cite{Li}, Cutkosky \cite{C} 
and more recently  by  Okuma,  Watanabe and  Yoshida, 
see \cite{OWY1, OWY2, OWY3}.  
\par 
Let $(A,\m)$ be a two-dimensional excellent normal local domain 
containing an algebraically closed field $k= A/\m$.   
It is known that $A$ is a rational singularity (see \cite{Ar}) if and only if 
every  integrally closed $\m$-primary ideal $I$ of $A$ is normal 
(see \cite{Li} and  \cite{C}), equivalently $\bar e_2(I)=0$,  that is  
$I$ is a $p_g$-ideal,  as proved in \cite{OWY1, OWY2}.  
Inspired by a paper by the first author \cite{Ok},  we  investigate the 
integrally closed $\m$-primary ideals  of \textit{elliptic singularities} 
(see   Wagreich \cite{W})  and of \textit{strongly elliptic singularities} 
(see  Yau  \cite{Y}).  
All the preliminary results are contained in Section 2. 

\vskip 1mm
In Section 3 we prove the main results of the paper. 
We define {\it{the elliptic and the strongly elliptic ideals}}  
aimed by the  study of non rational singularities. 
We recall that if  $Q$ is a minimal reduction of $I$, then 
we  denote by  
$\br(I):= \min\{ r \;|\; \overline{I^{n+1}} = Q \overline{I^n}\;
\text{for all} \; n \ge r\}$ \textit{the normal reduction number}  of $I$ 
and  this integer  exists and does not depend on the choice of $Q$. 
Okuma  proved that if $A$ is an elliptic singularity, then 
$\br(I) = 2$ for any integrally closed $\m$-primary ideal of $A$,  
see \cite[Theorem 3.3]{Ok}.   
According to Okuma's result,  we define  \textit{elliptic ideals} 
to be the integrally closed $\m$-primary ideals satisfying $\br(I)= 2$.  
In Theorem \ref{nr2} we prove that elliptic ideals satisfy   
$\bar e_2(I)=\bar{e}_1(I) -e_0(I) + \ell_A( A/I)>0 $ attaining  the  minimal 
value according to the inequality proved by Sally \cite{S2} and Itoh 
\cite{It2}. 
In particular if $I$ is an elliptic ideal,  then $p_g(A) > q(I)=q(\infty I)$. 
If $A$ is not a rational singularity, then elliptic ideals always exist, see Proposition \ref{exist}.  In particular  we prove
\vskip 1mm
\noindent {\bf{Proposition.}} (See Proposition 3.3.)   
{\it{If  $A$ is not a rational singularity, then 
for any $\m$-primary integrally closed ideal $I$ of $A$, 
$\overline{I^{n}}$ is either a $p_g$-ideal or 
an elliptic ideal for every $n \ge p_g(A)$. }}

\par \vspace{2mm}
Yau in \cite{Y}, Laufer  in  \cite{La} and  Wagreich in \cite{W} 
introduced  interesting classes of elliptic singularities.   
An excellent  two-dimensional normal local ring  
$A$ is a {{strongly  elliptic singularity}} if $ p_g(A)=1$,   
that is $p_g$ is almost minimal.    
\par 
Among the elliptic ideals, in Theorem \ref{e_2=1} we define  strongly elliptic ideals those for which $\bar e_2 =1 $ and equivalent conditions are given. 
The following result characterizes algebraically  the strongly  elliptic singularities. 
\vskip 1mm
\noindent {\bf{Theorem.}} (See Theorem 3.14.) 
\textit{Let $(A,\m)$ be a two-dimensional excellent normal local domain containing an algebraically closed field $k= A/\m$ 
 and assume that $p_g(A)>0$. 
The following conditions are equivalent:}
\begin{enumerate}
\item \textit{$A$ is a strongly elliptic singularity.} 
\item \textit{Every integrally closed ideal of $A$ is either a $p_g$-ideal or  a  strongly  elliptic ideal.}
\end{enumerate}
\par 
Notice that $p_g$-ideals are always normal, but  
elliptic ideals are not necessary normal, see Proposition \ref{normal}, Examples \ref{exnormal} and \ref{noti}.  
Moreover if $A$ is strongly  elliptic and $I$  is not a $p_g$-ideal,   then 
Proposition \ref{normal} and Theorem \ref{I2} give necessary and sufficient 
conditions for being $I$ normal.
\vskip 1mm
\noindent {\bf{Theorem.}}  {\it{ Let $(A,\m)$ be a two-dimensional 
excellent normal local domain 
containing an algebraically closed field $k= A/\m$. 
Assume that $A$ is a strongly elliptic singularity. 
If $I = I_Z $ is an elliptic ideal   
$($equivalently $I$ is not a $p_g$-ideal$)$ and $D$ is 
the minimally elliptic cycle on $X$, then $I^2$  is integrally closed 
$($equivalently $I$ is normal$)$ if and only if $- Z D \ge 3$  and 
if $- ZD \le 2$,   
then $I^2 = QI$.    
}}
\vskip 1mm
For any normal surface singularity which is not rational, $p_g$-ideals and 
elliptic ideals exist plentifully. 
But this is no longer true for strongly elliptic ideals. 
\par \vskip 1mm
In Section 4, we show  that there exist  excellent  two-dimensional normal 
local rings  having  no strongly  elliptic ideals, see Examples \ref{no}. 
Finally, Corollary \ref{existence}  gives necessary and sufficient conditions 
for the existence of strongly elliptic ideals in terms of the existence of 
certain cohomological cycles. 
When there exist, we present an effective geometric construction, see 
Example \ref{si}. 

\section{Preliminaries and  normal reduction number}
 
Let $(A,\m)$ be an excellent  two-dimensional normal local domain  
containing an algebraically closed field $k= A/\m$  and let $I$ be an 
integrally closed  $\m$-primary ideal of $A$.  
With the already introduced  notation, then there exists a resolution 
$X \to \ \Spec A$ and a cycle $Z$ such that $I$ is represented on $X$ by 
$Z$. 
When we write $I_Z$ we always assume that $\mathcal{O}_X(-Z) $ is generated by 
global sections, namely $I \mathcal{O}_X=\mathcal{O}_X(-Z)$, and note that  
$I_Z= H^0(X, \mathcal{O}_X(-Z))$.  
Recall that the geometric genus $p_g(A)=  h^1(\mathcal{O}_X)$ is 
independent of the choice of the resolution. 
\par 
Okuma, Watanabe and Yoshida  introduced a natural extension of the 
integrally closed ideals in a two-dimensional rational singularity, 
that is the $p_g$-ideals. With the previous notation
\[
p_g(A) \ge h^1(\mathcal{O}_X(-Z) )
\]
and if the equality holds, then $Z$ is called a $p_g$-cycle and $I=I_Z$ is 
called a $p_g$-ideal. 
In \cite{OWY1, OWY2}, the authors 
characterized the $p_g$-ideals in terms of the normal Hilbert polynomial.  
They proved  that  $A$ is a rational singularity if and only if every integrally 
closed $\m$-primary ideal is a $p_g$-ideal. 
Starting by $p_g(A)$ we define the following chain of integers. 

\begin{defn} \label{qI} 
We define $q(I):= h^1(\mathcal{O}_X(-Z))$ and more in general 
$q(nI):= q(\overline{I^n}) = h^1(\mathcal{O}_X(-nZ))$ for every 
integer $n \ge 1$. 
\end{defn}
\par 
We put $q(0 I)=h^1(\mathcal{O}_X)=p_g(A)$.   
Notice that $q(nI)$  is in general very difficult to compute, but it is independent of the representation \cite[Lemma 3.4]{OWY1}. 
These invariants are strictly related to the normal Hilbert polynomial and their interplay will be very important in our approach.
\par  
The following formula is called a Riemann-Roch formula.  
The result was proved in \cite{kato} in the complex case, 
but it holds in any characteristic, see \cite{WY}. 

\begin{thm}[\textbf{Kato's Riemann-Roch formula  \cite[Theorem 2.2]{WY}}] 
\label{kato}
Let $I=I_Z$ be an $\m$-primary integrally closed ideal represented 
by an anti-nef cycle $Z$ on $X$. 
Then we have 
\[
\ell_A(A/I) + q(I) =-\dfrac{Z^2+K_XZ}{2} + p_g(A),
\]
where $K_X$ denotes the canonical divisor. 
\end{thm}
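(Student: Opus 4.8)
The plan is to obtain the formula as a Riemann--Roch computation on the cycle $Z$ combined with the cohomology of its structure sequence. First I would consider the exact sequence of $\mathcal{O}_X$-modules
\[
0 \to \mathcal{O}_X(-Z) \to \mathcal{O}_X \to \mathcal{O}_Z \to 0,
\]
where $\mathcal{O}_Z := \mathcal{O}_X/\mathcal{O}_X(-Z)$ is the structure sheaf of the effective cycle $Z$, a coherent sheaf supported on the (one-dimensional, proper over $k$) exceptional fibre. Since every fibre of $f\colon X \to \Spec A$ has dimension at most one, $R^2f_* = 0$, so the associated long exact sequence of finite-length $A$-modules reads
\[
0 \to A/I \to H^0(\mathcal{O}_Z) \to H^1(\mathcal{O}_X(-Z)) \to H^1(\mathcal{O}_X) \to H^1(\mathcal{O}_Z) \to 0,
\]
where I have used $H^0(\mathcal{O}_X)=A$ (normality gives $f_*\mathcal{O}_X=\mathcal{O}_{\Spec A}$) and $H^0(\mathcal{O}_X(-Z))=I$, so that the image of $A$ in $H^0(\mathcal{O}_Z)$ is exactly $A/I$. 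Taking the alternating sum of lengths and recalling $q(I)=h^1(\mathcal{O}_X(-Z))$ and $p_g(A)=h^1(\mathcal{O}_X)$ yields
\[
\ell_A(A/I)+q(I)=\chi(\mathcal{O}_Z)+p_g(A),\qquad \chi(\mathcal{O}_Z):=h^0(\mathcal{O}_Z)-h^1(\mathcal{O}_Z).
\]
This reduces the theorem to the single identity $\chi(\mathcal{O}_Z)=-\tfrac{1}{2}(Z^2+K_XZ)$.

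To prove that identity I would induct on the coefficient sum of $Z=\sum_i a_iE_i$. Writing $Z=Z'+E_i$ with $E_i$ an exceptional component and $Z'\ge 0$, the exact sequence $0\to \mathcal{O}_{E_i}(-Z')\to \mathcal{O}_Z\to \mathcal{O}_{Z'}\to 0$ gives $\chi(\mathcal{O}_Z)=\chi(\mathcal{O}_{Z'})+\chi(\mathcal{O}_{E_i}(-Z'))$. On the proper curve $E_i$ over the algebraically closed field $k$, curve Riemann--Roch together with the adjunction formula $2p_a(E_i)-2=E_i^2+K_XE_i$ give
\[
\chi(\mathcal{O}_{E_i}(-Z'))=\chi(\mathcal{O}_{E_i})-Z'E_i=-\tfrac{1}{2}(E_i^2+K_XE_i)-Z'E_i.
\]
Feeding in the inductive value $\chi(\mathcal{O}_{Z'})=-\tfrac{1}{2}((Z')^2+K_XZ')$ and expanding $Z^2=(Z')^2+2Z'E_i+E_i^2$ together with $K_XZ=K_XZ'+K_XE_i$ collapses the right-hand side to $-\tfrac{1}{2}(Z^2+K_XZ)$, as required; the base case $Z=E_i$ is adjunction itself.

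The main obstacle is justifying the Riemann--Roch and adjunction inputs in this relative, non-projective setting, rather than the arithmetic of the induction. The intersection numbers $Z^2$, $K_XZ$ and $Z'E_i$ must be interpreted in Mumford's intersection theory on a resolution of a normal surface singularity, with $K_X$ the relative canonical divisor; once this framework is fixed, the adjunction formula and the curve identity $\deg_{E_i}\mathcal{O}(-Z')=-Z'E_i$ are legitimate precisely because each $E_i$ is proper over $k$. One must also verify $R^2f_*=0$ and the finiteness of every length occurring, both of which follow from the one-dimensionality of the fibres (via the theorem on formal functions) and the fact that $A$ has an isolated singularity. Since all the genuinely geometric input is curve Riemann--Roch over $k$, the argument is insensitive to the characteristic, which is what permits the passage from the complex case of \cite{kato} to the general statement of \cite{WY}.
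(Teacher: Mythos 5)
Your argument is correct. Note that the paper does not prove Theorem \ref{kato} at all: it quotes the formula from \cite{kato} and \cite[Theorem 2.2]{WY}, so there is no internal proof to compare against. Your derivation --- the structure sequence $0 \to \cO_X(-Z) \to \cO_X \to \cO_Z \to 0$, the vanishing of $H^2$ on a space with one-dimensional fibres, the identifications $H^0(\cO_X)=A$ and $H^0(\cO_X(-Z))=I$, and then the computation $\chi(\cO_Z)=-\tfrac{1}{2}(Z^2+K_XZ)$ by induction via decomposition sequences $0\to\cO_{E_i}(-Z')\to\cO_{Z'+E_i}\to\cO_{Z'}\to 0$ --- is the standard route and is essentially the proof in the cited reference; the paper itself uses exactly these ingredients ($\chi(C)=-(K_X+C)C/2$ and the decomposition sequence) in Section 3. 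One small simplification: since $X$ is a smooth surface and $Z$, $Z'$, $E_i$ are compact curves, the intersection numbers $Z^2$, $K_XZ$, $Z'E_i$ are the ordinary ones on $X$; Mumford's rational intersection theory on $\Spec A$ is not needed here. Everything else (properness of the exceptional fibre, finiteness of the lengths, characteristic independence of curve Riemann--Roch and adjunction) is justified as you say.
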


\par
We recall here the properties of the sequence $\{q(nI)\}$, 
The following Propositions \ref{q(nI)} and \ref{eq:q(nI)} 
on $I = I_Z$ follows from the long exact sequence 
attached to the short exact sequence 
\[
(\dagger) \quad 0\to \mathcal{O}_X(-(n-1)Z) \to \mathcal{O}_X(-nZ)^{\oplus 2}\to \mathcal{O}_X(-(n+1)Z)\to 0.
\]
see \cite[Lemma 3.1]{OWY2}.   

\begin{prop} \label{q(nI)}
With the previous notation, the following facts hold:  
\begin{enumerate}
\item $0 \le q(I) \le p_g(A);$ and 
\item $q(kI) \ge q((k+1)I)$ for every integer $k \ge 0$ and 
if $q(nI) = q((n+1)I)$ for some $n\ge 0$, 
then $q(nI) = q(mI)$ for every $m\ge n$. 
Hence $q(nI) = q((n+1)I)$ for every $I$ and $n\ge p_g(A)$. 
We denote it by $q(\infty I)$. 
\end{enumerate}
\end{prop}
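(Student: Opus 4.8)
The plan is to read everything off the long exact cohomology sequence of $(\dagger)$, the only geometric inputs being that $R^2f_*$ vanishes (the fibres of $f$ are at most one-dimensional, so $H^2(\mathcal{O}_X(-mZ))=0$ for every $m$) and that each $mZ$ is again anti-nef, so that $\overline{I^m}=H^0(\mathcal{O}_X(-mZ))$ and the bound $p_g(A)\ge h^1(\mathcal{O}_X(-mZ))$ recorded before Definition~\ref{qI} applies verbatim to $mZ$. Writing $a_m:=q(mI)=\ell_A(H^1(\mathcal{O}_X(-mZ)))$, this already gives $0\le a_m\le p_g(A)$ for all $m\ge0$; in particular part (1) is the case $m=1$, and, crucially, the whole sequence $\{a_m\}$ is bounded, with $a_0=p_g(A)$.

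First I would write out the long exact sequence of $(\dagger)$. Since the $H^2$ of each term vanishes, it ends in
\[
H^0(\mathcal{O}_X(-nZ))^{\oplus 2}\to H^0(\mathcal{O}_X(-(n+1)Z))\xrightarrow{\ \delta\ }H^1(\mathcal{O}_X(-(n-1)Z))\xrightarrow{\ \alpha\ } H^1(\mathcal{O}_X(-nZ))^{\oplus 2}\xrightarrow{\ \beta\ } H^1(\mathcal{O}_X(-(n+1)Z))\to 0.
\]
Reading the sheaf map $\mathcal{O}_X(-nZ)^{\oplus 2}\to\mathcal{O}_X(-(n+1)Z)$ through the two generators $a,b$ of a minimal reduction $Q$ of $I$ (the Koszul map producing $(\dagger)$), the induced map on global sections $H^0(\mathcal{O}_X(-nZ))=\overline{I^n}$ is $(s,t)\mapsto as+bt$, whose image is $Q\overline{I^n}$. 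Hence by exactness $\Image\delta\cong \overline{I^{n+1}}/Q\overline{I^n}$, and counting lengths along the tail $\Image\delta=\Ker\alpha\hookrightarrow H^1(\mathcal{O}_X(-(n-1)Z))\xrightarrow{\alpha} H^1(\mathcal{O}_X(-nZ))^{\oplus2}\xrightarrow{\beta}H^1(\mathcal{O}_X(-(n+1)Z))\to0$ yields the second-difference identity
\[
a_{n+1}-2a_n+a_{n-1}=\ell_A\bigl(\overline{I^{n+1}}/Q\overline{I^n}\bigr)\ge 0\qquad(n\ge 1),
\]
so the differences $d_n:=a_{n+1}-a_n$ are non-decreasing; that is, $\{a_m\}$ is convex.

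The remaining steps are elementary. Since $\{a_m\}$ is convex and bounded above by $p_g(A)$, a single positive difference $d_{n_0}>0$ would force $a_m\ge a_{n_0}+(m-n_0)d_{n_0}\to\infty$, a contradiction; hence $d_n\le0$ for all $n$, which is the monotonicity $q(nI)\ge q((n+1)I)$ of (2) (and re-proves $q(I)\le p_g(A)$ at $n=0$). If $a_n=a_{n+1}$, i.e.\ $d_n=0$, then for $m\ge n$ one has $0=d_n\le d_m\le0$, so $d_m=0$ and $a_m=a_n$: the sequence stabilises from index $n$ on. Finally, because $a_0=p_g(A)$, each non-stabilising step drops $a_m$ by at least $1$, and $a_m\ge0$, stabilisation must occur at some index $\le p_g(A)$; together with the previous point this gives $q(nI)=q((n+1)I)$ for all $n\ge p_g(A)$, which defines $q(\infty I)$.

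The one delicate point is the middle paragraph. A naive reading of the long exact sequence only produces the surjection $H^1(\mathcal{O}_X(-nZ))^{\oplus2}\twoheadrightarrow H^1(\mathcal{O}_X(-(n+1)Z))$, hence the useless estimate $a_{n+1}\le2a_n$; the monotonicity is not visible at a single step and emerges only after combining the convexity coming from the \emph{full} sequence with the uniform bound $a_m\le p_g(A)$. I would therefore take care to (i) verify $H^2(\mathcal{O}_X(-mZ))=0$ so that the sequence is genuinely right-exact, and (ii) pin down the connecting map precisely enough to identify $\Image\delta\cong\overline{I^{n+1}}/Q\overline{I^n}$. Only its non-negativity is needed here, but the same identification ties the second difference to the vanishing of $\overline{I^{n+1}}/Q\overline{I^n}$, hence to $\br(I)$, and will be reused later. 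This is the argument of \cite[Lemma 3.1]{OWY2}.
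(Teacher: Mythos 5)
Your proof is correct and follows essentially the same route as the paper, which derives both this proposition and Proposition \ref{eq:q(nI)} from the long exact cohomology sequence of the Koszul sequence $(\dagger)$ (citing \cite[Lemma 3.1]{OWY2}): the identification $\Image\delta\cong\overline{I^{n+1}}/Q\overline{I^n}$, the resulting convexity $2q(nI)+\ell_A(\overline{I^{n+1}}/Q\overline{I^n})=q((n+1)I)+q((n-1)I)$, and the combination with the uniform bound $0\le q(mI)\le p_g(A)$ are exactly the intended argument. Your explicit care with the two technical points -- $H^2(\mathcal{O}_X(-mZ))=0$ for right-exactness, and pinning down the connecting map via the generators of $Q$ -- is a faithful reconstruction of the cited lemma, not a deviation from it.
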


\par 
We will use the above sequence  for computing the following important 
algebraic numerical  invariants of the normal filtration 
$\{\overline{I^n}\}$.   
Let $\mathbb{Z}_{+}$ denote the set of positive integers. 

\begin{defn}[\textrm{cf. \cite{OWY4}}] \label{nrns}
Let $I \subset A$ be $\m$-primary integrally closed ideal, 
and let $Q$ be a minimal reduction of $I$. 
Define:
\begin{eqnarray*}
\nr(I) &:=& \min\{r \in \mathbb{Z}_{+} \,|\, \overline{I^{r+1}}=Q\overline{I^r}\}, \\
\br(I) &:=& \min\{ r \in \mathbb{Z}_{+} \;|\; \overline{I^{n+1}} = Q \overline{I^n} \; \mbox{\rm for all } n \ge r\}.
\end{eqnarray*}
We call $\br(I)$ the {\it{normal reduction number}}  and 
$\nr(I)$ the \textit{relative normal reduction number}. 
\end{defn}

\par 
The normal reduction number   exists (see \cite{NR} and \cite{R}) and it 
has been studied by many authors in the context of the Hilbert function 
and of the Hilbert polynomial (e.g. see \cite{Hun}, \cite{It2}, \cite{CPR1}, 
\cite{CPR2}, \cite{MSV}). 
The main difficulty  of the normal filtration  with respect to the $I$-adic 
filtration, is that the Rees algebra of the normal filtration is not generated 
by the part of degree one because $I \overline{I^n} \neq  
\overline{I^{n+1}}$.   By the definition, we deduce that $\nr(I) \le \br(I)$ 
and we will see that in general they do not coincide. 
Note that the definitions of $\nr(I)$ and of $\br(I)$ are independent on the 
choice of  a minimal reduction $Q$ of $I$ 
(see e.g. \cite[Theorem 4.5]{Hun}).  
It is also a consequence of the following result  in \cite{OWY4},  \S2.

\begin{prop}\label{eq:q(nI)}
The following statements hold. 
\begin{enumerate}
\item 
For any integer $n \ge 1$, we have 
\[
2 \cdot q(nI) + \ell_A(\overline{I^{n+1}}/Q\overline{I^n})
=q((n+1)I)+q((n-1)I).  
\]
 \item We have
\begin{eqnarray*}
\nr(I) &=& \min\{n \in \mathbb{Z}_{+} \,| \, 
   q((n-1)I)- q(nI) = q(nI) - q((n+1)I)  \},\\
\br(I) &=& \min\{n \in \mathbb{Z}_{+} \,|\, q((n-1)I)=q(nI)  \}.
\end{eqnarray*}
\end{enumerate}
\end{prop}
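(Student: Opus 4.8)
The plan is to read off both statements directly from the long exact cohomology sequence attached to the short exact sequence $(\dagger)$. First I would apply cohomology to $(\dagger)$. Since $f\colon X \to \Spec A$ is a resolution of a two-dimensional singularity, its fibres have dimension at most one, so $R^if_*\mathcal{F} = 0$ for $i \ge 2$ and hence $H^2(X,\mathcal{F}) = 0$ for every coherent sheaf $\mathcal{F}$; thus the long exact sequence terminates after the $H^1$ terms. Using $H^0(\mathcal{O}_X(-mZ)) = \overline{I^m}$ and recording that the two maps $\mathcal{O}_X(-nZ) \to \mathcal{O}_X(-(n+1)Z)$ appearing in $(\dagger)$ are multiplication by a fixed pair of generators of a minimal reduction $Q = (a,b)$, the image of the induced map $(\overline{I^n})^{\oplus 2} \to \overline{I^{n+1}}$ on global sections is exactly $a\overline{I^n} + b\overline{I^n} = Q\overline{I^n}$. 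Consequently the cokernel of the $H^0$-level map is $\overline{I^{n+1}}/Q\overline{I^n}$, of finite length.

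Next I would extract the finite-length tail. The cokernel $\overline{I^{n+1}}/Q\overline{I^n}$ injects (via the connecting homomorphism) into $H^1(\mathcal{O}_X(-(n-1)Z))$, so the long exact sequence yields an exact sequence of finite-length modules
\[
0 \to \overline{I^{n+1}}/Q\overline{I^n} \to H^1(\mathcal{O}_X(-(n-1)Z)) \to H^1(\mathcal{O}_X(-nZ))^{\oplus 2} \to H^1(\mathcal{O}_X(-(n+1)Z)) \to 0.
\]
Taking the alternating sum of lengths (all four terms are finite by the vanishing of $H^2$ and the finiteness of each $q(mI)$) gives
\[
\ell_A(\overline{I^{n+1}}/Q\overline{I^n}) - q((n-1)I) + 2q(nI) - q((n+1)I) = 0,
\]
which rearranges to the identity in (1).

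Finally I would deduce (2) from (1). Writing $\delta_n := q((n-1)I) - q(nI) \ge 0$, statement (1) reads $\ell_A(\overline{I^{n+1}}/Q\overline{I^n}) = \delta_n - \delta_{n+1}$, so $\overline{I^{n+1}} = Q\overline{I^n}$ holds if and only if $\delta_n = \delta_{n+1}$, i.e. $q((n-1)I) - q(nI) = q(nI) - q((n+1)I)$; taking the least such $n$ gives the formula for $\nr(I)$. The formula for $\br(I)$ is the one step I expect to require care: I must show that the condition ``$\overline{I^{m+1}} = Q\overline{I^m}$ for all $m \ge n$'' is equivalent to the single equality $q((n-1)I) = q(nI)$. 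If $q((n-1)I) = q(nI)$, then by Proposition \ref{q(nI)} the sequence $\{q(mI)\}$ is constant for $m \ge n-1$, so every $\delta_m$ with $m \ge n$ vanishes and hence so does each $\ell_A(\overline{I^{m+1}}/Q\overline{I^m})$. Conversely, if $\overline{I^{m+1}} = Q\overline{I^m}$ for all $m \ge n$, then $\delta_m = \delta_{m+1}$ for all $m \ge n$, so $\delta_m$ is eventually constant; but $\{q(mI)\}$ stabilises (again by Proposition \ref{q(nI)}), forcing $\delta_m \to 0$ and therefore $\delta_m = 0$ for all $m \ge n$, in particular $q((n-1)I) = q(nI)$. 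Taking the least such $n$ yields the formula for $\br(I)$. The subtlety in this last argument is precisely that equality of \emph{successive differences} does not by itself force them to vanish; it is the eventual stabilisation of $\{q(mI)\}$ that collapses the arithmetic progression to the constant one.
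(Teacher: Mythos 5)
Your proof is correct and is essentially the argument the paper intends: the paper itself only sketches this result by pointing to the long exact cohomology sequence of $(\dagger)$ (citing [OWY2, Lemma 3.1] and [OWY4, \S 2]), and your write-up fills in exactly that computation — the four-term finite-length tail, the alternating sum giving (1), and the deduction of (2) via the differences $\delta_n$ together with the stabilisation from Proposition \ref{q(nI)}. Your closing observation that the $\br(I)$ formula needs the eventual stabilisation of $\{q(mI)\}$, not just equality of successive differences, is precisely the right point of care and matches how the paper uses these formulas later (e.g.\ in the proof of Theorem \ref{nr(I)}).
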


\par 
From the propositions above we have that $\br(I) \le p_g(A) +1$.
In \cite[Theorem 2.9]{OWY4}  the authors showed that 
$p_g(A) \ge \binom{\nr(I)}{2}$.  

\par 
The second author \cite[Corollary 1.5]{Ro} proved the following  upper 
bound on the  reduction number $r(I)$ for every $\m$-primary ideal $I$ 
(here $r(I)$ denotes the reduction number for the $I$-adic filtration) in a 
two-dimensional Cohen-Macaulay local ring $A$ in terms of the Hilbert 
coefficients: 
\[
r(I) \le e_1(I) -e_0(I) + \ell_A(A/I)+1. 
\] 
The bound gives, as a consequence,  several interesting results, 
in particular a positive answer to a longstanding conjecture stated 
by J. Sally in the case of local Cohen-Macaulay rings of 
almost minimal multiplicity, see  \cite{Ro} and \cite{S1}.  
Later,  the inequality was extended by Rossi and Valla, see \cite{RV},  
Theorem $4.3$  to special multiplicative $I$-filtrations. 
The result does not include the normal filtration. 
It is natural to ask if  the same bound also holds for $\br(I)$.   
The answer is negative as we will show later, but we prove that the 
analogue  upper bound holds true for $\nr(I)$.  
We need  some preliminary results. 

\par \vspace{1mm}
From Riemann-Roch formula (Theorem $\ref{kato}$), we get 
\[
\ell_A(A/\overline{I^{n+1}})+q((n+1)I)
=-\dfrac{(n+1)^2Z^2+(n+1)ZK_X}{2} + p_g(A). 
\]
Using this, 
we can express $\bar{e}_0(I), \bar{e}_1(I), \bar{e}_2(I)$ as follows. 
%
\begin{prop}[\textrm{\cite[Theorem 3.2]{OWY2}}]   \label{p:normalHP}
Assume that $I = I_Z$ is represented by a cycle $Z > 0$
 on a resolution $X$ of  $\Spec(A)$. 
Let $\bar{P}_I(n)$ be the normal Hilbert-polynomial of $I$. 
Then 
\begin{enumerate}
\item $\overline{P}_I(n)=\ell_A(A/\overline{I^{n+1}})$ for all 
$n \ge p_g(A)-1$. 
\item $\bar e_0(I)=e_0(I)$.
\item $\bar e_1(I)-e_0(I) + \ell_A(A/I) =p_g(A) - q(I)$. 
\item $\bar e_2(I)= p_g(A)-q(nI)=p_g(A)-q(\infty I)$ for all $n \ge p_g(A)$. 
\end{enumerate}
Moreover, we have 
\[
\bar e_0(I)= -Z^2,\qquad \bar e_1(I)= \dfrac{-Z^2+ZK_X}{2}. 
\]
\end{prop}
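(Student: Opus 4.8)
The plan is to derive all four parts, together with the closed formulas for $\bar e_0$ and $\bar e_1$, from a single source: Kato's Riemann--Roch formula (Theorem \ref{kato}) applied not only to $Z$ but to every positive multiple $(n+1)Z$, followed by reading off the Hilbert coefficients by matching a polynomial against its binomial expansion. First I would record the fundamental identity. Since $\overline{I^{n+1}} = I_{(n+1)Z}$ is represented by the anti-nef cycle $(n+1)Z$ and $q((n+1)I) = h^1(\mathcal{O}_X(-(n+1)Z))$, Theorem \ref{kato} applied to $(n+1)Z$ produces exactly the identity displayed just before the statement,
\[
\ell_A(A/\overline{I^{n+1}}) = -\frac{(n+1)^2 Z^2 + (n+1)ZK_X}{2} + p_g(A) - q((n+1)I).
\]
By Proposition \ref{q(nI)}(2) the sequence $q((n+1)I)$ is eventually constant equal to $q(\infty I)$, and it has already stabilized once $n+1 \ge p_g(A)$, i.e. for all $n \ge p_g(A)-1$. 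For such $n$ the right-hand side is a genuine polynomial in $n$, and since it agrees with $\ell_A(A/\overline{I^{n+1}})$ there, it must coincide with $\bar P_I(n)$; this proves part (1) and simultaneously pins down the stabilization index.

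Next I would expand that polynomial and compare it term by term with the binomial form $\bar e_0\binom{n+2}{2} - \bar e_1\binom{n+1}{1} + \bar e_2$. Writing $\binom{n+2}{2} = \tfrac{1}{2}(n^2 + 3n + 2)$ and $\binom{n+1}{1} = n+1$, matching the coefficient of $n^2$ forces $\bar e_0 = -Z^2$; with that in hand, matching the coefficient of $n$ forces $\bar e_1 = \tfrac{-Z^2 + ZK_X}{2}$; and comparing the constant terms then yields $\bar e_2 = p_g(A) - q(\infty I)$. Because $q(nI) = q(\infty I)$ for all $n \ge p_g(A)$ by Proposition \ref{q(nI)}(2), this is precisely the two equalities asserted in part (4), and it simultaneously establishes the final displayed formulas for $\bar e_0$ and $\bar e_1$.

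For part (3) I would combine the value $\bar e_1 = \tfrac{-Z^2 + ZK_X}{2}$ just obtained, together with $e_0(I) = -Z^2$, with Kato's formula applied to $Z$ itself. Substituting $\ell_A(A/I) = -\tfrac{Z^2 + K_XZ}{2} + p_g(A) - q(I)$ into the expression $\bar e_1 - e_0 + \ell_A(A/I)$ and cancelling the $\tfrac{Z^2 + ZK_X}{2}$ terms collapses everything to $p_g(A) - q(I)$ in one line. Finally, part (2), the equality $\bar e_0(I) = e_0(I)$, follows from Rees's finiteness theorem: since $A$ is excellent, hence analytically unramified, the normal Rees algebra $\bigoplus_n \overline{I^n}$ is a finite module over the ordinary Rees algebra $\bigoplus_n I^n$, so the normal filtration and the $I$-adic filtration have the same multiplicity; combined with the intersection-theoretic identity $e_0(I) = -Z^2$, this gives $\bar e_0 = e_0 = -Z^2$, consistent with the leading coefficient found above. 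The computations are elementary once the Riemann--Roch identity is available, so the only genuine (and modest) obstacle is careful bookkeeping: converting the degree-two polynomial faithfully into binomial coefficients, and correctly citing Rees's finiteness to secure the invariance of the leading coefficient under passage to the normal filtration.
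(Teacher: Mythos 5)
Your proposal is correct and follows essentially the same route as the paper: the paper likewise obtains Proposition \ref{p:normalHP} by applying Kato's Riemann--Roch formula (Theorem \ref{kato}) to the cycles $(n+1)Z$, invoking the stabilization $q((n+1)I)=q(\infty I)$ for $n+1\ge p_g(A)$ from Proposition \ref{q(nI)}, and reading off $\bar e_0(I)$, $\bar e_1(I)$, $\bar e_2(I)$ by comparing the resulting polynomial with the binomial expansion, delegating the details to \cite[Theorem 3.2]{OWY2}. Your explicit coefficient matching, and the appeal to Rees's finiteness theorem to get $\bar e_0(I)=e_0(I)$, are precisely the standard bookkeeping behind that citation.
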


\begin{thm} \label{nr(I)}
Let $(A,\m)$ be an excellent two-dimensional 
normal local domain containing an algebraically closed field $k= A/\m$. 
Let $I \subset A$ be an $\m$-primary integrally closed ideal. 
Then 
\[
\nr(I) \le \bar{e}_1(I) -  \bar{e}_0(I) +\ell_A(A/I) +1. 
\]
If we put $r=\nr(I)$, 
equality holds if and only if 
the following conditions  hold true $:$
\begin{enumerate}
\item $\ell_A(\overline{I^{n+1}}/Q\overline{I^n}) =1$
for $n= 1,\ldots, r-1$ if $r>1,$ 
\item $q((r-1)I)=q(\infty I)$.
\end{enumerate}
When this is the case, $\nr(I)=\br(I)$, $q(I)=p_g(A)-\br(I)+1$, 
and $\bar{e}_2(I)=p_g(A)- q(\infty I) = r(r-1)/2$. 
\end{thm}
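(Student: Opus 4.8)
The plan is to derive the inequality from the basic relation in Proposition~\ref{eq:q(nI)}(1) by summing it telescopically, and then read off the equality conditions from when all the intervening inequalities are tight. Set $r = \nr(I)$ and $Q$ a minimal reduction of $I$. The key identity is
\[
2\,q(nI) + \ell_A(\overline{I^{n+1}}/Q\overline{I^n}) = q((n+1)I) + q((n-1)I),
\]
which I rewrite as
\[
\bigl(q((n-1)I) - q(nI)\bigr) - \bigl(q(nI) - q((n+1)I)\bigr) = \ell_A(\overline{I^{n+1}}/Q\overline{I^n}) \ge 0.
\]
Thus the first difference $\delta_n := q((n-1)I) - q(nI)$ is non-increasing in $n$, and is non-negative by Proposition~\ref{q(nI)}(2). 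By the characterization of $\nr(I)$ in Proposition~\ref{eq:q(nI)}(2), the definition of $r$ says that $\delta_r = \delta_{r+1}$ is the first place the second difference vanishes, so $\delta_1 \ge \delta_2 \ge \cdots \ge \delta_r$ with each $\ell_A(\overline{I^{n+1}}/Q\overline{I^n}) = \delta_n - \delta_{n+1} \ge 1$ for $n = 1, \ldots, r-1$.

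\textbf{The main step} is to convert $\bar e_1(I) - \bar e_0(I) + \ell_A(A/I)$ into $p_g(A) - q(I)$ using Proposition~\ref{p:normalHP}(3), and to express $p_g(A) - q(I)$ as a telescoping sum of the first differences. Indeed,
\[
p_g(A) - q(I) = q(0I) - q(rI) + \bigl(q(rI) - q(\infty I)\bigr) = \sum_{n=1}^{r} \delta_n \;+\; \bigl(q(rI) - q(\infty I)\bigr).
\]
Since the $\delta_n$ are positive integers that are non-increasing and $\delta_r \ge 1$, I get $\sum_{n=1}^r \delta_n \ge r$ (each term is at least $1$), and $q(rI) - q(\infty I) \ge 0$; hence $p_g(A) - q(I) \ge r$, i.e. $\nr(I) \le \bar e_1(I) - \bar e_0(I) + \ell_A(A/I)$. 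This is even one less than claimed, so I would recheck the indexing: the intended bound has a $+1$, which suggests the correct reading is $\bar e_1(I) - \bar e_0(I) + \ell_A(A/I) = p_g(A) - q(I) \ge r - 1$ under the tightness hypotheses, so that equality in the stated theorem means $\sum_{n=1}^{r-1}\delta_n = r-1$ together with $q(rI) = q(\infty I)$ — consistent with conditions (1) and (2).

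\textbf{The delicate point} is pinning down exactly when equality holds. Equality forces two things simultaneously: every $\ell_A(\overline{I^{n+1}}/Q\overline{I^n}) = 1$ for $n = 1, \ldots, r-1$ (each telescoped term contributes its minimum), which is condition~(1); and $q((r-1)I) = q(\infty I)$, i.e. the sequence $\{q(nI)\}$ has already stabilized by step $r-1$, which is condition~(2). From condition~(2) and the definition of $\br(I)$ in Proposition~\ref{eq:q(nI)}(2) — namely the first $n$ with $q((n-1)I) = q(nI)$ — I conclude $\br(I) = r$, so $\nr(I) = \br(I)$. The identities $q(I) = p_g(A) - \br(I) + 1$ and $\bar e_2(I) = p_g(A) - q(\infty I) = r(r-1)/2$ then follow: the first from rearranging $p_g(A) - q(I) = r-1$, and the last from $\bar e_2(I) = p_g(A) - q(\infty I)$ (Proposition~\ref{p:normalHP}(4)) combined with the fact that when all the second differences are $1$ the first differences run $\delta_1 = r-1, \delta_2 = r-2, \ldots, \delta_{r-1} = 1$, whose sum is $\binom{r}{2} = r(r-1)/2$. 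The one thing I would verify carefully is the off-by-one in the summation range, since the theorem's condition~(1) runs only to $r-1$ while the reduction number $\nr(I)$ is defined via $\overline{I^{r+1}} = Q\overline{I^r}$; reconciling these indices is where an error is most likely to creep in.
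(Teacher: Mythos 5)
Your overall strategy (reducing to $p_g(A)-q(I)$ via Proposition \ref{p:normalHP}(3) and exploiting the monotonicity of the first differences coming from Proposition \ref{eq:q(nI)}(1)) matches the paper's, but your main step contains a genuine error, of which your own off-by-one worry is a symptom. With $\delta_n:=q((n-1)I)-q(nI)$, the identity
\[
p_g(A) - q(I) \;=\; \sum_{n=1}^{r}\delta_n + \bigl(q(rI)-q(\infty I)\bigr)
\]
is false: the left-hand side is $q(0I)-q(1I)=\delta_1$, a \emph{single} first difference, whereas your right-hand side telescopes to $q(0I)-q(\infty I)=p_g(A)-q(\infty I)=\bar e_2(I)$, a different invariant. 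Likewise the claim $\delta_r\ge 1$ fails in general (for a $p_g$-ideal, $r=1$ and $\delta_1=0$). Consequently your conclusion $p_g(A)-q(I)\ge r$, i.e.\ $\nr(I)\le \bar e_1(I)-\bar e_0(I)+\ell_A(A/I)$, is not merely ``one less than claimed'' but actually wrong: the theorem asserts that equality $\nr(I)=\bar e_1(I)-\bar e_0(I)+\ell_A(A/I)+1$ does occur, which would contradict your stronger bound. This cannot be repaired by appealing to ``the tightness hypotheses,'' since the inequality must hold unconditionally.

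The correct argument bounds the single difference $\delta_1$, not the sum. For $1\le n\le r-1$ one has $\overline{I^{n+1}}\ne Q\overline{I^n}$ by minimality of $r=\nr(I)$, hence $\delta_n-\delta_{n+1}=\ell_A(\overline{I^{n+1}}/Q\overline{I^n})\ge 1$; chaining these gives $\delta_1\ge \delta_r+(r-1)\ge r-1$, i.e.\ $\nr(I)\le p_g(A)-q(I)+1$, which is the asserted bound. Equality then forces $\delta_r=0$ and $\delta_n=r-n$ for $1\le n\le r$, so each $\ell_A(\overline{I^{n+1}}/Q\overline{I^n})=1$ (condition (1)); moreover $q((r-1)I)=q(rI)$ together with Proposition \ref{q(nI)}(2) gives $q((r-1)I)=q(\infty I)$ (condition (2)) and $\br(I)=r$ by Proposition \ref{eq:q(nI)}(2). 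Your final computation $\bar e_2(I)=\sum_{n=1}^{r-1}\delta_n=r(r-1)/2$ is fine once the preceding step is fixed, since that sum genuinely telescopes to $p_g(A)-q((r-1)I)=p_g(A)-q(\infty I)$.
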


\begin{proof} 
By virtue of Proposition \ref{p:normalHP}, 
it is enough to show 
\[
\nr(I) \le p_g(A)-q(I)+1. 
\]
If we put $\Delta q(n) : =  q(nI) - q((n+1)I)$ for every integer $n \ge 0$, 
then $\Delta q(n) $ is non-negative and decreasing 
since $\ell_A(\overline{I^{n+1}}/Q\overline{I^n})
=\Delta q(n-1) -\Delta q(n)$.
We have 
\[
\nr(I)=\min\{n \in \mathbb{Z}_{+} \,|\, \Delta q(n-1) =\Delta q(n) \}, 
\quad
\br(I)=\min\{n \in \mathbb{Z}_{+} \,|\, \Delta q(n-1) =0 \}. 
\]
Put $a=p_g(A)-q(I)$. 
Then $\Delta q(0)=a\ge \nr(I)-1$ and $\nr(I)=a+1$ if and only if 
\[
\Delta q(0) =a > \Delta q(1) =a-1 > \cdots > \Delta q(a-1)=1 > 
\Delta q(a)=0 = \Delta q(a+1). 
\]
\par 
Now assume $\nr(I)=a+1$. Then $a=r-1$ and for every $n$ with 
$1 \le n \le a=r-1$, we have 
\[
\ell_A(\overline{I^{n+1}}/Q\overline{I^n})=\Delta q(n-1)-\Delta q(n)=
(a-n+1)-(a-n)=1. 
\]
Moreover, for every $n \ge a+1$, we have 
\[
\ell_A(\overline{I^{n+1}}/Q\overline{I^n})=\Delta q(n-1)-\Delta q(n)=0
\]
and thus $\overline{I^{n+1}}=Q\overline{I^n}$. 
Hence $\br(I)=a+1=\nr(I)$. 
Furthermore, 
\[
\bar{e}_2(I)=p_g(A)-q(\infty I)=q(0 I)-q((r-1)I) = \sum_{i=0}^{r-2} \Delta q(i)= 
\dfrac{r(r-1)}{2}. 
\] 
One can prove the converse similarly. \qed
\end{proof}

\par \vspace{1mm}
Note  that, if the equality holds in the previous result, then the normal 
filtration $\{\overline{I^n}\} $ has almost minimal multiplicity 
following the definition given in \cite[2.1]{RV}. 
In the following example we show that  
Theorem \ref{nr(I)}  does \textit{not} hold if we replace $\nr(I)$ 
by $\bar{r}(I)$.    
The example shows  that for all $g \ge 2$, 
there exist an excellent  two-dimensional normal local ring $A$ 
and an integrally closed $\m$-primary ideal $I$ 
such that $\nr(I) =1, \bar{r}(I)={{g+1}}, q(I) = g-1$ and 
$\ell_A(A/I) = g$. 

\par
The following ideal $I$ satisfies $\bar{e}_1(I)=\bar{e}_0(I)-\ell_A(A/I)+1$, 
but $\br(I)\not \le 2$.  

\begin{ex}[\textrm{\cite[Example 3.10]{OWY5}}] \label{e}
Let $g \ge 2$ be an integer, and let $K$ be 
 a field of $\chara K=0$ or $\chara K=p$, where 
 $p$ does not divide $2g+2$. 
Then $R=K[X,Y,Z]/(X^2-Y^{2g+2}-Z^{2g+2})$ is 
a graded normal $K$-algebra with $\deg X=g+1$, $\deg Y=\deg Z=1$. 
Let $A=R^{(g)}$ be the $g^{th}$ Veronese subring of $R$:
\[
A=K[y^{g},y^{g-1}z,y^{g-2}z^{2},\ldots, z^{g}, xy^{g-1},xy^{g-2}z,
\ldots,xz^{g-1}],
\]
where $x, y, z$ denotes, respectively, the image of $X, Y, Z$ in $R$.   
Then $A$ is a graded normal domain with $A_k=R_{kg}$ 
for every integer $k \ge 0$. 
Let $I=(y^g,y^{g-1}z) +A_{\ge 2}$ and $Q=(y^g-z^{2g}, y^{g-1}z)$.  
Then the following statements hold: 
\begin{enumerate}
\item $p_g(A)=g$. 
\item $\nr(I)=1$ and $\br(I)=g+1$. Indeed, 
\begin{enumerate}
\item $\overline{I}=I$ and $\overline{I^{n}}=I^n=QI^{n-1}$ 
for every $n=2,\ldots, g$. 
\item $\ell_A(\overline{I^{g+1}}/Q\overline{I^g})=1$
($\overline{I^{g+1}}= I^{g+1} +(xy^{g^2-1})$).
\item $\overline{I^{n+1}}=Q \overline{I^{n}}$ for every $n \ge g+1$.  
\end{enumerate}
\item $\bar{e}_0(I)=4g-2$, $\bar{e}_1(I)=3g-1$, $\bar{e}_2(I)=g$ and 
$\ell_A(A/I)=g$. 
\item $q(nI)=g-n$ for every $n=0,1,\ldots,g$; $q(gI)=q(\infty I)=0$. 
\end{enumerate}
\end{ex}

The first statement follows from that $a(A)=0$ and $g=g(\Proj (A))$.
For the convenience of the readers, 
we give a sketch of the proof  in the case of $g=2$ 
(see \cite[the proof of Example 3.10]{OWY5}). 
Let $A=K[y^2,yz,z^2,xy,xz]=R^{(2)}$ with $\deg x=3$ and 
$\deg y= \deg z=1$, and 
$I=(y^2,yz, z^4, xy,xz) \supset Q=(y^2-z^4,yz)$. 
Then one can easily see that $e_0(I)=\ell_A(A/Q)=4g-2=6$, 
$\ell_A(A/I)=p_g(A)=g=2$ and $I^2=QI$, $\overline{I}=I$. 
In particular, $\nr(I)=1$. 

\begin{clm}
$f_0 \in K[y,z]_{2n} \cap \overline{I^n} 
\Longrightarrow f_0 \in I^n$ for each $n \ge 1$. 
\end{clm}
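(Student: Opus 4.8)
The plan is to reduce the Claim to a purely combinatorial statement about the exponents occurring in $f_0$, and then to force the required inequality on those exponents by testing membership in $\overline{I^n}$ against a single well-chosen valuation. First I would pin down the $A$-degree-$n$ piece of $I^n$. Since the generators $y^2,yz$ of $I$ have $A$-degree $1$ while $z^4,xy,xz$ have $A$-degree $2$, any product of $n$ generators that uses an $x$-generator (or $z^4$) has $A$-degree strictly larger than $n$. Hence the $A$-degree-$n$ part of $I^n$ is exactly $\langle y^2,yz\rangle^n$, which is the $K$-span of the monomials $y^az^b$ with $a+b=2n$ and $a\ge n$ (indeed $y^{2n-b}z^b=(y^2)^{n-b}(yz)^b$). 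Because $f_0\in K[y,z]_{2n}$ is homogeneous of $A$-degree $n$ and $x$-free, the target statement $f_0\in I^n$ is therefore equivalent to the assertion that every monomial $y^az^b$ appearing in $f_0$ satisfies $a\ge n$.

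Next I would produce a valuation detecting this. On $K[y,z]$ take the weighted-order valuation $v$ with $v(y)=2$ and $v(z)=1$, and extend it to $R=K[y,z][x]/(x^2-y^6-z^6)$ by setting $v(x)=3$; this value is forced and consistent, since $v(y^6+z^6)=6$ (the term $z^6$ dominates) must equal $v(x^2)=2v(x)$, and it makes $v$ nonnegative on $R\supseteq A$. Evaluating on the generators gives $v(I)=\min\{4,3,4,5,4\}=3$, attained by $yz$. The only general fact I need is the elementary half of the valuative criterion: for any valuation nonnegative on $A$ and any ideal $J$ one has $v(f)\ge v(J)$ for all $f\in\overline J$, proved by the standard estimate on a monic integral-dependence relation. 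Applying this with $J=I^n$ yields $v(f_0)\ge n\,v(I)=3n$.

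Finally I would read off the conclusion. For a degree-$2n$ monomial, $v(y^az^b)=2a+b=a+2n$ by $a+b=2n$, so distinct values of $a$ give distinct $v$-weights and no cancellation can occur; thus $v(f_0)=a_{\min}+2n$, where $a_{\min}$ is the least $y$-exponent occurring in $f_0$. The inequality $a_{\min}+2n\ge 3n$ then forces $a_{\min}\ge n$, so every monomial of $f_0$ has $a\ge n$ and $f_0\in\langle y^2,yz\rangle^n\subseteq I^n$, as required.

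I expect the main obstacle to be the choice and justification of the valuation rather than any of the bookkeeping. The naive tests (the $z$-adic or $y$-adic order, or the symmetric weighting $v(y)=v(z)$) either give $v(I)=0$ or produce a strictly weaker bound than $a\ge n$; one has to notice that the weight ratio $v(y):v(z)$ must lie in the window $(1,3]$ in order for $v(I)$ to be computed by $yz$ and for the resulting monomial inequality to collapse \emph{exactly} to $a\ge n$. Verifying that $v(x)=3$ genuinely extends the weighted valuation to the double cover $R$ — so that the valuative inequality is legitimately applicable to the $x$-involving generators $xy,xz$ and to the integral-dependence relation satisfied by $f_0$ — is the other point that needs care.
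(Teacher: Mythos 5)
Your argument is correct, but it is not the route the paper takes. The paper disposes of this claim in one line by appealing to the normality of the monomial ideal $I_0=(y^2,yz,z^4)$ in $K[y,z]$ (an instance of Zariski's theorem that products of complete ideals in a two-dimensional regular ring are complete), implicitly first passing from $f_0\in\overline{I^n}\cap K[y,z]$ to $f_0\in\overline{I_0^{\,n}}$. You instead make the whole thing self-contained: the reduction of $f_0\in I^n$ to the monomial condition $a\ge n$ via the grading is correct (the degree-$n$ piece of $I^n$ is indeed spanned by $(y^2)^i(yz)^j$), and a single supporting valuation with weights $v(y)=2$, $v(z)=1$ — which is exactly a normal vector in the cone at the vertex $(1,1)$ of the Newton polyhedron underlying the paper's normality citation — does force $a\ge n$ on the degree-$2n$ slice. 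What your approach buys is that you never need to justify the (glossed-over) passage from integral closure in $A$ to integral closure in $K[y,z]$: you work with one valuation on $\mathrm{Frac}(R)$ throughout. The one point you flag as delicate is indeed the only one needing a sentence: an extension $w$ of the monomial valuation to the quadratic extension $\mathrm{Frac}(R)$ exists by general valuation theory, necessarily satisfies $w(x)=\tfrac12 v(y^6+z^6)=3$, and satisfies $w(f_0+xf_1)\ge\min(v(f_0),3+v(f_1))\ge 0$ on $R$. Be aware that the naive formula $\min(v(f_0),3+v(f_1))$ is \emph{not} itself a valuation (it fails multiplicativity on $(x-z^3)(x+z^3)=y^6$), but this is harmless for you: you only ever evaluate $w$ on monomials in $x,y,z$ (where multiplicativity of $w$ gives exact values, e.g. $w(xy)=5$) and on elements of $K[y,z]$, where $w$ agrees with $v$ and the distinct weights $2a+b=a+2n$ rule out cancellation. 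So the proof stands as a legitimate, more explicit alternative to the paper's citation.
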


\par 
The normality of $I_0=(y^2,yz,z^4)K[y,z] \subset K[y,z]$ implies the above 
claim.

\begin{clm}
$0 \ne f_1 \in K[y,z]_{2n-3}$, 
$xf_1 \in \overline{I^n} \Longrightarrow n \ge 3$. 
\end{clm}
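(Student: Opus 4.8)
The plan is to reduce the claim to the preceding Claim 1 by squaring, thereby moving from the ``$x$-part'' of $R$ into the polynomial subring $K[y,z]$. Since $xf_1\in\overline{I^n}$, the standard containment $\overline{I^n}\cdot\overline{I^n}\subseteq\overline{I^{2n}}$ gives $(xf_1)^2\in\overline{I^{2n}}$. The whole point of squaring is that, using the relation $x^2=y^6+z^6$ in $R$, we get $(xf_1)^2=x^2f_1^2=(y^6+z^6)f_1^2$, which lies in $K[y,z]$. As $f_1\in K[y,z]_{2n-3}$, this element is homogeneous of degree $4n$, i.e. $(y^6+z^6)f_1^2\in K[y,z]_{4n}\cap\overline{I^{2n}}$. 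Hence Claim 1, applied with $2n$ in place of $n$, yields $(y^6+z^6)f_1^2\in I^{2n}$.

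Next I would pin down the lowest graded piece of $I^{2n}$. Every generator of $I$ has $R$-degree at least $2$, with equality only for $y^2$ and $yz$; so a product of $2n$ generators has degree at least $4n$, with equality exactly when every factor is $y^2$ or $yz$. Since $A_0=K$, the degree-$4n$ component of the homogeneous ideal $I^{2n}$ is therefore spanned by such products, that is, by the monomials $y^\alpha z^\beta$ with $\alpha+\beta=4n$ and $\alpha\ge 2n$. Because $(y^6+z^6)f_1^2$ is homogeneous of degree $4n$ and lies in $I^{2n}$, every monomial occurring in it must have $y$-exponent at least $2n$.

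The final step is a leading-term comparison in the $y$-degree. Let $i_0$ be the smallest $y$-exponent occurring in $f_1$; then $f_1^2$ has smallest $y$-exponent $2i_0$, with nonzero coefficient (a square in the field $K$). In $(y^6+z^6)f_1^2$, the summand $z^6f_1^2$ contributes the monomial $y^{2i_0}z^{4n-2i_0}$, while $y^6f_1^2$ contributes only $y$-exponents $\ge 2i_0+6$; hence there is no cancellation and $y^{2i_0}z^{4n-2i_0}$ survives with nonzero coefficient. The conclusion of the previous paragraph then forces $2i_0\ge 2n$, i.e. $i_0\ge n$. On the other hand $\deg f_1=2n-3$ gives $i_0\le 2n-3$, so $n\le i_0\le 2n-3$, that is $n\ge 3$, as claimed.

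I expect the only delicate points to be the identification of the degree-$4n$ part of $I^{2n}$ and the no-cancellation argument isolating the lowest $y$-degree monomial; both are elementary once the grading bookkeeping is set up. The genuine idea is simply to square $xf_1$ into $K[y,z]$, so that Claim 1 (and hence the normality of $I_0=(y^2,yz,z^4)K[y,z]$) becomes available and the $y$-degree of $f_1$ can be controlled.
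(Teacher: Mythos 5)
Your proof is correct and follows essentially the same route as the paper: square $xf_1$ to land $(y^6+z^6)f_1^2$ in $K[y,z]_{4n}\cap\overline{I^{2n}}$, invoke Claim 1 to get it into $I^{2n}$, identify the degree-$4n$ piece of $I^{2n}$ with that of $(y^2,yz)^{2n}$, and conclude by a leading-term comparison. Your bound ``lowest $y$-exponent $\ge 2n$'' is just the complementary form of the paper's ``highest $z$-exponent $\le 2n$'' for a binary form of degree $4n$, so the two arguments coincide.
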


\par 
By assumption and Claim 1, 
we have $(y^6+z^6)f_1^2=(xf_1)^2 \in \overline{I^{2n}} 
\cap K[y,z]_{2 \cdot 2n} \subset I^{2n}$. 
The degree (in $y$ and $z$) of any monomial in 
$I^{2n}=(y^2,  yz, z^4,  xy,  xz)$ is at least $4n=\deg (y^6+z^6) f_1^2$. 
Hence $(y^6+z^6)f_1^2 \in (y^2,  yz)^{2n}$ and 
the the highest power of $z$ appearing in $(y^6+z^6) f_1^2$ is 
at most $2n$.  Therefore $n \ge 3$.  

\begin{clm}
If $n \le 2$, then $\overline{I^n} \cap A_n \subset I^n \cap A_n$. 
\end{clm}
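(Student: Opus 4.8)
The plan is to split an arbitrary element of $\overline{I^n}\cap A_n$ into a part lying in $K[y,z]$ and a part divisible by $x$, and then to feed the former into Claim~1 and the latter into Claim~2. Since $x^2=y^6+z^6$, every element of $R$ is uniquely of the form $f_0+xf_1$ with $f_0,f_1\in K[y,z]$; for a homogeneous $w\in A_n=R_{2n}$ this forces $f_0\in K[y,z]_{2n}$ and $f_1\in K[y,z]_{2n-3}$, and both summands again lie in $A_n$ (they are homogeneous of degree $2n$ in $R$).

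To see that this decomposition is compatible with $\overline{I^n}$, I would use the involution $\sigma$ of $R$ that fixes $y,z$ and sends $x\mapsto -x$. It is well defined because it preserves the relation $X^2-Y^6-Z^6$, and it maps $A$ to $A$ and $I$ to $I$: the generators $y^2,yz,z^4$ of $I$ are fixed while $xy,xz$ are merely negated, so $\sigma(I)=I$ and hence $\sigma(\overline{I^n})=\overline{I^n}$. Applying $\sigma$ to $w=f_0+xf_1\in\overline{I^n}$ gives $f_0-xf_1\in\overline{I^n}$, and since $\chara K\ne 2$ (as $p\nmid 2g+2=6$) we may take half-sums to conclude $f_0\in\overline{I^n}$ and $xf_1\in\overline{I^n}$ separately. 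Equivalently, one can observe that $I$ is homogeneous for the $\bbZ/2$-grading of $R$ given by the parity of the $x$-degree, so that $\overline{I^n}$ is homogeneous for this grading and the two components of $w$ land in $\overline{I^n}$ with no hypothesis on the characteristic.

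Now Claim~1 applied to $f_0\in K[y,z]_{2n}\cap\overline{I^n}$ gives $f_0\in I^n$. For the other piece, Claim~2 says that $xf_1\in\overline{I^n}$ with $f_1\ne 0$ forces $n\ge 3$; since we assume $n\le 2$, we must have $f_1=0$. Hence $w=f_0\in I^n$, which is exactly the asserted inclusion $\overline{I^n}\cap A_n\subset I^n\cap A_n$.

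I do not expect a serious obstacle: the essential point is recognizing that the symmetry $x\mapsto -x$ (or the $\bbZ/2$-grading it induces) decouples the problem into the two regimes already treated in Claims~1 and~2. The only things to watch are the degree bookkeeping---checking that $f_1$ sits in $K[y,z]_{2n-3}$, so that for $n\le 2$ its degree is at most $1$, precisely the range in which Claim~2 applies---and, if one prefers the involution argument to the grading argument, recording that $2$ is invertible in $K$.
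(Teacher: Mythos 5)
Your proof is correct and follows essentially the same route as the paper: decompose $f=f_0+xf_1$, use the involution $\sigma\colon x\mapsto -x$ (which fixes $I$ and hence $\overline{I^n}$) together with half-sums to place $f_0$ and $xf_1$ separately in $\overline{I^n}$, then invoke Claims 1 and 2. Your added remark that the $\mathbb{Z}/2$-grading by $x$-parity makes the characteristic hypothesis unnecessary is a nice touch, but the argument is otherwise the paper's own.
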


\par 
Any $f \in \overline{I^n} \cap A_n$ can be written as 
$f=f_0+xf_1$ for some $f_0 \in K[y,z]_{2n}$ and $f_1  \in K[y,z]_{2n-3})$. 
Let $\sigma \in \Aut_{K[y,z]^{(2)}}(A)$ 
such that $\sigma(x)=-x$. 
Then since $\sigma(I)=I$, we obtain $\sigma(f)=f_0 - xf_1 \in 
\overline{I^n}$. Hence 
\[
f_0 = \frac{f+\sigma(f)}{2} \in \overline{I^n} 
\quad \text{and} \quad  
xf_1 =\frac{f-\sigma(f)}{2} \in \overline{I^n}.
\] 
By Claims 1,2, we have $f_0 \in I^n$ and $f_1=0$. 
Therefore $f=f_0 \in I^n \cap A_n$, as required. 

\begin{clm}
$xy^3 \in  \overline{I^3} \setminus  Q\overline{I^2}$. 
\end{clm}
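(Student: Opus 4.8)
The plan is to verify the two memberships separately: that $xy^3$ lies in $\overline{I^3}$, and that it does \emph{not} lie in $Q\overline{I^2}$. For the first, I would exhibit $xy^3$ as an integral element over $I^3$. Recall $A = R^{(2)}$ with $R = K[x,y,z]/(x^2 - y^6 - z^6)$, so the relation $x^2 = y^6 + z^6$ is the essential tool. The natural candidate equation is to observe that $(xy^3)^2 = x^2 y^6 = (y^6 + z^6)y^6 = y^{12} + y^6 z^6$, and to check that this element of $K[y,z]$ of degree $24$ lies in $I^6 = (y^2, yz, z^4, xy, xz)^6$; indeed both $y^{12}$ and $y^6 z^6$ are visibly built from $y^2$ and $yz$ to the sixth power (e.g. $y^6 z^6 = (yz)^6$ and $y^{12} = (y^2)^6$), so $(xy^3)^2 \in I^6$, which gives the integral dependence equation $Z^2 - c = 0$ with $c = (xy^3)^2 \in I^6$, hence $xy^3 \in \overline{I^3}$.

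For the second, non-membership in $Q\overline{I^2}$, I would argue by degree and by the $\sigma$-action already introduced in Claim 3, where $\sigma$ is the involution with $\sigma(x) = -x$ fixing $K[y,z]^{(2)}$. Since $xy^3$ is an eigenvector for $\sigma$ with eigenvalue $-1$ (it is ``odd'' in $x$), any expression of it inside $Q\overline{I^2}$ may be projected onto its odd part. The generators of $Q = (y^2 - z^4, yz)$ are even in $x$, so writing a general element of $Q\overline{I^2}$ and extracting the $x$-odd component forces the coefficients multiplying $Q$ to come from the odd part of $\overline{I^2}$, i.e.\ from $x \cdot K[y,z]$. Thus membership $xy^3 \in Q\overline{I^2}$ would require $xy^3 \in (y^2-z^4, yz)\cdot(x K[y,z]_{\le 1} \cap \overline{I^2})$, and I would track the lowest $z$-degree and the total degree to rule this out. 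Concretely $xy^3$ has $z$-degree $0$ and its $x$-cofactor $y^3$ has $K[y,z]$-degree $3 = 2\cdot 2 - 1$; by Claim 2 (with $n=2$) the only odd elements $x f_1$ of $\overline{I^2}$ have $f_1$ forced into a constrained shape, and I would show no combination $(y^2 - z^4)\cdot(x g) + (yz)\cdot(x h)$ with $xg, xh \in \overline{I^2}$ can equal $xy^3$ without producing a spurious $z$-power or violating the degree-$5$ requirement on $xg, xh$.

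The main obstacle will be the second part: cleanly controlling which odd elements $x f_1$ actually lie in $\overline{I^2}$, since $\overline{I^2}$ is larger than $I^2$ and I must not accidentally use only $I^2$-generators. Here I would lean on Claim 2, which pins down that $x f_1 \in \overline{I^n}$ forces $n \ge 3$ when $f_1 \ne 0$ has the relevant low degree; applied at the level of the cofactors of $Q$ this says the odd part of $\overline{I^2}$ is too small to generate $xy^3$ after multiplication by the degree-raising generators of $Q$. Once the odd/even splitting via $\sigma$ reduces the problem to a statement purely about the module $Q \cdot (\text{odd part of } \overline{I^2})$ inside $x\,K[y,z]$, the remaining verification is a finite monomial-degree bookkeeping that I would carry out by comparing the minimal $z$-exponent and the total grading on both sides.
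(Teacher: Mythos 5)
Your proposal is correct, and while the first half (exhibiting $(xy^3)^2=y^{12}+y^6z^6=(y^6)^2+(y^3z^3)^2\in (I^3)^2$) is exactly the paper's argument, your route to the non-membership $xy^3\notin Q\overline{I^2}$ is genuinely different. The paper writes $xy^3=au+bv$ with $u,v\in\overline{I^2}$, uses that $a=y^2-z^4,\ b=yz$ form a regular sequence to replace $u,v$ by $xy,\ xz^3$ modulo $(b)h,(a)h$ and hence to assume $u,v\in A_2$, then invokes Claim~3 to get $u,v\in I^2$ and derives the contradiction $xy^3\in QI^2=I^3$ (which still tacitly requires checking $xy^3\notin I^3$). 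You instead apply the involution $\sigma$ directly to the relation in $Q\overline{I^2}$: since $Q\subset K[y,z]$ and $\sigma(\overline{I^2})=\overline{I^2}$, projecting onto the $x$-odd part reduces you to $xy^3=(y^2-z^4)xg+(yz)xh$ with $xg,xh\in\overline{I^2}\cap xK[y,z]$; then, because $\overline{I^2}$ is a homogeneous ideal, extracting the component of $R$-degree $6$ forces $y^3=y^2g_1+yzh_1$ with $g_1,h_1\in K[y,z]_1$ and $xg_1,xh_1\in\overline{I^2}$, whence $g_1\ne 0$ contradicts Claim~2 at $n=2$. This buys you a shorter path: you bypass Claim~3, the regular-sequence manipulation, and the unstated verification that $xy^3\notin I^3$, at the modest cost of explicitly noting that $\overline{I^2}$ is homogeneous so that graded components of its elements remain in it. The one blemish is your intermediate assertion that membership forces $xy^3\in(y^2-z^4,yz)\cdot(xK[y,z]_{\le 1}\cap\overline{I^2})$: the odd part of $\overline{I^2}$ is of course not contained in $xK[y,z]_{\le 1}$ (it contains $xy^3=xy\cdot y^2$ itself), so this restriction is only legitimate after the degree-selection step you defer to the end; once that bookkeeping is done as you describe, the argument closes correctly.
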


Since $(xy^3)^2=(y^6)^2+(y^3z^3)^2 \in (I^3)^2$, 
we get $xy^3 \in \overline{I^3}$.  
Assume $xy^3 \in Q\overline{I^2}=(a,b)\overline{I^2}$, 
where $a=y^2-z^4$ and $b=yz$. 
Then $axy+bxz^3=xy^3=au+bv$ for some $u,v \in \overline{I^2}$. 
Since $a,b$ forms a regular sequence, we can take an element 
$h \in A_1$ so that $u-xy=bh$ and $xz^3-v =ah$. So we may 
assume $u,v \in A_2$, and thus 
$u,v \in \overline{I^2} \cap A_2 \subset I^2$. 
However, this yields $xy^3=au+bv \in QI^2=I^3$, which is a 
contradiction.

\begin{clm}
$q(I)=1$, $q(2I)=q(\infty I)=0$, 
$\ell_A(\overline{I^3}/Q\overline{I^2})=1$ and 
$\overline{I^{n+1}}=Q\overline{I^n}$ for each $n \ge 3$.  
\end{clm}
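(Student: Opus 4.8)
The plan is to deduce all four assertions of the claim from the behaviour of the sequence $\{q(nI)\}$, using the two relations of Proposition \ref{eq:q(nI)} together with the facts already in hand, namely $p_g(A)=2$, $\nr(I)=1$, and Claim 4. I would attempt no further integral-closure computation: Claims 1--4 already carry all the geometric weight, and what is left is essentially bookkeeping with the invariants $q(nI)$.

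First I would pin down $q(I)$. Since $\nr(I)=1$, the description of $\nr$ in Proposition \ref{eq:q(nI)}(2) gives $q(0I)-q(I)=q(I)-q(2I)$, and as $q(0I)=p_g(A)=2$ this reads $q(2I)=2q(I)-2$. The sequence $\{q(nI)\}$ is non-negative and non-increasing by Proposition \ref{q(nI)}, so $q(2I)\ge 0$ forces $q(I)\ge 1$, while $q(I)\le q(0I)=2$; hence $q(I)\in\{1,2\}$.

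The decisive step is to exclude $q(I)=2$, and this is exactly where Claim 4 enters. If $q(I)=2=q(0I)$, then the monotonicity statement in Proposition \ref{q(nI)}(2), applied at $n=0$, makes the whole sequence constant, $q(nI)=2=p_g(A)$ for all $n$; equivalently $I$ is a $p_g$-ideal and $\br(I)=1$, so $\overline{I^{n+1}}=Q\overline{I^n}$ for every $n\ge 1$, in particular $\overline{I^3}=Q\overline{I^2}$. This contradicts Claim 4, which exhibits $xy^3\in\overline{I^3}\setminus Q\overline{I^2}$. Therefore $q(I)=1$, whence $q(2I)=2\cdot 1-2=0$, and monotonicity together with non-negativity gives $q(nI)=0$ for all $n\ge 2$, so $q(2I)=q(\infty I)=0$.

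Finally I would read off the lengths from Proposition \ref{eq:q(nI)}(1). Taking $n=2$ there, with $q(2I)=q(3I)=0$ and $q(I)=1$, yields
\[
\ell_A(\overline{I^3}/Q\overline{I^2})=q(3I)+q(I)-2q(2I)=1,
\]
matching the lower bound already furnished by Claim 4. For $n\ge 3$ all three terms $q((n-1)I),q(nI),q((n+1)I)$ vanish, so the same relation forces $\ell_A(\overline{I^{n+1}}/Q\overline{I^n})=0$, that is $\overline{I^{n+1}}=Q\overline{I^n}$; equivalently $\br(I)=3\,(=g+1)$. The only genuinely delicate point in this scheme is the exclusion of the $p_g$-ideal alternative $q(I)=2$: everything hinges on Claim 4, whose explicit verification that $xy^3$ lies in $\overline{I^3}$ but not in $Q\overline{I^2}$ is the real obstacle and the reason the preliminary claims were set up.
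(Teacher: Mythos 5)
Your proof is correct and follows essentially the same route as the paper: both arguments combine the relation forced by $\nr(I)=1$ via Proposition \ref{eq:q(nI)} with the monotonicity and non-negativity of $\{q(nI)\}$ from Proposition \ref{q(nI)}, and both use Claim 4 ($xy^3\in\overline{I^3}\setminus Q\overline{I^2}$) as the decisive input ruling out the degenerate ($p_g$-ideal) alternative. The remaining assertions are then read off from Proposition \ref{eq:q(nI)}(1) exactly as in the paper.
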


By Proposition \ref{q(nI)}, we have 
$2=p_g(A)=q(0\cdot I)\ge q(I) \ge q(2\cdot I) \ge 0$.  
If $q(I)=q(2\cdot I)$, then $q(2 \cdot I)=q(3 \cdot I)$. 
This implies $\ell_A(\overline{I^3}/Q\overline{I^2})=0$ 
from Proposition \ref{eq:q(nI)}. This contradicts Claim 4. 
Hence $q(I)=1$ and $q(2\cdot I)=0$. 
The other assertions follow from Proposition \ref{eq:q(nI)}.  
In particular, $\br(I)=3$.

\begin{clm}
$\bar e_1 (I) =3g-2=5$, $\bar e_2(I)=g=2$. 
\end{clm}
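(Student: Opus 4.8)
The plan is to read off both coefficients directly from Proposition \ref{p:normalHP}, since every quantity on the right-hand sides of its formulas has already been pinned down, either in the opening paragraph of this worked case or in the preceding claims. So the argument is essentially a substitution, and the real content lies upstream.

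First I would collect the input data. We recorded at the outset that $e_0(I)=\ell_A(A/Q)=4g-2=6$ and that $\ell_A(A/I)=p_g(A)=g=2$, and Claim 5 supplies $q(I)=1$ together with $q(\infty I)=q(2I)=0$. For $\bar{e}_1(I)$ I would invoke Proposition \ref{p:normalHP}(3), which yields
\[
\bar{e}_1(I)=e_0(I)-\ell_A(A/I)+p_g(A)-q(I)=6-2+2-1=5,
\]
in agreement with the general value $3g-1$ listed in Example \ref{e}(3). For $\bar{e}_2(I)$ I would use Proposition \ref{p:normalHP}(4), namely
\[
\bar{e}_2(I)=p_g(A)-q(\infty I)=2-0=2=g.
\]

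Since all the genuinely substantive work, namely computing $q(I)$, computing $q(\infty I)$, and establishing $\ell_A(\overline{I^3}/Q\overline{I^2})=1$, was carried out in Claims 1 through 5, no real obstacle remains at this step. The only point that demands any care is to confirm that the hypotheses of Proposition \ref{p:normalHP} are in force: that $I=I_Z$ is represented by an anti-nef cycle $Z>0$ on a resolution of $\Spec A$, so that the Riemann--Roch based identities of Theorem \ref{kato} apply. This is part of the standing setup for integrally closed $\m$-primary ideals, and for the Veronese ring $A=R^{(2)}$ it is already implicit in the geometric description underlying the earlier claims, so I would simply note it in passing and conclude that $\bar{e}_1(I)=5$ and $\bar{e}_2(I)=2$, matching the values asserted.
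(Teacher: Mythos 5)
Your proof is correct and follows exactly the paper's argument: both read off $\bar e_1(I)$ and $\bar e_2(I)$ by substituting the previously computed values $e_0(I)=6$, $\ell_A(A/I)=2$, $p_g(A)=2$, $q(I)=1$, $q(\infty I)=0$ into Proposition \ref{p:normalHP}(3) and (4). (You also implicitly catch that the claim's formula ``$3g-2$'' should read $3g-1$ to match both the numerical value $5$ and Example \ref{e}(3).)
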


By Proposition \ref{p:normalHP}, we have 
\begin{eqnarray*}
\bar e_1(I)&=& e_0(I)-\ell_A(A/I)+p_g(A)-q(I)=6-2+2-1=5, \\
\bar e_2(I) &=& p_g(A)-q(\infty I)=~2-0=2. 
\end{eqnarray*}
%
\section{Elliptic and Strongly  Elliptic   ideals}

We define the  Rees algebra $\bar{\cR}(I)$ and the 
associated graded ring $\bar{G}(I)$ associated to the normal filtration 
as follows:

\begin{eqnarray*}
\bar{\cR}(I) 
&:= & \bigoplus_{n\ge 0}\overline{I^n} t^n \subset A[t]. \\
\bar{G}(I) &:= & \bigoplus_{n\ge 0}\overline{I^n}/ \overline{I^{n+1}}  \cong  
\bar{\cR}(I)/\bar{\cR}(I)(1). 
\end{eqnarray*}
\par \vspace{2mm}
$\bar{\cR}(I)$ (resp.  $\bar{G}(I)$) is called the 
\textit{normal Rees algebra}
(resp.  \textit{the normal associated graded ring}) of $I$.  
We recall that the $a$-invariant of a graded $d$-dimensional ring 
$R$ with maximal homogeneous graded ideal $\mathfrak{M}$ 
was introduced by \cite{GW} and 
defined as $a(R):= \max\{n | [H^d_{\MM}(R)]_n \neq 0\}$, 
where $[H^d_{\MM}(R) ]_n$ denotes  the homogeneous component of 
degree $n$ of the graded $R$-module $H^d_{\MM}(R)$. 
\par 
It is known that $A$ is a rational singularity if and only if 
$\overline r(A)=1$, see \cite[Proposition 1.1]{OWY5}. 
In \cite{OWY1,OWY2},  the  authors introduced the notion of $p_g$-ideals, 
characterizing rational singularities.   

\begin{thm}[\textrm{cf. \cite{OWY1, OWY2, GN, Hun}}]  \label{r=1}
Let $(A,\m)$ be a two-dimensional excellent normal non-regular 
local domain containing an algebraically closed field $k= A/\m. $
Let $I=I_Z$ be an $\m$-primary integrally closed ideal of $A$. 
Put $\bar{G}=\bar{G}(I)$ and $\bar{\cR}=\bar{\cR}(I)$. 
Then the following conditions are equivalent$:$
\begin{enumerate}
\item $\br(I)= 1$. 
\item $q(I)=p_g(A)$. 
\item $I^2=QI$ and $\overline{I^n}=I^n$ for every $n \ge 1$. 
\item $\bar e_1(I) = e_0(I)-\ell_A(A/I)$. 
\item $\bar e_2(I)=0$. 
\item $\bar{G}$ is Cohen-Macaulay with $a(\bar{G})< 0$. 
\item $\bar{\cR}$ is Cohen-Macaulay. 
\end{enumerate}
When this is the case, $I$ is said to be a \textit{$p_g$-ideal}.  
\end{thm}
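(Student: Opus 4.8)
The plan is to split the seven conditions into a \emph{numerical} block $(1),(2),(4),(5)$, which I can dispatch almost formally from the Riemann--Roch machinery already assembled, and a \emph{structural} block $(3),(6),(7)$, which I reduce to the classical Cohen--Macaulayness theory of blowup algebras once normality of $I$ is in hand.

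First I would treat the numerical block. By Proposition~\ref{eq:q(nI)}(2) one has $\br(I)=\min\{n\ge 1 : q((n-1)I)=q(nI)\}$, so $\br(I)=1$ is exactly the equality $q(0\cdot I)=q(I)$, that is $q(I)=p_g(A)$ since $q(0\cdot I)=p_g(A)$; this is $(1)\Leftrightarrow(2)$. Next, $(2)\Leftrightarrow(4)$ is immediate from Proposition~\ref{p:normalHP}(3), namely $\bar e_1(I)-e_0(I)+\ell_A(A/I)=p_g(A)-q(I)$, whose left-hand side vanishes precisely when $q(I)=p_g(A)$. For $(2)\Leftrightarrow(5)$ I invoke the monotonicity in Proposition~\ref{q(nI)}(2): if $q(I)=p_g(A)=q(0\cdot I)$ then the sequence $q(nI)$ is constant, so $q(\infty I)=p_g(A)$ and $\bar e_2(I)=p_g(A)-q(\infty I)=0$ by Proposition~\ref{p:normalHP}(4); conversely $\bar e_2(I)=0$ gives $q(\infty I)=p_g(A)$, and the chain $p_g(A)=q(0\cdot I)\ge q(I)\ge q(\infty I)=p_g(A)$ forces $q(I)=p_g(A)$.

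The bridge to $(3)$ is a short induction relating normal and ordinary powers. Granting $(1)$, I have $\overline{I^{n+1}}=Q\overline{I^n}$ for all $n\ge 1$; at $n=1$ this reads $\overline{I^2}=QI$ since $\overline I=I$, so $QI\subseteq I^2\subseteq\overline{I^2}=QI$ forces $I^2=QI=\overline{I^2}$, and inductively $\overline{I^{n+1}}=Q\overline{I^n}=QI^n$ together with $QI^n\subseteq I^{n+1}\subseteq\overline{I^{n+1}}$ yields $I^{n+1}=QI^n=\overline{I^{n+1}}$ for all $n$, which is $(3)$. Conversely $(3)$ gives $I^{n+1}=QI^n$ and $\overline{I^n}=I^n$ for every $n$, hence $\overline{I^{n+1}}=Q\overline{I^n}$ and $\br(I)=1$. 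For $(3)\Rightarrow(6)$ I use that normality collapses $\bar{G}(I)=G(I)$ and $\bar{\cR}(I)=\cR(I)$ onto the ordinary blowup algebras; since $A$ is Cohen--Macaulay of dimension two and $I^2=QI$, the Valabrega--Valla condition $Q\cap I^n=QI^{n-1}$ holds trivially for every $n$ (it is $Q$ for $n=1$ and equals $I^n=QI^{n-1}$ for $n\ge 2$), so $G(I)$ is Cohen--Macaulay, and reduction number one forces $a(\bar{G})=a(G)<0$. The equivalence $(6)\Leftrightarrow(7)$ is then the filtered Trung--Ikeda/Goto--Shimoda criterion (cf.~\cite{GN}), that in dimension two the Rees algebra is Cohen--Macaulay iff the associated graded ring is Cohen--Macaulay with negative $a$-invariant. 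To close the cycle I return from $(6)$ to $(5)$ by reading off the normal Hilbert series of a Cohen--Macaulay $\bar{G}$: although $\bar{G}$ need not be standard graded, the images of a minimal reduction $Q=(a,b)$ give a degree-one homogeneous system of parameters, which on a Cohen--Macaulay $\bar{G}$ is a regular sequence, so the Hilbert series takes the form $h(t)/(1-t)^2$ with $h_i\ge 0$, $a(\bar{G})=\deg h(t)-2$, and $\bar e_2(I)=\sum_i\binom{i}{2}h_i$; hence $a(\bar{G})<0$ forces $\deg h\le 1$ and $\bar e_2(I)=0$, giving $(5)$.

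The hardest part is the structural block $(6),(7)$: every numerical equivalence is a formal consequence of the stated propositions, but the Cohen--Macaulayness of the \emph{normal} blowup algebras demands more care. The subtlety flagged earlier, that $\bar{\cR}(I)$ is not generated in degree one, means $\bar{G}(I)$ is not standard graded, so the Hilbert-series dictionary is not automatic; the observation restoring it is precisely that a minimal reduction furnishes a degree-one homogeneous system of parameters, which is a regular sequence on a Cohen--Macaulay $\bar{G}$ and thereby recovers the form $h(t)/(1-t)^2$ with the usual identities $a(\bar{G})=\deg h-2$ and $\bar e_2=\sum_i\binom{i}{2}h_i$. Verifying these for the normal filtration, together with importing the Valabrega--Valla criterion and the filtered Trung--Ikeda theorem, is where the real work lies.
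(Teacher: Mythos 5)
Your proof is correct, but it takes a genuinely different route from the paper's, whose proof is essentially a four-line delegation to Goto--Nishida's memoir on filtered blowup algebras: there $(1)\Leftrightarrow(3)$ is the same inclusion triviality you use, $(1)\Leftrightarrow(5)$ is \cite[Part II, Proposition 8.1]{GN}, the equivalence of $(4)$, $(5)$ and $(7)$ is \cite[Part II, Theorem 8.2]{GN}, $(6)\Leftrightarrow(7)$ is \cite[Part II, Corollary 1.2]{GN}, and $(2)\Leftrightarrow(4)$ comes from Proposition \ref{eq:q(nI)}. You instead derive the whole numerical block $(1),(2),(4),(5)$ directly from the paper's own Riemann--Roch toolkit (Propositions \ref{q(nI)}, \ref{eq:q(nI)} and \ref{p:normalHP}), which is sound and arguably more transparent, since it makes visible how the invariants $q(nI)$ drive every numerical equivalence. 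Your treatment of the structural block is the genuinely novel piece: rather than invoking the filtered Cohen--Macaulayness theorems wholesale, you use the collapse $\bar{G}(I)=G(I)$ forced by $(3)$ to reduce to the classical adic Valabrega--Valla criterion (whose hypothesis $Q\cap I^n=QI^{n-1}$ is indeed trivially verified when the reduction number is one), retain only the Goto--Shimoda/Trung--Ikeda-type equivalence for $(6)\Leftrightarrow(7)$ --- the one citation you share with the paper --- and close the cycle with a direct Hilbert-series argument for $(6)\Rightarrow(5)$. That last step is exactly where the care you flag is needed: $\bar{G}$ is not standard graded, but your observation that the images of a minimal reduction give a degree-one homogeneous system of parameters, hence a regular sequence on the Cohen--Macaulay $\bar{G}$, validly yields the form $h(t)/(1-t)^2$ with $h_i\ge 0$, $a(\bar{G})=\deg h-2$ and $\bar e_2(I)=\sum_i\binom{i}{2}h_i$, so $a(\bar{G})<0$ does force $\bar e_2(I)=0$. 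The trade-off is clear: the paper's proof is shorter and uniform, treating the normal filtration as one instance of the general theory in \cite{GN}; yours is more self-contained and elementary, at the cost of verifying by hand the filtration-level Hilbert-series identities that \cite{GN} and \cite{RV} supply off the shelf.
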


\begin{proof} 
Since $QI^{n-1} \subset I^n \subset \overline{I^n}$ 
for every $n \ge 2$, (1) $\Leftrightarrow (3)$ is trivial. 
(1) $\Leftrightarrow$ (5) (resp. (6) $\Leftrightarrow$ (7)) follows from 
\cite[Part II, Proposition 8.1]{GN} 
(resp. \cite[Part II, Corollary 1.2]{GN}).
Moreover. the equivalence of (4),(5) and (7) follows from 
\cite[Part II, Theorem 8.2]{GN}. 
(2) $\Leftrightarrow$ (4) follows from Proposition $\ref{eq:q(nI)}$. \qed
\end{proof}

\par 
It is known that $A$ is a rational singularity if and only if 
any integrally closed $\m$-primary ideal is a $p_g$-ideal
 (see \cite{OWY1, OWY2}).  
We define 
\[
\overline{r}(A):=\max\{\br(I) \,|\, \text{$I$ is an integrally closed 
$\m$-primary ideal}\}. 
\]
Then  $A$ is a rational singularity if and only if 
$\overline r(A)=1$, see \cite[Proposition 1.1]{OWY5}. 
 
\par \vspace{1mm}
Okuma proved in \cite[Theorem 3.3]{Ok}  
that if $A$ is an elliptic singularity, then $\overline r(A)=2$.
For the definition of elliptic singularity we refer to \cite[page 428]{W} 
or \cite[Definition 2.1]{Ok}. 
We investigate the integrally closed $\m$-primary ideals  such that 
$\br(I) =2$ with the aim to  characterize elliptic singularities.  
Next result extends and completes  a result by S. Itoh  
\cite[Proposition 10]{It2},  by using a different approach. 

\begin{thm} \label{nr2}
Let $(A,\m)$ be a two-dimensional excellent normal local domain 
containing an algebraically closed field $k= A/\m$ and let 
$I \subset A$ be an $\m$-primary integrally closed ideal. 
Put $\bar{G}=\bar{G}(I)$ and $\bar{\cR}=\bar{\cR}(I)$. 
Then the following conditions are equivalent$:$
\begin{enumerate}
 \item $\br(I)=2$.   
 \item $p_g(A) > q(I)=q(\infty I)$. 
 \item $\bar{e}_1(I) = e_0(I) -\ell_A( A/I) + \bar{e}_2(I)$  
 and $\bar{e}_2(I) >0$.
 \item $\ell_A(A/\overline{I^{n+1}}) = \bar{P}_I(n)$ for all $n\ge 0$ and $\bar{e}_2(I) >0$.
 \item $\bar{G}$ is Cohen-Macaulay with $a(\bar{G})=0$.
\end{enumerate}
When this is the case, $I$ is said to be an \textbf{elliptic ideal}  and 
 $\ell_A ([H^2_{\MM}(\bar{G})_0) = \ell_A(\overline{I^2}/QI) = \bar{e}_2(I)$. 
 \end{thm}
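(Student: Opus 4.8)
The plan is to separate the purely numerical equivalences $(1)\Leftrightarrow(2)\Leftrightarrow(3)\Leftrightarrow(4)$, which can be read off from the sequence $\{q(nI)\}$ and Kato's formula, from condition $(5)$, which carries the Cohen--Macaulay content and is where the real work lies.

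First I would prove $(1)\Leftrightarrow(2)$. By Proposition \ref{eq:q(nI)} we have $\br(I)=\min\{n\in\bbZ_{+} : q((n-1)I)=q(nI)\}$, so $\br(I)=2$ says precisely that $q(0I)=p_g(A)\neq q(I)$ while $q(I)=q(2I)$. The stabilization in Proposition \ref{q(nI)} turns $q(I)=q(2I)$ into $q(I)=q(\infty I)$, and since $q(I)\le p_g(A)$ the relation $q(0I)\neq q(I)$ becomes $p_g(A)>q(I)$; the converse is the same reading run backwards, where $p_g(A)>q(I)$ rules out $\br(I)=1$. For $(2)\Leftrightarrow(3)$ I would substitute the two identities $\bar{e}_1(I)-e_0(I)+\ell_A(A/I)=p_g(A)-q(I)$ and $\bar{e}_2(I)=p_g(A)-q(\infty I)$ of Proposition \ref{p:normalHP}: condition $(3)$ then becomes $q(I)=q(\infty I)$ together with $p_g(A)>q(\infty I)$, which is $(2)$.

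For $(2)\Leftrightarrow(4)$ I would combine Kato's formula (Theorem \ref{kato} applied to the cycle $(n+1)Z$) with the coefficient formulas of Proposition \ref{p:normalHP} to get $\ell_A(A/\overline{I^{n+1}})=\bar{P}_I(n)-\bigl(q((n+1)I)-q(\infty I)\bigr)$. As $q((n+1)I)\ge q(\infty I)$, the Hilbert function never exceeds the polynomial, and equality for every $n\ge 0$ is equivalent to $q(mI)=q(\infty I)$ for all $m\ge 1$, hence (by stabilization) to $q(I)=q(\infty I)$; adjoining $\bar{e}_2(I)>0$ recovers $(2)$. Along the way I would record the data needed for the closing assertion: writing $\Delta q(n):=q(nI)-q((n+1)I)$, the identity $\ell_A(\overline{I^{n+1}}/Q\overline{I^n})=\Delta q(n-1)-\Delta q(n)$ of Proposition \ref{eq:q(nI)} together with the pattern $\Delta q(0)=p_g(A)-q(I)>0$, $\Delta q(n)=0$ for $n\ge 1$, gives $\overline{I^{n+1}}=Q\overline{I^n}$ for all $n\ge 2$ and $\ell_A(\overline{I^2}/QI)=p_g(A)-q(I)=\bar{e}_2(I)$.

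The crux is $(5)$, and here I would invoke the same Goto--Nishida machinery \cite{GN} that underlies Theorem \ref{r=1}, now with the $a$-invariant allowed to reach $0$: the stabilization $q(nI)=q(\infty I)$ for all $n\ge 1$ coming from $(2)$ forces $\bar{G}$ to be Cohen--Macaulay. Granting this, $\dim\bar{G}=2$ with minimal reduction $Q$ gives $a(\bar{G})=\br(I)-2=0$, while $\bar{e}_2(I)>0$ excludes $a(\bar{G})<0$ (exactly the $p_g$-case $\bar{e}_2=0$ of Theorem \ref{r=1}); conversely $\bar{G}$ Cohen--Macaulay with $a(\bar{G})=0$ yields $\br(I)=a(\bar{G})+2=2$, i.e.\ $(1)$. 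Finally, for a Cohen--Macaulay $\bar{G}$ of dimension $2$ with $a(\bar{G})=0$ the $h$-polynomial has degree $2$, so $\bar{e}_2(I)$ is its top coefficient, which by graded local duality equals $\ell_A([H^2_{\MM}(\bar{G})]_{a(\bar{G})})=\ell_A([H^2_{\MM}(\bar{G})]_0)$; together with $\ell_A(\overline{I^2}/QI)=\bar{e}_2(I)$ from the previous paragraph this gives the stated chain of equalities. I expect the Cohen--Macaulayness of $\bar{G}$ to be the only genuine obstacle: the equivalences $(1)$--$(4)$ are forced by $\{q(nI)\}$ and Riemann--Roch, whereas controlling $\depth\bar{G}$ requires the Valabrega--Valla type criterion $Q\cap\overline{I^{n+1}}=Q\overline{I^n}$, whose delicate instance is $n=1$, where $\overline{I^2}\supsetneq QI$ yet $\bar{G}$ must remain Cohen--Macaulay with $a(\bar{G})=0$.
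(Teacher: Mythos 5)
Your treatment of the equivalences $(1)\Leftrightarrow(2)\Leftrightarrow(3)\Leftrightarrow(4)$ matches the paper's proof: Proposition \ref{eq:q(nI)} for $(1)\Leftrightarrow(2)$, substitution of the identities of Proposition \ref{p:normalHP} for $(2)\Leftrightarrow(3)$, and the formula $\ell_A(A/\overline{I^{n+1}})=\bar{P}_I(n)-\bigl(q((n+1)I)-q(\infty I)\bigr)$ coming from Kato's theorem for $(2)\Leftrightarrow(4)$; the computation $\ell_A(\overline{I^2}/QI)=\Delta q(0)=p_g(A)-q(\infty I)=\bar e_2(I)$ is also exactly what is needed for the closing assertion. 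The implication $(5)\Rightarrow(1)$ via $\br(I)=a(\bar G)+\dim A$ is likewise the paper's argument.

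The gap is in $(1)\Rightarrow(5)$. You assert that the stabilization $q(nI)=q(\infty I)$ for $n\ge 1$ ``forces'' $\bar G$ to be Cohen--Macaulay by Goto--Nishida machinery, and then, at the end, you correctly observe that what is actually required is the Valabrega--Valla criterion $Q\cap\overline{I^{n+1}}=Q\overline{I^n}$ for all $n$, whose only nontrivial instance here is $n=1$ --- but you never verify that instance. For $n\ge 2$ the condition is immediate from $\overline{I^{n+1}}=Q\overline{I^n}$, whereas $\overline{I^2}\supsetneq QI$, and the equality $\overline{I^2}\cap Q=QI$ is not a formal consequence of anything you have established: it is the Huneke--Itoh intersection theorem (\cite[Theorem in page 371]{Hun}, \cite{It1}), valid because $I$ is integrally closed and $A$ is analytically unramified. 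This is precisely the external input the paper invokes at this point, together with the observation that $\overline{I^{n+1}}\colon a=\overline{I^n}$ makes $a^{*}$ a nonzerodivisor on $\bar G$. Without it, the Cohen--Macaulayness of $\bar G$ --- and hence condition $(5)$ and the identification $\ell_A([H^2_{\MM}(\bar G)]_0)=\ell_A(\overline{I^2}/QI)$, which presupposes $\bar G$ Cohen--Macaulay with $a(\bar G)=0$ --- does not follow. Once that theorem is supplied, your graded-duality argument giving $\ell_A([H^2_{\MM}(\bar G)]_0)=h_2=\bar e_2(I)$ is a legitimate, slightly different route to the final equality, for which the paper instead relies on \cite[Proposition 3.1]{MSV} (cited in the proof of Theorem \ref{e_2=1}).
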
 
 
\begin{proof} 
$(1) \Longleftrightarrow (2):$ It follows from Proposition
$\ref{eq:q(nI)}$(2). 

\par \vspace{2mm} 
\noindent $(2) \Longleftrightarrow (3):$ 
By Proposition \ref{p:normalHP} we have
\begin{eqnarray*}
\bar{e}_1(I) &=& e_0(I) -\ell_A( A/I) + \bar{e}_2(I) 
- \bigg\{q(I)-q(\infty I)\bigg\}. \\
\bar{e}_2(I) &=& p_g(A) - q(\infty I) \ge 0. 
\end{eqnarray*}
The assertion follows from here. 

\par \vspace{2mm} 
$(2) \Longleftrightarrow (4):$   
Assume $I= I_Z = H^0(X,\mathcal{O}_X(-Z))$ 
for some resolution $X \to \Spec A$. 
By Kato's Riemann-Roch formula, for every integer $n \ge 0$, we have
\[
\ell_A(A/\overline{I^{n+1}})+h^1(\mathcal{O}_X(-(n+1)Z))
=-\dfrac{(n+1)^2Z^2+(n+1) K_XZ}{2} +p_g(A). 
\]
Hence 
\begin{eqnarray*}
\ell_A(A/\overline{I^{n+1}}) &=& \bar{e}_0(I){n+2 \choose 2} 
- \bar{e}_1(I) {n+1 \choose 1} + \bigg\{p_g(A)-q((n+1)I) \bigg\} \\
&=&  \bar{P}_I(n)- \bigg\{q((n+1)I)-q(\infty I) \bigg\}.
\end{eqnarray*}

\par 
Assume (4). 
By replacing  $0$ to  $n$ in the above equation, 
we get $q(I)=q(\infty I)$, hence (2). 
Conversely, if $q(I)=q(\infty I)$, then since $q((n+1)I)=q(\infty I)$ 
for all $n \ge 1$, the above equation implies (4). 

\par \vspace{2mm} 
\par \noindent $(1) \Longrightarrow (5):$   
Put $Q=(a,b)$. 
Since $\overline{I^{n+1}} \colon a=\overline{I^n}$, 
$a^{*}$, the image of $a$ in $\bar{G}$ is a non zero divisor of $\bar{G}$.
\par   
By assumption, we have  
$\overline{I^{n+1} } \cap Q = Q {\overline{I^{n}}} \cap Q
=Q {\overline{I^{n}}}$ for every $n \ge 2$. 
On the other hand, we have 
$\overline{I^2} \cap Q = QI$ 
by \cite[Theorem in page 371]{Hun} or \cite[Theorem]{It1}. 
Then it is well-known that
$a^{*}$, $b^{*}$ forms a regular sequence in $\bar{G}$, and thus 
$\bar{G}$ is Cohen-Macaulay (see also \cite{VV})
and  $2=\br(I)=a(\bar{G})+\dim A=a(\bar{G})+2$. 
Thus $a(\bar{G})=0$, as required. 

\par \vspace{2mm} 
\noindent $(5) \Longrightarrow (1):$ 
Since $\bar{G}$ is Cohen-Macaulay, 
we have $\br(I)=a(\bar{G})+\dim A=0+2=2$.   \qed
\end{proof}

\par 
We notice that if $A$ is \textit{not} a rational singularity, then 
elliptic ideals always exist.  

\begin{prop} \label{exist} Let $(A,\m)$ be a two-dimensional excellent 
normal local domain containing an algebraically closed field 
$k= A/\m$ 
 and let $I \subset A$ be an $\m$-primary integrally closed ideal 
which is not a $p_g$-ideal. Then there exists a positive integer $n$ such 
that $\overline{I^n} $ is an elliptic ideal. 
In particular, if $A$ is not a rational singularity, then for any 
$\m$-primary integrally closed ideal $I$ of $A$, 
then $\overline{I^{n}}$ is either a $p_g$-ideal or 
an elliptic ideal for every $n \ge p_g(A)$. 
\end{prop}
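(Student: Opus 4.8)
The plan is to exploit the monotone, eventually-stabilizing behavior of the sequence $\{q(nI)\}$ established in Proposition~\ref{q(nI)}, together with the characterizations of $p_g$-ideals (Theorem~\ref{r=1}) and elliptic ideals (Theorem~\ref{nr2}) in terms of the reduction number $\br$. The key observation is that for any integrally closed $\m$-primary ideal $I$, the ideal $\overline{I^n}$ is again integrally closed, and its associated invariant is $q(\overline{I^n})=q(nI)$; moreover, if $J=\overline{I^n}$ then for any $m\ge 1$ we have $\overline{J^m}=\overline{I^{nm}}$, so that $q(mJ)=q(nmI)$. In particular, $q(J)=q(nI)$ and $q(\infty J)=q(\infty I)$. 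The whole argument reduces to reading off $\br(\overline{I^n})$ from the comparison of $q(nI)$, $q((n+1)I)$, and $q(\infty I)$ via Proposition~\ref{eq:q(nI)}(2).

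First I would treat the first assertion. Since $I$ is not a $p_g$-ideal, Theorem~\ref{r=1} gives $q(I)<p_g(A)$, equivalently $\br(I)\ge 2$, so the sequence $\{q(nI)\}$ is strictly decreasing until it stabilizes at $q(\infty I)$. Let $n_0$ be the smallest integer with $q(n_0 I)=q(\infty I)$; by Proposition~\ref{q(nI)}(2) such $n_0$ exists and $n_0\ge 1$. I would then take $n=n_0$ and set $J=\overline{I^{n_0}}$. Using $q(J)=q(n_0 I)=q(\infty I)=q(\infty J)$, I claim $J$ is an elliptic ideal: indeed $q(J)=q(\infty J)$ together with $q(\infty J)=q(\infty I)<p_g(A)$ (which holds because $I$ is not a $p_g$-ideal, forcing $\br(I)\ge 2$ and hence $q(\infty I)\le q(I)<p_g(A)$) is precisely condition~(2) of Theorem~\ref{nr2}, namely $p_g(A)>q(J)=q(\infty J)$. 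Thus $\br(J)=2$ and $J$ is elliptic.

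For the second assertion, assume $A$ is not rational and fix any integrally closed $\m$-primary $I$; I want to show $\overline{I^n}$ is either a $p_g$-ideal or elliptic for every $n\ge p_g(A)$. By Proposition~\ref{q(nI)}(2) the sequence stabilizes by step $p_g(A)$, so for every $n\ge p_g(A)$ we have $q(nI)=q(\infty I)$. Setting $J=\overline{I^n}$, this gives $q(J)=q(nI)=q(\infty I)=q(\infty J)$. Now I split into two cases according to whether $q(\infty I)$ equals $p_g(A)$. If $q(J)=q(\infty J)=p_g(A)$, then by Theorem~\ref{r=1}(2) the ideal $J$ is a $p_g$-ideal; if instead $q(J)=q(\infty J)<p_g(A)$, then condition~(2) of Theorem~\ref{nr2} holds and $J$ is elliptic. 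This dichotomy is exhaustive because $q(J)\le p_g(A)$ always holds by Proposition~\ref{q(nI)}(1), completing the proof.

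The main obstacle, and the point deserving the most care, is the identification $\overline{(\overline{I^n})^m}=\overline{I^{nm}}$ and the consequent equalities $q(\overline{I^n})=q(nI)$ and $q(\infty\, \overline{I^n})=q(\infty I)$; these are what let me transfer the stabilization of $\{q(kI)\}$ into a statement about the single ideal $\overline{I^n}$. This rests on the fact that the normal filtration is a good $I$-filtration (the Rees result cited in the introduction) so that integral closures compose correctly, and on the geometric interpretation $q(kI)=h^1(\mathcal{O}_X(-kZ))$ being independent of representation. Once these identifications are in hand, the argument is a short dichotomy driven entirely by whether the stable value $q(\infty I)$ hits the ceiling $p_g(A)$ or falls strictly below it.
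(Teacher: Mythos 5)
Your proof is correct, but it verifies ellipticity through a different clause of Theorem \ref{nr2} than the paper does. The paper picks $n$ with $\ell_A(A/\overline{I^{n}})=\bar{P}_I(n-1)$, uses the transformation formulas $\bar e_0(I^n)=n^2\bar e_0(I)$, $\bar e_1(I^n)=n\bar e_1(I)+\binom{n}{2}\bar e_0(I)$, $\bar e_2(I^n)=\bar e_2(I)$ (justified by $\overline{(\overline{I^n})^{p}}=\overline{I^{np}}$), and checks condition (3) of Theorem \ref{nr2}, namely $\bar e_1=e_0-\ell_A(A/\,\cdot\,)+\bar e_2$ with $\bar e_2>0$. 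You instead pick $n$ with $q(nI)=q(\infty I)$ and check condition (2), $p_g(A)>q(J)=q(\infty J)$, via the identifications $q(mJ)=q(nmI)$ for $J=\overline{I^n}$. These are really two faces of the same argument: the displayed identity in the proof of $(2)\Leftrightarrow(4)$ of Theorem \ref{nr2} shows that $\ell_A(A/\overline{I^{n}})=\bar P_I(n-1)$ holds exactly when $q(nI)=q(\infty I)$, so you are selecting the same set of exponents $n$; what differs is only which equivalent characterization you invoke afterwards. Your route has the small advantage of avoiding the Hilbert-coefficient bookkeeping entirely and of making the dichotomy in the second assertion ($q(\infty I)=p_g(A)$ versus $q(\infty I)<p_g(A)$) completely transparent; the paper's route keeps the statement closer to the numerical data $\bar e_i$ that the section is organized around. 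The one step you rightly flag as needing care, $\overline{(\overline{I^n})^{m}}=\overline{I^{nm}}$, is elementary from $I^{nm}\subseteq(\overline{I^n})^m\subseteq\overline{I^{nm}}$, and the representation of $\overline{I^n}$ by $nZ$ on the same resolution (so that $q(mJ)=h^1(\cO_X(-nmZ))$) is exactly how the paper uses it as well.
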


\begin{proof}  
Let $n$ be a positive integer such that  
$\ell_A(A/\overline{I^{n}})=\bar{P}_I(n-1)$. Since the integral closure of 
$(\overline{I^n})^{p}$ coincides with 
$\overline{I^{n p}}$ for $p$ large,  we have 
\[
\bar{e}_0(I^{n})= n^2 \bar{e}_0(I) ;  \ \ 
\bar{e}_1(I^{n}) = n \bar{e}_1(I)   + {n \choose 2}  \bar{e}_0(I); \ \ 
\bar{e}_2(I^n)= \bar{e}_2(I). 
\]
After substituting the  $\bar{e}_i(I^n)$'s  with the corresponding 
expressions in terms of the $\bar{e}_i(I)$'s  we conclude that
\begin{eqnarray*}
\bar{e}_2(I^n) -  \bar{e}_1(I^n) + \bar{e}_0(I^n)-\ell_A(A/\overline{I^n})
&=& \bar{e}_0(I) {n+1 \choose 2} - \bar{e}_1(I) n + \bar{e}_2(I) 
-\ell_A(A/\overline{I^n}) \\
&=& \bar{P}_I(n-1)  -\ell_A(A/\overline{I^n}) =0.
\end{eqnarray*}
Since $I$ is a not $p_g$-ideal, then $\bar{e}_2(I^n)= \bar{e}_2(I) >0. $   
Hence, by Theorem \ref{nr2}, then $\overline{I^n}$ is an elliptic ideal. 
\qed
\end{proof} 

\par 
We denote by $\mathfrak{M} = \m + \bar{\cR}_{+} $ the homogeneous maximal 
ideal of  $\bar{\cR}$. 
As usual we say that $\bar{\cR}$  is $($FLC$)$ if 
$\ell_A(H^i_{\mathfrak{M}} (\bar{\cR}))< \infty $ for every $i \le \dim A=2$.  

\begin{prop} \label{H} 
Assume  $I$ is an elliptic ideal,  then 
$\bar{\cR}$  is $($FLC$)$ but not Cohen-Macaulay with 
\[
H^2_{\MM}(\bar{\cR}) = [H^2_{\MM}(\bar{\cR})]_0
\cong [H_{\MM}^2(\bar{G})]_0. 
\]
\end{prop}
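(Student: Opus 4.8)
The plan is to exploit the fact, established in Theorem \ref{nr2}, that when $I$ is an elliptic ideal the normal associated graded ring $\bar{G}=\bar{G}(I)$ is Cohen-Macaulay with $a(\bar{G})=0$. My strategy is to transport cohomological information from $\bar{G}$ back to $\bar{\cR}=\bar{\cR}(I)$ via the defining exact sequence of graded $\bar{\cR}$-modules
\[
0 \longrightarrow \bar{\cR}(1) \longrightarrow \bar{\cR} \longrightarrow \bar{G} \longrightarrow 0,
\]
where $\bar{\cR}(1)$ denotes the shift appearing in the isomorphism $\bar{G}\cong \bar{\cR}/\bar{\cR}(1)$ quoted in the text. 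First I would record that $\dim\bar{\cR}=3$ and $\dim\bar{G}=2$, and that the Cohen-Macaulayness of $\bar{G}$ gives $H^i_{\MM}(\bar{G})=0$ for $i\le 1$ together with $H^2_{\MM}(\bar{G})$ concentrated in the single degree $a(\bar{G})=0$, i.e. $H^2_{\MM}(\bar{G})=[H^2_{\MM}(\bar{G})]_0$.

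The main step is to run the long exact sequence of local cohomology with respect to $\MM$ associated to the short exact sequence above. Since $\bar{\cR}$ is a two-dimensional normal domain's Rees-type algebra of dimension three, the top cohomology $H^3_{\MM}(\bar{\cR})$ is nonzero, while the relevant finiteness is governed by the lower pieces. From the long exact sequence
\[
\cdots \to H^i_{\MM}(\bar{\cR})(1) \to H^i_{\MM}(\bar{\cR}) \to H^i_{\MM}(\bar{G}) \to H^{i+1}_{\MM}(\bar{\cR})(1) \to \cdots
\]
the vanishing of $H^i_{\MM}(\bar{G})$ for $i\le 1$ forces $H^i_{\MM}(\bar{\cR})(1)\to H^i_{\MM}(\bar{\cR})$ to be isomorphisms for $i\le 1$; a standard degree/graded-Nakayama argument (each graded piece is a finite-length $A$-module and the shift map is injective in high degree) then yields $H^0_{\MM}(\bar{\cR})=H^1_{\MM}(\bar{\cR})=0$. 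In particular $\bar{\cR}$ satisfies (FLC) once I control $H^2_{\MM}(\bar{\cR})$, and the non-vanishing of $H^2_{\MM}(\bar{G})$ will be exactly what obstructs $\bar{\cR}$ from being Cohen-Macaulay.

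To pin down $H^2_{\MM}(\bar{\cR})$, I continue the sequence in degree $i=2$: using $H^1_{\MM}(\bar{G})=0$ and $H^2_{\MM}(\bar{\cR})(1)\hookrightarrow H^2_{\MM}(\bar{\cR})$ I would show the shift map is an isomorphism in all nonzero degrees, so that $H^2_{\MM}(\bar{\cR})$ is concentrated in degree $0$, and then identify $[H^2_{\MM}(\bar{\cR})]_0\cong [H^2_{\MM}(\bar{G})]_0$ from the connecting portion $H^2_{\MM}(\bar{\cR})\to H^2_{\MM}(\bar{G})\to H^3_{\MM}(\bar{\cR})(1)$. The hard part will be the bookkeeping on the grading: I must verify that the maps induced by the degree-one shift are isomorphisms outside degree $0$ and that the single surviving graded component matches $[H^2_{\MM}(\bar{G})]_0$ rather than leaking into $H^3_{\MM}(\bar{\cR})$. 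This is where $a(\bar{G})=0$ is essential, since it localizes all of $H^2_{\MM}(\bar{G})$ in degree $0$ and thereby kills the connecting map in every other degree. Once this is done, $H^2_{\MM}(\bar{\cR})=[H^2_{\MM}(\bar{\cR})]_0\cong [H^2_{\MM}(\bar{G})]_0$ has finite length, giving (FLC), and its non-vanishing (since $\bar{e}_2(I)>0$ forces $[H^2_{\MM}(\bar{G})]_0\ne 0$ by the last assertion of Theorem \ref{nr2}) shows $\bar{\cR}$ is not Cohen-Macaulay.
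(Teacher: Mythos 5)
Your overall strategy -- transporting $a(\bar{G})=0$ and the Cohen--Macaulayness of $\bar{G}$ through long exact sequences of local cohomology to pin $H^2_{\MM}(\bar{\cR})$ down to degree $0$ -- is the same as the paper's, but your starting point contains a genuine error that the rest of the argument inherits. The sequence
\[
0 \to \bar{\cR}(1) \to \bar{\cR} \to \bar{G} \to 0
\]
is not exact: in degree $-1$ one has $[\bar{\cR}(1)]_{-1}=\bar{\cR}_0=A\neq 0$ while $[\bar{\cR}]_{-1}=0$, so there is no injection $\bar{\cR}(1)\hookrightarrow\bar{\cR}$. The correct kernel is $\mathcal{N}(1)$ with $\mathcal{N}=\bar{\cR}_{+}$, and the paper therefore works with \emph{two} short exact sequences, $0\to\mathcal{N}(1)\to\bar{\cR}\to\bar{G}\to 0$ and $0\to\mathcal{N}\to\bar{\cR}\to{}_hA\to 0$. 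The second sequence is not mere bookkeeping: it is what identifies $[H^2_{\MM}(\mathcal{N})]_n$ with $[H^2_{\MM}(\bar{\cR})]_n$ in every degree $n\neq 0$, and the failure of that identification in degree $0$ (where $H^2_{\m}(A)$ enters) is precisely why the degree-$0$ component of $H^2_{\MM}(\bar{\cR})$ can survive while all others vanish. Your claim that ``the shift map is an isomorphism in all nonzero degrees'' cannot even be formulated without this comparison, since the shift relates $\bar{\cR}$ to $\mathcal{N}$, not to itself.

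Two further points need repair. First, your derivation of $H^0_{\MM}(\bar{\cR})=H^1_{\MM}(\bar{\cR})=0$ by ``graded Nakayama'' presupposes that the graded pieces vanish for $n\ll 0$, which is not automatic for local cohomology with respect to $\MM$ (as opposed to $\bar{\cR}_{+}$); the paper instead gets this vanishing, and the finite length of $H^2_{\MM}(\bar{\cR})$, directly from the fact that $\bar{\cR}$ satisfies $(S_2)$ and is a quotient of a Cohen--Macaulay ring, so that it is Cohen--Macaulay on the punctured spectrum. Establishing finite length of $H^2_{\MM}(\bar{\cR})$ first is also what lets one propagate the vanishing $[H^2_{\MM}(\bar{\cR})]_{n+1}\twoheadrightarrow[H^2_{\MM}(\bar{\cR})]_n$ down from $n\gg 0$. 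Second, to convert the exact piece $[H^2_{\MM}(\bar{\cR})]_0\to[H^2_{\MM}(\bar{G})]_0\to[H^3_{\MM}(\mathcal{N})]_1$ into an isomorphism you must kill the last term; the paper does this using $a(\bar{\cR})=-1$, which gives $[H^3_{\MM}(\mathcal{N})]_1\cong[H^3_{\MM}(\bar{\cR})]_1=0$. Your sketch omits this input, and without it one only obtains an injection $[H^2_{\MM}(\bar{\cR})]_0\hookrightarrow[H^2_{\MM}(\bar{G})]_0$, which is not enough to conclude non-Cohen--Macaulayness from $[H^2_{\MM}(\bar{G})]_0\neq 0$.
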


\begin{proof} 
Note that $\bar{\cR}_{\MM}$ is a universally catenary domain which is a homomorphic image of a Cohen-Macaulay local ring. 
Hence it is an (FLC) because $\bar{\cR}$ satisfies Serre condition $(S_2)$. Thus $H_{\MM}^0(\bar{\cR})=  H_{\MM}^1(\bar{\cR})=0$ and
$H_{\MM}^2(\bar{\cR})$ has finite length. 
\par 
Put $\mathcal{N}=\bar{\cR}_{+}$.   
Then we obtain two exact sequences of graded $\bar{\cR}$-modules. 
\[
0 \to \mathcal{N} \to \bar{\cR} \to {}_h A \to 0, \; 
\]
\[
 0 \to \mathcal{N}(1) \to \bar{\cR} \to \bar{G} \to 0, \; 
\]
where ${}_h A$ can be regarded as $\bar{\cR}/\mathcal{N}$ 
which is concentrated in degree $0$. 
One can easily see that $H_{\MM}^0(\mathcal{N})=H_{\MM}^1(\mathcal{N})=0$, and we get 
\begin{equation} \label{firstEx}
0 \to H_{\MM}^2(\mathcal{N}) \to H_{\MM}^2(\bar{\cR}) \to 
{}_h H_{\m}^2(A) \to H_{\MM}^3(\mathcal{N}) \to H_{\MM}^3(\bar{\cR}) \to 0, \;  
\end{equation}
\begin{equation} \label{secondEx}
0 \to 
H_{\MM}^2(\mathcal{N})(1) \to H_{\MM}^2(\bar{\cR}) \to 
H_{\MM}^2(\overline{G}) \to H_{\MM}^3(\mathcal{N})(1) \to H_{\MM}^3(\bar{\cR}) \to 0. \;   
\end{equation}
For any integer $n \le -1$, the first exact sequence (\ref{firstEx}) 
yields 
\[
0\to [H_{\MM}^2(\mathcal{N})]_n \to [H_{\MM}^2(\bar{\cR})]_n \to 0. \;  
\] 
Also, the second exact sequence (\ref{secondEx}) yields 
\[
0=[H_{\MM}^1(\bar{G})]_n \to  [H_{\MM}^2(\mathcal{N})]_{n+1} \to [H_{\MM}^2(\bar{\cR})]_n.  \;  
\]  
Then 
$[H_{\MM}^2(\bar{\cR})]_{-1} \subset [H_{\MM}^2(\bar{\cR})]_{-2} 
 \subset \cdots \subset [H_{\MM}^2(\bar{\cR})]_{n}=0$ for $n \ll 0$ and thus $[H_{\MM}^2(\bar{\cR})]_{n}=0$ for all $n \le -1$. 
\par \vspace{2mm} 
For any integer $n \ge 1$, the first exact sequence (\ref{firstEx}) 
yields 
\[
0 \to [H_{\MM}^2(\mathcal{N})]_n \to [H_{\MM}^2(\bar{\cR})]_n \to 0. \; 
\] 
Moreover, as $a(\bar{G})=0$, we have 
\[
[H_{\MM}^2(\mathcal{N})]_{n+1} \to [H_{\MM}^2(\bar{\cR})]_n \to 
[H_{\MM}^2(\bar{G})]_n =0 \; \text{(ex)}.  
\]
Hence we get $[H_{\mathcal{M}}^2(\bar{\cR})]_n=0$ for all $n \ge 1$. 
\par 
Since $a(\bar{\cR})=-1$, we have 
$[H_{\MM}^3(\mathcal{N})]_1 \cong [H_{\MM}^3(\bar{\cR})]_1=0$. 
Hence we get 
\[
H_{\MM}^2(\bar{\cR})=
[H_{\MM}^2(\bar{\cR})]_0 \cong [H_{\MM}^2(\bar{G})]_0, 
\]
as required. \qed
\end{proof}

\begin{cor} \label{cohomology} 
Let $(A,\m)$ be a two-dimensional excellent normal local domain and let 
$I \subset A$ be an $\m$-primary integrally closed ideal. 
\par \noindent
Then $I$ is an elliptic ideal  if and only if 
$0 \ne H^2_{\MM}(\bar{\cR}) = [H^2_{\MM}(\bar{\cR})]_0 \hookrightarrow H_{\m}^2(A)$, where the last map is induced from the natural surjection 
$\bar{\cR} \to {}_h A = \bar{\cR}/\bar{\cR}_{+}$. 
\end{cor}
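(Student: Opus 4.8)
The plan is to read this as essentially a restatement of Proposition~\ref{H} together with one extra ingredient: the identification of the degree-zero component of $H^2_{\MM}(\bar{G})$ with $H^2_{\m}(A)$, and the tracking of the map through the natural surjection $\bar{\cR} \to {}_h A$. The forward direction ($I$ elliptic $\Rightarrow$ the displayed condition) is almost immediate from Proposition~\ref{H}, which already gives $H^2_{\MM}(\bar{\cR}) = [H^2_{\MM}(\bar{\cR})]_0 \cong [H^2_{\MM}(\bar{G})]_0$ and nonvanishing (since $\bar{\cR}$ is not Cohen--Macaulay when $I$ is elliptic). What remains is to show the isomorphism lands inside $H^2_{\m}(A)$ compatibly with the stated map, and that the composite is injective.

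First I would exploit the first exact sequence from the proof of Proposition~\ref{H},
\[
0 \to \mathcal{N} \to \bar{\cR} \to {}_h A \to 0,
\]
taken in degree $0$. Since ${}_h A$ is concentrated in degree $0$ and equals $A$ there, and $H^2_{\m}(A) = [{}_h H^2_{\m}(A)]_0$, the long exact cohomology sequence in degree zero reads
\[
[H^2_{\MM}(\mathcal{N})]_0 \to [H^2_{\MM}(\bar{\cR})]_0 \xrightarrow{\varphi} H^2_{\m}(A) \to [H^3_{\MM}(\mathcal{N})]_0,
\]
where $\varphi$ is precisely the map induced by $\bar{\cR} \to {}_h A$. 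The plan is to show $\varphi$ is injective on the degree-zero piece. Injectivity amounts to showing that the connecting map $[H^1_{\MM}({}_h A)]_0 \to [H^2_{\MM}(\mathcal{N})]_0$ is surjective, equivalently that the image of $[H^2_{\MM}(\mathcal{N})]_0$ in $[H^2_{\MM}(\bar{\cR})]_0$ is zero; but in degree $0$ the sequence $0 \to \mathcal{N} \to \bar{\cR} \to {}_h A \to 0$ splits the low-degree terms because $\mathcal{N}_0 = \overline{I^0}\cdot 0 = 0$ while $\bar{\cR}_0 = A = ({}_h A)_0$. I would use this to argue the relevant map out of $[H^2_{\MM}(\mathcal{N})]_0$ vanishes, giving injectivity of $\varphi$.

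For the converse, assume $0 \ne H^2_{\MM}(\bar{\cR}) = [H^2_{\MM}(\bar{\cR})]_0 \hookrightarrow H^2_{\m}(A)$. The concentration of $H^2_{\MM}(\bar{\cR})$ in a single degree, combined with $a(\bar{\cR}) = -1$, forces $\bar{G}$ to be Cohen--Macaulay with $a(\bar{G}) = 0$: one runs the second exact sequence of Proposition~\ref{H} backward. Concretely, from
\[
0 \to H^2_{\MM}(\mathcal{N})(1) \to H^2_{\MM}(\bar{\cR}) \to H^2_{\MM}(\bar{G}) \to H^3_{\MM}(\mathcal{N})(1) \to \cdots
\]
and the hypothesis that $H^2_{\MM}(\bar{\cR})$ lives only in degree $0$, one reads off that $[H^2_{\MM}(\bar{G})]_n = 0$ for all $n < 0$ and controls the top degree via the nonvanishing, yielding $a(\bar{G}) = 0$ and the $(S_2)$/Cohen--Macaulayness of $\bar{G}$; then Theorem~\ref{nr2}, equivalence (5), gives that $I$ is elliptic.

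The main obstacle I expect is the injectivity claim in the forward direction, namely pinning down that $\varphi$ is genuinely injective rather than merely that the two modules are abstractly isomorphic. The delicate point is that Proposition~\ref{H} produces the isomorphism $[H^2_{\MM}(\bar{\cR})]_0 \cong [H^2_{\MM}(\bar{G})]_0$ via the \emph{second} exact sequence (the one involving $\bar{G}$), whereas the corollary demands compatibility with the \emph{first} surjection $\bar{\cR} \to {}_h A$ (the one involving $A$). Reconciling these two maps — showing the composite $[H^2_{\MM}(\bar{\cR})]_0 \hookrightarrow H^2_{\m}(A)$ is injective and not just a zero map or a proper inclusion issue — requires carefully chasing both long exact sequences simultaneously and using $a(\bar{\cR}) = -1$ to kill the degree-$0$ part of $H^3_{\MM}(\mathcal{N})$ obstructing injectivity.
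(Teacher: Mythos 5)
Your overall architecture matches the paper's (terse) proof: the forward direction is extracted from the proof of Proposition \ref{H} together with Theorem \ref{nr2}, and the converse reverses the cohomology chase to recover that $\bar{G}$ is Cohen--Macaulay with $a(\bar{G})=0$ and then invokes Theorem \ref{nr2}(5). However, there is a genuine gap in your justification of the key injectivity step. You correctly reduce injectivity of $\varphi\colon [H^2_{\MM}(\bar{\cR})]_0 \to H^2_{\m}(A)$ to the vanishing of the image of $[H^2_{\MM}(\mathcal{N})]_0$ in $[H^2_{\MM}(\bar{\cR})]_0$, but your reason --- that $\mathcal{N}_0=0$ while $\bar{\cR}_0=({}_hA)_0$, so the sequence ``splits the low-degree terms'' --- is not valid: the degree-$0$ component of graded local cohomology of $\mathcal{N}$ is not controlled by $\mathcal{N}_0$ (e.g.\ $[H^1_{\MM}(R(-1))]_0\cong [H^1_{\MM}(R)]_{-1}$ can be nonzero even though $R(-1)_0=0$). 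Moreover, since $H^1_{\m}(A)=0$ ($A$ is a normal surface, hence Cohen--Macaulay), the first exact sequence gives an \emph{injection} $[H^2_{\MM}(\mathcal{N})]_0\hookrightarrow [H^2_{\MM}(\bar{\cR})]_0$, so what you must actually prove is $[H^2_{\MM}(\mathcal{N})]_0=0$. The correct route uses the \emph{second} exact sequence in degree $-1$: since $I$ elliptic gives $H^1_{\MM}(\bar{G})=0$ (Theorem \ref{nr2}(5)), one has
\[
0=[H^1_{\MM}(\bar{G})]_{-1}\to [H^2_{\MM}(\mathcal{N})(1)]_{-1}=[H^2_{\MM}(\mathcal{N})]_0 \to [H^2_{\MM}(\bar{\cR})]_{-1}=0,
\]
the last vanishing being established in the proof of Proposition \ref{H}. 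Hence $[H^2_{\MM}(\mathcal{N})]_0=0$ and $\varphi$ is injective.

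Two further remarks. First, your closing paragraph misidentifies the obstruction: the degree-$0$ part of $H^3_{\MM}(\mathcal{N})$ obstructs \emph{surjectivity} of $\varphi$ onto $H^2_{\m}(A)$ (which is not claimed and indeed fails, as $H^2_{\m}(A)$ has infinite length), not injectivity; it need not vanish and $a(\bar{\cR})=-1$ does not kill it. Second, in the converse you should make explicit where the injectivity hypothesis is used: it is precisely what forces $[H^2_{\MM}(\mathcal{N})]_0=0$ (its image in $[H^2_{\MM}(\bar{\cR})]_0$ is the kernel of $\varphi$), and combined with $[H^2_{\MM}(\mathcal{N})]_n\cong[H^2_{\MM}(\bar{\cR})]_n=0$ for $n\ne 0$ this gives $H^2_{\MM}(\mathcal{N})=0$, whence $H^1_{\MM}(\bar{G})\hookrightarrow H^2_{\MM}(\mathcal{N})(1)=0$ and $\bar{G}$ is Cohen--Macaulay; without the injectivity assumption the concentration of $H^2_{\MM}(\bar{\cR})$ in degree $0$ alone does not rule out $H^1_{\MM}(\bar{G})\ne 0$.
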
 

\begin{proof} Assume $I$ is an elliptic ideal, then from the proof of Proposition \ref{H} and Theorem \ref{nr2}  
we conclude our assertions. 
Conversely, by our assumption, we can conclude that 
$\bar{G}(I)$ is Cohen-Macaulay with $a(\bar{G}(I))=0$ 
by a similar argument as in the proof of Proposition \ref{H}. 
Hence  $I$ is an elliptic ideal by Theorem \ref{nr2}. \qed
\end{proof}  

\par 
For a cycle $C>0$ on $X$, we denote by $\chi(C)$ the Euler 
characteristic of $\cO_C$. 

\begin{defn} \label{fund}
Let $Z_f$ denote the \textit{fundamental cycle}, namely, 
the non-zero  minimal anti-nef cycle on $X$. 
The ring $A$ is called \textit{elliptic} if $\chi(Z_f)=0$. 
\end{defn}

\par 
The following result  follows from Theorem \ref{nr2} and  \cite{Ok}, 
Theorem $3.3$. 

\begin{cor} \label{elliptic}
If $A$ is an elliptic singularity, then for every integrally closed ideal $I \subset A$ the following facts hold:
\begin{enumerate}
\item  $\bar{G}(I)$ is Cohen-Macaulay with  $a(\bar{G}(I))\le 0$.
\item $I$ is elliptic or a $p_g$-ideal.
\end{enumerate}
Since there always exists an ideal $I$ with $q(I)=0$, we have 
$\bar r(A)=2$.  
\end{cor}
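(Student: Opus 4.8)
The plan is to combine Okuma's bound on the normal reduction number with the two classification theorems \ref{r=1} and \ref{nr2} by a simple dichotomy, and then to exhibit one elliptic ideal explicitly. First I would invoke \cite[Theorem 3.3]{Ok}: since $A$ is elliptic, $\br(I)\le 2$ for every integrally closed $\m$-primary ideal $I$. As $\br(I)\in\mathbb{Z}_{+}$ we always have $\br(I)\ge 1$, so for each such $I$ exactly one of $\br(I)=1$ or $\br(I)=2$ holds. This dichotomy is the engine of the whole argument.

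Next I would run the case analysis. If $\br(I)=1$, then Theorem \ref{r=1} (equivalence of (1) and (6)) says $I$ is a $p_g$-ideal and $\bar{G}(I)$ is Cohen-Macaulay with $a(\bar{G}(I))<0$. If $\br(I)=2$, then Theorem \ref{nr2} (equivalence of (1) and (5)) says $I$ is an elliptic ideal and $\bar{G}(I)$ is Cohen-Macaulay with $a(\bar{G}(I))=0$. In either case $\bar{G}(I)$ is Cohen-Macaulay and $a(\bar{G}(I))\le 0$, which is (1), and $I$ is a $p_g$-ideal or elliptic, which is (2). Thus (1) and (2) are immediate once the bound $\br(I)\le 2$ is in hand.

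For the final assertion $\bar{r}(A)=2$, the inequality $\bar{r}(A)\le 2$ is what we have just used, so it remains to produce an ideal with $\br(I)=2$. Here I would take the ideal $I$ with $q(I)=0$ whose existence is guaranteed (one represents it by a sufficiently large anti-nef cycle $Z$, so that $h^1(\mathcal{O}_X(-Z))=0$). Since $A$ is elliptic it is not rational, hence $p_g(A)\ge 1>0=q(I)$; and $q(I)=0$ forces $q(\infty I)=0$ by the monotonicity and non-negativity recorded in Proposition \ref{q(nI)}, so $q(I)=q(\infty I)$. Condition (2) of Theorem \ref{nr2} is then satisfied, giving $\br(I)=2$, whence $\bar{r}(A)=2$.

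The argument is essentially bookkeeping built on two external inputs, and the main obstacle is precisely to secure those inputs rather than anything in the assembly. The substantive geometric fact is Okuma's inequality $\br(I)\le 2$ for elliptic $A$, which rests on the structure of the fundamental cycle and is cited. The only other point that needs care is the existence of an ideal with $q(I)=0$: I would justify it by choosing an anti-nef cycle $Z$ with $-Z-K_X$ being $f$-nef and $f$-big and applying relative vanishing to $\mathcal{O}_X(-Z)=\omega_X\otimes\mathcal{O}_X(-Z-K_X)$, which forces $h^1(\mathcal{O}_X(-Z))=0$.
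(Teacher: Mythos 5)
Your proposal is correct and follows essentially the same route as the paper, which simply notes that the corollary ``follows from Theorem \ref{nr2} and \cite[Theorem 3.3]{Ok}'': Okuma's bound $\br(I)\le 2$ gives the dichotomy $\br(I)=1$ or $\br(I)=2$, Theorems \ref{r=1} and \ref{nr2} translate these into the $p_g$/elliptic alternative together with the Cohen--Macaulayness and $a(\bar{G}(I))\le 0$, and an ideal with $q(I)=0$ (which exists by the standard vanishing argument you sketch) realizes $\br(I)=2$. Your write-up merely makes explicit the bookkeeping and the vanishing input that the paper leaves implicit.
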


\par
The result above gives some evidence about a positive answer to the following question:
 
\begin{quest} \label{ell} Assume $\bar r(A) =2$, 
is it true that $A$ is an elliptic singularity?
\end{quest}

\par  
We can give a positive answer to Question \ref{ell} if 
$ \bar{e}_2(I) \le 1$ for all integrally closed $\m$-primary ideals.  
 In the following result we describe the integrally closed 
 $\m$-primary ideals satisfying this minimal condition.  

\begin{thm}\label{e_2=1}
Let $(A,\m)$ be a two-dimensional excellent normal local domain 
over an  algebraically closed field.  
Let $I \subset A$ be an $\m$-primary integrally closed ideal, 
and let $Q$ be a minimal reduction of $I$.   
Put $\bar{G}=\bar{G}(I)$ and $\bar{\cR}=\bar{\cR}(I)$. 
Then the following conditions are equivalent$:$
\begin{enumerate}
 \item $\bar{r}(I)=2$ and $\ell_A(\overline{I^2}/Q I) =1$.  
 \item $q(I)=q(\infty I)=p_g(A)-1$. 
 \item $\bar{e}_2(I) =1$.
 \item $\bar{e}_1(I) = e_0(I) -\ell_A( A/I) + 1$ and $\nr(I)=\br(I)$. 
 \item $\bar{G}$ is Cohen-Macaulay with $a(\bar{G})=0$ and 
 $\ell_A([H^2_{\MM}(\bar{G})]_0)=1$. 
\end{enumerate}
When this is the case,  $I$ is said to be a \textbf{strongly elliptic ideal} 
and $\bar{\cR}$  is a Buchsbaum  ring  with 
$\ell_A(H^2_{\MM}(\bar{\cR})) = 1$.   
\end{thm}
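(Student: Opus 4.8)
The plan is to treat condition (3), $\bar{e}_2(I)=1$, as the hub and route every other condition through it, exploiting the fact that $\bar{e}_2(I)=1>0$ already forces $I$ to be an elliptic ideal. Indeed, by Proposition~\ref{p:normalHP}(4) one has $\bar{e}_2(I)=p_g(A)-q(\infty I)$, so $\bar{e}_2(I)>0$ means $q(\infty I)<p_g(A)$; combined with Theorem~\ref{nr2} this puts us in the elliptic regime, where $\br(I)=2$, $q(I)=q(\infty I)$, the ring $\bar{G}$ is Cohen--Macaulay with $a(\bar{G})=0$, and the concluding identity $\ell_A(\overline{I^2}/QI)=\ell_A([H^2_{\MM}(\bar{G})]_0)=\bar{e}_2(I)$ of Theorem~\ref{nr2} is available. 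All of these facts will be used repeatedly.

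With this in hand the equivalences (1)$\Leftrightarrow$(2)$\Leftrightarrow$(3)$\Leftrightarrow$(5) become bookkeeping. First, (2)$\Leftrightarrow$(3): since $\bar{e}_2(I)=p_g(A)-q(\infty I)$, the equality $\bar{e}_2(I)=1$ is exactly $q(\infty I)=p_g(A)-1$, and once $I$ is elliptic Theorem~\ref{nr2}(2) upgrades this to $q(I)=q(\infty I)=p_g(A)-1$; conversely (2) gives $q(\infty I)=p_g(A)-1<p_g(A)$, hence $I$ is elliptic by Theorem~\ref{nr2} and $\bar{e}_2(I)=1$. Next, both (1)$\Leftrightarrow$(3) and (5)$\Leftrightarrow$(3) read off directly from the concluding identity of Theorem~\ref{nr2}: the hypothesis $\br(I)=2$ in (1) (resp.\ Cohen--Macaulayness with $a(\bar{G})=0$ in (5)) is precisely the elliptic condition, after which $\ell_A(\overline{I^2}/QI)=1$ (resp.\ $\ell_A([H^2_{\MM}(\bar{G})]_0)=1$) is synonymous with $\bar{e}_2(I)=1$; the reverse implications again use that $\bar{e}_2(I)=1$ makes $I$ elliptic.

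The equivalence with (4) is where the real content lies, and I expect it to be the main obstacle, because the first half of (4) alone does not pin down $\bar{e}_2(I)$. Using Proposition~\ref{p:normalHP}(3), the identity $\bar{e}_1(I)=e_0(I)-\ell_A(A/I)+1$ is equivalent to $q(I)=p_g(A)-1$, i.e.\ to $\Delta q(0)=1$ in the notation of the proof of Theorem~\ref{nr(I)}; but the non-increasing, non-negative sequence $\Delta q(n)$ could linger at the value $1$, producing $\bar{e}_2(I)=\sum_{n\ge0}\Delta q(n)>1$. The second half of (4), $\nr(I)=\br(I)$, is exactly what collapses this chain: once $\bar{e}_1(I)=e_0(I)-\ell_A(A/I)+1$ the bound of Theorem~\ref{nr(I)} reads $\nr(I)\le 2$, while $q(I)=p_g(A)-1<p_g(A)$ shows $I$ is not a $p_g$-ideal by Theorem~\ref{r=1}, so $\br(I)\ge 2$; therefore $\nr(I)=\br(I)$ forces $\nr(I)=\br(I)=2$, equality holds in Theorem~\ref{nr(I)}, and its final clause gives $\bar{e}_2(I)=r(r-1)/2=1$. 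For the converse (3)$\Rightarrow$(4) I would invoke (2) to obtain $\bar{e}_1(I)=e_0(I)-\ell_A(A/I)+1$, note that $I$ elliptic gives $\br(I)=2$, use the same bound to force $\nr(I)\le2$, and rule out $\nr(I)=1$ since that would mean $\overline{I^2}=QI$, contradicting $\ell_A(\overline{I^2}/QI)=\bar{e}_2(I)=1$; hence $\nr(I)=\br(I)=2$.

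Finally, for the concluding assertions I would feed the elliptic case into Proposition~\ref{H}: $\bar{\cR}$ is then (FLC) with $H^0_{\MM}(\bar{\cR})=H^1_{\MM}(\bar{\cR})=0$ and $H^2_{\MM}(\bar{\cR})=[H^2_{\MM}(\bar{\cR})]_0\cong[H^2_{\MM}(\bar{G})]_0$, whose length equals $\bar{e}_2(I)=1$ by the identity in Theorem~\ref{nr2}. A module of length one over $\bar{\cR}$ is isomorphic to $k$ and hence annihilated by $\MM$; since $\dim\bar{\cR}=3$ and the only nonzero local cohomology below the top, namely $H^2_{\MM}(\bar{\cR})$, is thus a $k$-vector space, the natural maps $\Ext^i_{\bar{\cR}}(k,\bar{\cR})\to H^i_{\MM}(\bar{\cR})$ are surjective for all $i<3$ (automatically, their targets being killed by $\MM$), so the St\"uckrad--Vogel criterion yields that $\bar{\cR}$ is Buchsbaum with $\ell_A(H^2_{\MM}(\bar{\cR}))=1$.
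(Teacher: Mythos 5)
Your architecture (condition (3) as hub, everything routed through Theorem \ref{nr2}) is sound, and your treatment of (4) is a genuinely different and arguably cleaner route than the paper's: you deduce $\nr(I)\le 2$ from the bound of Theorem \ref{nr(I)} and then invoke its equality case to get $\bar e_2(I)=r(r-1)/2=1$, whereas the paper instead uses the Huneke-type summation formulas $\bar e_2(I)=\sum n\,\ell_A(\overline{I^{n+1}}/Q\overline{I^n})$ and $\bar e_1(I)-\bar e_0(I)+\ell_A(A/I)=\sum \ell_A(\overline{I^{n+1}}/Q\overline{I^n})$ to isolate a unique nonzero summand. Both work. However, there is a real gap in the load-bearing claim of your first paragraph. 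You assert that $\bar e_2(I)=1>0$ forces $I$ to be elliptic because ``$\bar e_2(I)>0$ means $q(\infty I)<p_g(A)$; combined with Theorem \ref{nr2} this puts us in the elliptic regime.'' Theorem \ref{nr2}(2) requires $p_g(A)>q(I)=q(\infty I)$, and $q(\infty I)<p_g(A)$ alone does not give $q(I)=q(\infty I)$: the paper's Example \ref{e} has $\bar e_2(I)=g\ge 2>0$, $q(\infty I)=0<p_g(A)$, yet $q(I)=g-1$ and $\br(I)=g+1$, so the inference as you justify it is false. What is true, and what you need, is that $\bar e_2(I)=1$ (not merely $>0$) forces ellipticity, via the squeeze $p_g(A)-1=q(\infty I)\le q(I)\le p_g(A)$ together with ruling out $q(I)=p_g(A)$ (which would make $I$ a $p_g$-ideal with $\bar e_2(I)=0$ by Theorem \ref{r=1}). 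This is exactly the paper's argument for $(3)\Rightarrow(2)$, and since you lean on ``$\bar e_2(I)=1\Rightarrow$ elliptic'' for the reverse implications into (1), (2), and (5), this one-line repair must be made explicit.

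A second, smaller issue is the parenthetical in your Buchsbaum argument: the natural maps $\Ext^i_{\bar{\cR}}(k,\bar{\cR})\to H^i_{\MM}(\bar{\cR})$ are \emph{not} automatically surjective merely because their targets are killed by $\MM$ (a map into a nonzero $k$-vector space can be zero). The correct citation is the St\"uckrad--Vogel result that a module with only one nonvanishing local cohomology below the top is Buchsbaum as soon as that module is annihilated by the maximal ideal; with that in place your conclusion agrees with the paper's, which obtains $[H^2_{\MM}(\bar G)]_0\cong \overline{I^2}/QI\cong A/\m$ via \cite[Proposition 3.1]{MSV} and Proposition \ref{H} and then asserts Buchsbaumness on the same grounds.
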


\begin{proof}
$(1) \Longrightarrow (2):$ By Theorem \ref{nr2}, we have 
$p_g(A) > q(I)=q(\infty I)$.  
In particular, $q(2 I)=q(I)$. 
By Proposition \ref{eq:q(nI)}(1), $p_g(A)-q(I)=\ell_A(\overline{I^2}/QI)=1$.  
Conversely $(2) \Longrightarrow (1) $ again by Proposition \ref{eq:q(nI)}. 

\par \vspace{2mm} 
$(2) \Longrightarrow (3):$ 
By Proposition \ref{p:normalHP}(4), we have 
\[
\bar{e}_2(I)=p_g(A)-q(I)=1. 
\]

\par \vspace{2mm} 
$(3) \Longrightarrow (2):$ 
Since $p_g(A)-q(\infty I)=\bar{e}_2(I)=1$ by assumption, we have 
$p_g(A)-1 = q(\infty I) \le q(I) \le p_g(A)$. 
If $q(I)=p_g(A)$, then $I$ is a $p_g$-ideal and thus $\bar{e}_2(I)=0$. 
This is a contradiction. Hence $q(\infty I) = q(I)=p_g(A)-1$, as required. 

\par \vspace{2mm} 
$(1),(3) \Longrightarrow (4):$ 
It follows  from Theorem \ref{nr2} $(1)\Longrightarrow (3)$ and the fact that $1< \nr(I) \le \br(I)=2. $

\par \vspace{2mm} 
$(4) \Longrightarrow (1):$
By Proposition \ref{eq:q(nI)}(1), we have 
\begin{eqnarray*}
\ell_A(\overline{I^2}/QI)&=& (p_g(A)-q(I))-(q(I)-q(2I)), \\
\ell_A(\overline{I^3}/Q\overline{I^2})&=& (q(I)-q(2I))-(q(2I)-q(3I)), \\
\vdots \qquad ~  &=& ~ \qquad  \vdots 
\end{eqnarray*}
By a similar argument as in \cite{Hun} 
and Proposition \ref{p:normalHP}, we get
\begin{eqnarray*}
\bar{e}_2(I) &=& \sum_{n=1}^{\infty} n \cdot \ell_A(\overline{I^{n+1}}/Q\overline{I^n}), \\[2mm] 
\bar{e}_1(I)-\bar{e}_0(I)+\ell_A(A/I)
&=& \sum_{n=1}^{\infty} \ell_A(\overline{I^{n+1}}/Q\overline{I^n}). 
\end{eqnarray*}
Thus our assumption implies $\ell_A(\overline{I^{n+1}}/Q\overline{I^n})=1$ 
for some unique integer  $n \ge 1$. 
On the other hand, since $\nr(I)=\br(I)$, we must have $n=1$. 

\par \vspace{2mm} 
$(1) \Longrightarrow (5):$ 
Suppose (1). Then Theorem \ref{nr2}$(1)\Longrightarrow (5)$ 
implies that $\bar{G}$ is Cohen-Macaulay with $a(\bar{G})=0$.
\par
We remark that $\sqrt \MM = \sqrt {\bar G_+} $ in $\bar G, $ hence by \cite[Proposition 3.1]{MSV}, we have 
$[H_{\MM}^2(\bar G)]_0 \cong \overline{I^2}/QI \cong A/\m$
has length $1$. 
In particular by Proposition \ref{H},  $H_{\MM}^2(\bar{\cR})$ 
becomes a $A/\m$-vector space 
and thus $\bar{\cR}$ is Buchsbaum. 

\par \vspace{2mm} 
$(5) \Longrightarrow (1):$ 
By Theorem \ref{nr2}$(5) \Longrightarrow (1)$, we have $\br(I)=2$. 
Also, $\ell_A(\overline{I^2}/QI)=\ell_A([H_{\MM}^2(\bar{G})]_0)=1$. \qed
\end{proof}

\par
It is clear that if $I$ is a strongly elliptic ideal, then $I$ is an elliptic ideal. 
In some cases they are equivalent. 
Notice that the converse is \textit{not} true in general. 
For instance, let $A=k[[x^2,y^2,z^2,xy,xz,yz]]/(x^4+y^4+z^4)$. 
Then $A$ is a $2$-dimensional normal local domain with the maximal ideal 
$\m=(x^2,y^2,z^2,xy,xz,yz)$. Then $\m$ is a normal ideal and 
$Q=(x^2,y^2)$ is a minimal reduction of $\m$ with $\m^3=Q\m^2$. 
Moreover, $\br(\m)=r(\m)=2$ and $\ell_A(\m^2/Q\m)=3$ imply 
that $\m$ is an elliptic ideal but not a strongly elliptic ideal. 

Notice that (1) is equivalent to (3) follows also from \cite{It3}.

\begin{prop} \label{str} Let $(A,\m)$ be a two-dimensional Gorenstein 
excellent normal local domain. Then $\m$ is an elliptic ideal  if and only if 
$\m$ is a strongly elliptic ideal. 
\end{prop}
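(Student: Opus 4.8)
The plan is to prove the only nontrivial implication: that if $\m$ is an elliptic ideal then it is strongly elliptic (the converse is immediate, every strongly elliptic ideal being elliptic by definition). By Theorem~\ref{e_2=1} this amounts to showing $\bar{e}_2(\m)=1$. Since $\m$ is assumed elliptic, Theorem~\ref{nr2} already gives $\br(\m)=2$ and
\[
\bar{e}_2(\m)=\ell_A\!\left(\overline{\m^2}/Q\m\right)>0,
\]
where $Q$ is a minimal reduction of $\m$, so the entire content of the statement is the upper bound $\ell_A(\overline{\m^2}/Q\m)\le 1$.

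My strategy is to realize $\overline{\m^2}/Q\m$ as an ideal of the Artinian Gorenstein ring $\bar{A}:=A/Q$ that is contained in its socle. First, choose $Q=(a,b)$; as $\dim A=2$ and $A$ is Cohen--Macaulay, $a,b$ is a regular sequence, so $\bar{A}$ is a Gorenstein Artinian ring and its socle $(0:_{\bar{A}}\m)$ is one-dimensional over $k$. Next, the Huneke--Itoh identity $\overline{\m^2}\cap Q=Q\m$ (the same input used in the proof of Theorem~\ref{nr2}) furnishes a $k$-linear isomorphism
\[
\overline{\m^2}/Q\m=\overline{\m^2}/(\overline{\m^2}\cap Q)\;\cong\;(\overline{\m^2}+Q)/Q\subseteq \bar{A}.
\]
It then remains only to show that this ideal lies in the socle, i.e.\ that $\m\,\overline{\m^2}\subseteq Q$.

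This last inclusion is where the hypothesis $\br(\m)=2$ enters, and I expect it to be the only point requiring care. Because $\m$ is elliptic one has $\overline{\m^3}=Q\overline{\m^2}$, and since $\m$ is integrally closed $\overline{\m^2}\subseteq\overline{\m}=\m$ together with multiplicativity of the normal filtration gives
\[
\m\,\overline{\m^2}\subseteq\overline{\m^3}=Q\overline{\m^2}\subseteq Q .
\]
Thus $(\overline{\m^2}+Q)/Q\subseteq(0:_{\bar{A}}\m)$, whence $\bar{e}_2(\m)=\dim_k(\overline{\m^2}+Q)/Q\le 1$; combined with $\bar{e}_2(\m)>0$ this forces $\bar{e}_2(\m)=1$, so $\m$ is strongly elliptic by Theorem~\ref{e_2=1}. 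The main obstacle is precisely this socle inclusion: once one recognizes that the Gorenstein reduction $Q$ absorbs $\overline{\m^3}$, so that $\overline{\m^2}/Q\m$ is a $k$-vector space sitting inside the one-dimensional socle of $A/Q$, the bound $\bar{e}_2(\m)\le 1$ is forced. Everything else is the formal Huneke--Itoh identification and the one-dimensionality of the socle of an Artinian Gorenstein ring.
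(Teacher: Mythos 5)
Your proposal is correct and follows essentially the same route as the paper: both arguments use $\br(\m)=2$ to get $\m\,\overline{\m^2}\subseteq\overline{\m^3}=Q\overline{\m^2}\subseteq Q$, the Huneke--Itoh identity $\overline{\m^2}\cap Q=Q\m$ to embed $\overline{\m^2}/Q\m$ into $(Q:\m)/Q$, and the one-dimensionality of the socle of the Artinian Gorenstein ring $A/Q$ to conclude $\ell_A(\overline{\m^2}/Q\m)=1$. Your write-up is in fact slightly more explicit than the paper's, in that you spell out why the length is positive (ellipticity gives $\bar e_2(\m)>0$) rather than leaving it implicit.
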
 

\begin{proof} 
Assume $\m$ is an elliptic ideal  and $Q$ be
its minimal reduction. Since $\bar{r}(\m)=2$, 
$\m \overline{\m^2} \subset Q$ and we have 
$\overline{\m^2}/Q\m  \cong (\overline{\m^2}+Q)/Q 
\hookrightarrow A/Q$, whose image is contained in $(Q:\m)/Q$.
Since the latter has length $1$, $\ell_A (\overline{\m^2}/Q\m) =1$
and $\m$ is strongly elliptic.
\end{proof}

\begin{exam}  \label{Brieskorn}
Let $A=\mathbb{C}[[x,y,z]]/(x^a+y^b+z^c)$ be a Brieskorn hypersurface, 
where $2 \le a \le b \le c$.  
Then:  
\begin{enumerate}
\item $\m$ is a $p_g$-ideal if and only if $(a,b)=(2,2),(2,3)$. 
\item $\m$ is an elliptic ideal (equivalently strongly elliptic) if and only if 
\[
(a,b)=(2,4),(2,5),(3,3),(3,4). 
\]
\end{enumerate}
In particular, if $p \ge 1$ and $(a,b,c)=(2,4,4p+1)$, then $p_g(A)=p$ and 
$\m$ is a (strongly) elliptic ideal. 
It follows from \cite[Theorem 3.1, Proposition 3.8]{OWY4}. 
\end{exam} 

\begin{exam} 
Proposition \ref{str} does not hold if $I \neq  \m$. 
Let $A$ be any two-dimensional excellent normal local domain
with $p_g(A) > 1$.  Then there exist 
always integrally closed ideals $I$ with $q(I)=0$.  
Since $q(I)=q(2I)=0$, $\bar{r}(I) =2$ and $\bar{e}_2(I) = p_g(A)$. 
Thus \ref{str} does not hold for such $I$. 
\end{exam} 

\par 
We recall that an excellent normal local domain for which every integrally closed $\m$-primary ideal is a $p_g$-ideal,  is a rational singularity ($p_g(A)=0$). This result suggests to study the next step.

\begin{defn}[\textrm{e.g. \cite{Y}}] An excellent normal 
local domain $A$ is a   {\it{strongly elliptic singularity}} if  $p_g(A)=1$.  
\end{defn} 

\par
Note that any strong elliptic singularity is an elliptic singularity. 
The following result characterizes algebraically  the strongly 
 elliptic singularities. 

\begin{thm} \label{pg=1} Let $(A,\m)$ be a two-dimensional excellent 
normal local domain containing an algebraically closed field $k= A/\m$ 
 and assume that $p_g(A)>0$. 
The following facts are equivalent:
\begin{enumerate}
\item $A$ is a strongly elliptic singularity. 
\item Every integrally closed ideal of $A$ is 
either a $p_g$-ideal or  a  strongly  elliptic ideal.
\end{enumerate}
\end{thm}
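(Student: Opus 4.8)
The plan is to reduce both implications to the single numerical invariant $q(I)$, using the characterization of $p_g$-ideals in Theorem \ref{r=1} (namely $q(I)=p_g(A)$) and the characterization of strongly elliptic ideals in Theorem \ref{e_2=1} (namely $q(I)=q(\infty I)=p_g(A)-1$, equivalently $\bar{e}_2(I)=1$). Once everything is phrased in terms of $q(I)$ and $q(\infty I)$, both directions become essentially formal, with the monotonicity and boundedness of Proposition \ref{q(nI)} doing the work.

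For $(1)\Rightarrow(2)$, I would assume $p_g(A)=1$ and take an arbitrary integrally closed $\m$-primary ideal $I$. By Proposition \ref{q(nI)}(1) we have $0\le q(I)\le p_g(A)=1$, so $q(I)\in\{0,1\}$. If $q(I)=1=p_g(A)$, then $I$ is a $p_g$-ideal by Theorem \ref{r=1}. If $q(I)=0$, then the monotonicity $0\le q(\infty I)\le q(I)=0$ of Proposition \ref{q(nI)}(2) forces $q(\infty I)=0=p_g(A)-1$, and condition $(2)$ of Theorem \ref{e_2=1} shows that $I$ is a strongly elliptic ideal. This yields the required dichotomy with no computation.

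For $(2)\Rightarrow(1)$, I would invoke the existence of an integrally closed $\m$-primary ideal $I$ with $q(I)=0$, the same fact already used in Corollary \ref{elliptic}; for such an ideal $q(\infty I)=0$ as well. By hypothesis $(2)$, this $I$ is either a $p_g$-ideal or a strongly elliptic ideal. The first possibility is excluded: it would give $q(I)=p_g(A)$, hence $p_g(A)=0$, contradicting the standing assumption $p_g(A)>0$. Therefore $I$ is strongly elliptic, and Proposition \ref{p:normalHP}(4) (or equivalently Theorem \ref{e_2=1}(3)) gives $1=\bar{e}_2(I)=p_g(A)-q(\infty I)=p_g(A)$. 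Thus $A$ is a strongly elliptic singularity.

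The only genuinely nontrivial ingredient is the existence of an ideal with $q(I)=0$, which is the geometric heart of the statement: on a resolution $X\to\Spec A$ one chooses a sufficiently large anti-nef cycle $Z$ so that $h^1(\mathcal{O}_X(-Z))=0$ and sets $I=I_Z$. Since this existence is already established and repeatedly used earlier in the paper, I expect the remaining argument to be pure bookkeeping with the invariant $q$, and I anticipate no further obstacle.
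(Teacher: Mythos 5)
Your proof is correct and follows the same route as the paper: the paper's (very terse) proof likewise rests on the existence of an integrally closed $\m$-primary ideal with $q(I)=0$, from which $\bar e_2(I)=p_g(A)$, combined with the $q$-characterizations of $p_g$-ideals and strongly elliptic ideals. Your write-up simply spells out the bookkeeping that the paper leaves implicit.
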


\begin{proof} 
It depends by the fact that always there exists an integrally closed ideal $I$ of $A$  such that $q(I)=0$.
Thus  $p_g(A)= \bar{e}_2(I)$.  \qed
\end{proof} 

\par
If $A$ is a rational singularity, then every integrally closed 
$\m$-primary ideal is normal. 
This is not  true if $A$ is an elliptic singularity, 
even if we assume $A$ is a strongly  elliptic singularity. 
 
\begin{ex} \label{exnormal}   
\begin{enumerate}
\item Let $A = k[ X,Y,Z]/ (X^3 + Y^3 + Z^3), $ then $A$ is Gorenstein, 
$p_g(A) = 1$ and the maximal ideal $\m$ is normal.  
If we consider  $I = (x,y, z^2), $ then  $I^2$  is not normal.
 \item Cutkosky showed that if 
$A= \mathbb{Q}[[ X,Y,Z]]/ (X^3 +3 Y^3 +9 Z^3)$,  
($\mathbb{Q}$  rational numbers),  
then for every integrally closed ideal $I \subset A$, 
$I^2 $ is also integrally closed and hence normal.  
This is because the elliptic curve does not have 
any $\mathbb{Q}$-rational point.
\item Let $A=k[x,y,z]/(x^2+y^4+z^4)$, $I=\m=(x,y,z)$, $Q=(y,z)$.  
Then $p_g(A)=1$ and $ \overline{\m^n}= x(y,z)^{n-2}+\m^n$  
for every $n \ge 2$. 
\end{enumerate}
\end{ex}

\begin{prop} \label{normal} 
Let $(A,\m)$ be a two-dimensional excellent normal local domain 
containing an algebraically closed field $k= A/\m. $
Assume that $I$ is a strongly elliptic ideal. 
Then the following conditions are equivalent$:$
\begin{enumerate}
\item $\overline{I^2}=I^2$. 
\item $\overline{I^n}=I^n$ for some $n \ge 2$. 
\item $\overline{I^n}=I^n$ for every $n \ge 2$. 
\end{enumerate}
\end{prop}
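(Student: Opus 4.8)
The plan is to prove the cyclic chain $(1)\Rightarrow(3)\Rightarrow(2)\Rightarrow(1)$, where the first two implications are short and the last carries all the content. The running setup is this: since $I$ is strongly elliptic, Theorem \ref{e_2=1} gives $\br(I)=2$ and $\ell_A(\overline{I^2}/QI)=1$ for a minimal reduction $Q=(a,b)$ of $I$. Because $QI\subseteq I^2\subseteq\overline{I^2}$ and the outer quotient has length one, exactly one of $I^2=QI$ or $I^2=\overline{I^2}$ holds; hence condition (1) is equivalent to $I^2\neq QI$. Note also that $A$, being a two-dimensional normal local domain, is Cohen--Macaulay, so $Q$ is generated by the regular sequence $a,b$.

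For $(1)\Rightarrow(3)$ I would induct on $n\ge 2$, the case $n=2$ being the hypothesis. Since $\br(I)=2$ yields $\overline{I^{n+1}}=Q\overline{I^n}$ for every $n\ge 2$, the inductive hypothesis $\overline{I^n}=I^n$ gives $\overline{I^{n+1}}=Q\overline{I^n}=QI^n\subseteq I^{n+1}\subseteq\overline{I^{n+1}}$, forcing equality throughout. The implication $(3)\Rightarrow(2)$ is trivial.

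The heart of the matter is $(2)\Rightarrow(1)$, which I would prove by contraposition: assuming $I^2=QI$, I will show $\overline{I^n}\supsetneq I^n$ for every $n\ge 2$, which negates (2). The hypothesis $I^2=QI$ says the $I$-adic reduction number of $I$ relative to $Q$ is one, so $I^n=Q^{n-1}I$ for all $n\ge 1$; the Valabrega--Valla equality $Q\cap I^n=Q^{n-1}I=QI^{n-1}$ is then immediate (as $Q^{n-1}I\subseteq Q$), whence the ordinary associated graded ring $G(I)$ is Cohen--Macaulay, $e_1(I)=e_0(I)-\ell_A(A/I)$, $e_2(I)=0$, and $\ell_A(A/I^n)=e_0(I)\binom{n}{2}+\ell_A(A/I)\,n$ for all $n\ge 0$. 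On the normal side, since $I$ is elliptic, Theorem \ref{nr2}(4) gives $\ell_A(A/\overline{I^n})=\bar{P}_I(n-1)$ for all $n\ge 1$, and by Proposition \ref{p:normalHP} together with Theorem \ref{e_2=1} the normal coefficients are $\bar{e}_0(I)=e_0(I)$, $\bar{e}_1(I)=e_0(I)-\ell_A(A/I)+1$, and $\bar{e}_2(I)=1$. Substituting these into $\bar{P}_I(n-1)$ and subtracting, every quadratic and constant term cancels and one linear term survives, giving $\ell_A(\overline{I^n}/I^n)=\ell_A(A/I^n)-\ell_A(A/\overline{I^n})=n-1$ for all $n\ge 1$. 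This is positive for every $n\ge 2$, so $\overline{I^n}\neq I^n$, completing the contrapositive.

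The step I expect to require the most care is the $I$-adic side of the length computation: one must be certain that $I^2=QI$ genuinely forces $G(I)$ to be Cohen--Macaulay (so that the Hilbert function equals the Hilbert polynomial from degree zero and $e_2(I)=0$), and then track the coefficients so that the cancellation against $\bar{P}_I(n-1)$ is exact. As a backup that avoids the ordinary Hilbert polynomial altogether, I would instead show directly that multiplication by a general element $a\in Q$ induces an injection $\overline{I^n}/I^n\hookrightarrow\overline{I^{n+1}}/I^{n+1}$, using that $a^{*}$ is a nonzerodivisor in both $G(I)$ and $\bar{G}(I)$; the lengths are then nondecreasing, and since $\ell_A(\overline{I^2}/I^2)=1$ one again concludes $\overline{I^n}\neq I^n$ for all $n\ge 2$. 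I would keep this injectivity argument in reserve in case the coefficient bookkeeping proves delicate.
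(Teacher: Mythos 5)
Your proposal is correct and follows essentially the same route as the paper: both reduce $(2)\Rightarrow(1)$ to the contrapositive, observe that $\ell_A(\overline{I^2}/QI)=1$ forces $I^2=QI$ when $I^2\ne\overline{I^2}$, invoke Valabrega--Valla to get $G(I)$ Cohen--Macaulay with $e_1(I)=e_0(I)-\ell_A(A/I)$, and compare the ordinary and normal Hilbert polynomials to find $\ell_A(\overline{I^{n}}/I^{n})=n-1>0$ for $n\ge 2$. The reserve argument via injectivity of multiplication by $a^{*}$ is a reasonable alternative but is not needed.
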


\begin{proof} By Theorem  \ref{e_2=1} (1), we have 
$\ell_A(\overline{I^2}/QI)=1$ and 
$\overline{I^n} = Q \overline{I^{n-1}}$ for $n\ge 3$. 
Hence if $I^2 =  \overline{I^2}$, then 
$I^n =  \overline{I^n}$ for all $n\ge 2$. \par
\par 
Conversely, assume that $I^2 \ne \overline{I^2}$. 
Since $\ell_A(\overline{I^2}/QI)=1$, we should have 
$I^2 = QI$. This implies that $G(I) := 
\oplus_{n\ge 0} I^n/ I^{n+1}$ is Cohen-Macaulay with 
$a(G(I)) = -1$   ([\cite{VV}, Proposition 2.6] and \cite{GS}) and hence 
\[
\ell_A( A/ I^{n+1}) =  
 e_0(I) \genfrac{(}{)}{0pt}{0}{n+2}{2}
- e_1(I)\genfrac{(}{)}{0pt}{0}{n+1}{1} 
\] 
with $e_0(I) = \bar e_0(I)$ and $e_1(I) = e_0(I) - \ell_A(A/I)$.
\par 
On the other hand, by Theorem \ref{nr2} and Corollary $\ref{e_2=1}$, we have   
\begin{eqnarray*}
\ell_A(A/\overline{I^{n+1}})&=& 
\bar{P}_I(n)=\overline{e}_0(I){n+2 \choose 2} - \overline{e}_1(I){n+1 \choose 1} + \overline{e}_2(I) \\
&=& e_0(I){n+2 \choose 2} - (e_0(I)-\ell_A(A/I)+1){n+1 \choose 1} +1 \\
&=& \ell_A(A/I^{n+1})-n.
\end{eqnarray*}
This implies that $I^n \ne \overline{I^n}$ for all $n\ge 2$. \qed
\end{proof}

\par  
We can characterize the normal ideals in a strongly elliptic singularity. 
Before showing the results, let us recall some definitions and basic facts on cycles and a vanishing theorem for elliptic singularities.
In the following, $A$ is an elliptic singularity 
and $X$ is a resolution of  $\Spec(A)$. 
\par
For a cycle $C>0$ on $X$, we denote by $\chi(C)$ 
the Euler characteristic $\chi(\cO_C)=h^0(\cO_C)-h^1(\cO_C)$. 
Then $p_a(C):=1-\chi(C)$ is called the {\em arithmetic genus} of $C$.
By the Riemann-Roch theorem, we have $\chi(C)=-(K_X+C)C/2$, where $K_X$ is the canonical divisor on $X$. 
From this, if $C_1, C_2>0$ are cycles, we have $\chi(C_1+C_2)=\chi(C_1)+\chi(C_2)-C_1C_2$. 
From the exact sequence 
\[
0\to \cO_{C_2}(-C_1) \to \cO_{C_1+C_2} \to \cO_{C_1} \to 0
\]
we have $\chi(\cO_{C_2}(-C_1))=-C_1C_2+\chi(C_2)$. 
\par
If $A$ is elliptic, then there exists a unique cycle $E_{min}$, 
called the {\em minimally elliptic cycle}, such that $\chi(E_{min})=0$ and
$\chi(C)>0$ for all cycles $0<C<E_{min}$ (see \cite{La}).
Moreover we have the following 
(see \cite[Proposition 3.1, Proposition 3.2, Corollary 4.2]{La}, \cite[p.428]{W}, \cite[(6.4), (6.5)]{T}).

\begin{prop} \label{p:ellchi}
Assume that $A$ is elliptic. 
Then $\chi(C)\ge 0$ for any cycle $C>0$ on $X$ and 
$C\ge E_{min}$ if $\chi(C)=0$.
\end{prop}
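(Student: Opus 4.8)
My plan rests on the elementary fact that $\chi$ is, up to sign, a negative-definite quadratic function on the lattice of cycles. From the additivity formula $\chi(C_1+C_2)=\chi(C_1)+\chi(C_2)-C_1C_2$ one obtains, for any $C,D>0$ with componentwise minimum $m=\min(C,D)$ and maximum $M=\max(C,D)$, the \emph{min--max inequality}
\[
\chi(m)+\chi(M)=\chi(C)+\chi(D)-(C-m)(D-m)\le\chi(C)+\chi(D),
\]
where the last step uses that $C-m$ and $D-m$ are effective with disjoint support, so $(C-m)(D-m)\ge0$. A second tool is the \emph{monotonicity lemma}: if $C\cdot E_j>0$ for a component $E_j$, then
\[
\chi(C+E_j)-\chi(C)=\chi(E_j)-C\cdot E_j=(1-p_a(E_j))-C\cdot E_j\le0,
\]
since $C\cdot E_j\ge1$ and $p_a(E_j)\ge0$. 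Running Laufer's computation sequence, this shows that every $C>0$ is dominated by an anti-nef cycle $W\ge C$ with $\chi(W)\le\chi(C)$, and in particular (building up $Z_f$ from a single component) that $p_a(E_i)\le p_a(Z_f)=1$ for each exceptional component.

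For the non-negativity $\chi(C)\ge0$, which is the heart of the matter, the plan is to study a cycle $C_0>0$ minimizing $\chi$; such a minimizer exists because negative definiteness of the intersection form forces $\chi\to+\infty$. Minimality gives $C_0\cdot E_i\le\chi(E_i)\le1$ for all $i$. Applying the min--max inequality with $D=E_{min}$ and using Laufer's strict inequality $\chi(C')>0$ for $0<C'<E_{min}$ rules out $0<\min(C_0,E_{min})<E_{min}$ (it would force $\chi(\min)\le0$), while the uniqueness of $E_{min}$ rules out $\min(C_0,E_{min})=0$; hence $C_0\ge E_{min}$. It then remains to propagate $\chi(E_{min})=0$ upward, i.e. to prove $\chi(C_0)=0$, equivalently that the minimum of $\chi$ over positive cycles is $0$. \textbf{This last step is the main obstacle}: a naive induction on the sum of the coefficients is not well-founded, since the minima of $\chi$ sit at large, nearly anti-nef cycles, and the reduction to smaller cycles loops back through larger ones. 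I would close it by a computation-sequence argument running from $E_{min}$ up to $C_0$, with the steps controlled by the anti-nef property of $Z_f$ and the monotonicity lemma; this is precisely the content of Wagreich's theorem that $p_a(Z_f)=\max_{C>0}p_a(C)$.

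For the second assertion I would argue by contradiction assuming $\chi(C)=0$ and $C\not\ge E_{min}$, and set $m=\min(C,E_{min})<E_{min}$. If $0<m<E_{min}$, then $\chi(m)>0$ by Laufer, whereas the min--max inequality combined with $\chi(\max(C,E_{min}))\ge0$ (the non-negativity just established) gives $\chi(m)\le\chi(C)+\chi(E_{min})-\chi(\max)=-\chi(\max)\le0$, a contradiction. The remaining case $m=0$ forces $C\cdot E_{min}=0$ (apply non-negativity to $C+E_{min}$), producing a cycle with $\chi=0$ whose support is disjoint from $E_{min}$; taking a minimal such cycle inside its support would yield a second minimally elliptic cycle, contradicting the uniqueness of $E_{min}$. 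Hence $C\ge E_{min}$, completing the proof.
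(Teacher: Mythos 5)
You should first be aware that the paper does not prove this proposition: it is quoted from the literature, with the sentence immediately preceding it pointing to Laufer (Propositions 3.1, 3.2, Corollary 4.2), Wagreich (p.~428) and Tomari ((6.4), (6.5)). So the comparison here is between your sketch and those classical arguments. The preparatory parts of your sketch are correct and standard: the min--max inequality $\chi(\min)+\chi(\max)\le\chi(C)+\chi(D)$, the monotonicity of $\chi$ along a computation sequence, the consequence $p_a(E_i)\le p_a(Z_f)=1$, and the reduction of the second assertion to the first (once $\chi\ge 0$ is known, the case $\min(C,E_{min})=0$ is correctly excluded by manufacturing a second minimally elliptic cycle and invoking uniqueness).

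The genuine gap is exactly where you flag it, and it is not a removable one as written: the first assertion, $\chi(C)\ge 0$ for every $C>0$, is the entire nontrivial content of the proposition, and your argument for it is circular. You reduce it to showing that a global minimizer $C_0$ of $\chi$ satisfies $\chi(C_0)=0$ and then close that step by citing ``Wagreich's theorem that $p_a(Z_f)=\max_{C>0}p_a(C)$'' --- but in the elliptic case that theorem \emph{is} the statement $\chi(C)\ge\chi(Z_f)=0$ being proved. Moreover the mechanism you propose points the wrong way: the monotonicity lemma only says that $\chi$ does not \emph{increase} as one climbs a computation sequence, whereas propagating $\chi(E_{min})=0$ upward to $C_0=E_{min}+D$ requires the lower bound $\chi(D)\ge E_{min}D$, which none of your lemmas supply (minimality of $C_0$ gives $\chi(D)\ge\chi(C_0)$, which feeds back into itself). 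There is also a secondary flaw in the same half: you rule out $\min(C_0,E_{min})=0$ by ``uniqueness of $E_{min}$,'' but before non-negativity is established a disjoint cycle could a priori have $\chi<0$, and a minimal such cycle is not a minimally elliptic cycle in the sense of the definition, so uniqueness does not apply. The honest options are to cite Laufer's Proposition 3.2 for non-negativity outright (as the paper does, which makes the rest of the construction unnecessary) or to reproduce its actual inductive argument on a cycle minimal with $\chi<0$; either way, the central step is currently missing from your proof.
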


\par 
Let us recall that the fundamental cycle $Z_f$ can be computed 
via a sequence of cycles:
\[
C_0:=0, \quad C_1=E_{j_1}, \quad C_i=C_{i-1}+E_{j_i}, \quad C_m=Z_f,
\]
where $E_{j_1}$ is an arbitrary component of $E$ and $C_{i-1}E_{j_i}>0$ 
for $2\le i \le m$.
Such a sequence $\{C_i\}$ is called a  
 {\em computation sequence} for $Z_f$. 
 It is known that $h^0(\cO_{C_i})=1$ for $1 \le i\le m$ 
 (see \cite[p.1260]{La}).

\par
The following vanishing theorems are essential in our argument.

\begin{thm}[R{\"o}hr {\cite[1.7]{Rh}}] \label{t:rohr}
Let $L$ be a divisor on $X$ such that $LC>-2\chi(C)$ 
for every cycle $C>0$ which occurs in a computation sequence 
for $Z_f$.
Then $H^1(\cO_X(L))=0$.
If $A$ is rational, then the converse holds, too.
\end{thm}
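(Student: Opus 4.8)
The plan is to reduce the vanishing on the open surface $X$ to a vanishing statement on finite infinitesimal neighborhoods of the exceptional set, and then to run an induction along a computation sequence. Since the assertion is local, I would first complete $A$ and invoke the theorem on formal functions to write $H^1(\cO_X(L))=\varprojlim_n H^1(\cO_{nE}(L))$, where $E$ is the reduced exceptional divisor. It therefore suffices to prove $H^1(\cO_D(L))=0$ for a cofinal family of effective cycles $D>0$ supported on all of $E$; the inverse limit then vanishes.

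Fix such a $D$ and choose a computation sequence $0=D_0<D_1<\cdots<D_N=D$ whose initial segment $D_0<\cdots<D_m=Z_f$ is the given computation sequence for $Z_f$, each step being $D_t=D_{t-1}+E_{k_t}$ with $D_{t-1}E_{k_t}>0$ for $t\ge 2$. At every step I would tensor the exact sequence $0\to\cO_{E_{k_t}}(-D_{t-1})\to\cO_{D_t}\to\cO_{D_{t-1}}\to 0$ with $\cO_X(L)$ and read off
\[
H^1(\cO_{E_{k_t}}(L-D_{t-1}))\to H^1(\cO_{D_t}(L))\to H^1(\cO_{D_{t-1}}(L))\to 0.
\]
Because each $E_{k_t}$ is a smooth projective curve, the left-hand group vanishes as soon as $\deg_{E_{k_t}}(L-D_{t-1})=(L-D_{t-1})E_{k_t}>-2\chi(E_{k_t})$, which by adjunction is exactly $LE_{k_t}>(K_X+D_t)E_{k_t}$. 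Granting these per-step bounds, induction on $t$ gives $H^1(\cO_{D_t}(L))=0$ for all $t$, hence $H^1(\cO_D(L))=0$, and the reduction of the first paragraph finishes the vanishing.

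The hard part is deducing the per-step inequalities $LE_{k_t}>(K_X+D_t)E_{k_t}$ from the per-cycle hypotheses $LC>-2\chi(C)$, which are phrased for the cumulative cycles $C=D_i$ rather than for the individual components. Setting $a_i:=LD_i+2\chi(D_i)>0$ and using $\chi(D_i)-\chi(D_{i-1})=\chi(E_{k_i})-D_{i-1}E_{k_i}$, the telescoping identity $a_i-a_{i-1}=LE_{k_i}+2\chi(E_{k_i})-2D_{i-1}E_{k_i}$ shows the required bound is equivalent to $(a_i-a_{i-1})+D_{i-1}E_{k_i}>0$; this is immediate for the first step and for every step with $a_i\ge a_{i-1}$, while in the remaining cases it must be extracted from $D_{i-1}E_{k_i}>0$ together with $\chi(C)\ge 0$ for all cycles (Proposition \ref{p:ellchi}). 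For the extension beyond $Z_f$ I would exploit that $Z_f$ is anti-nef, so that adding copies of $Z_f$ only improves the intersection numbers against $L$, allowing the computation sequence to be continued to arbitrarily large $D$ while preserving the inequalities; this is where the elliptic hypothesis and the minimality of $Z_f$ genuinely enter, and organizing this propagation cleanly is the crux of the argument.

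For the converse under the rationality hypothesis I would argue contrapositively: if $LD_i\le -2\chi(D_i)$ for some cycle $D_i$ in the computation sequence, then the Riemann--Roch computation $\chi(\cO_{D_i}(L))=LD_i+\chi(D_i)$ combined with a bound on $h^0$ forces $h^1(\cO_{D_i}(L))>0$, and rationality (which forces $\chi(C)\ge 1$ for every cycle and makes the restriction maps on sections surjective) lets this nonzero class be detected in $H^1(\cO_X(L))$, contradicting the vanishing.
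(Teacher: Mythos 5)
The paper does not actually prove this statement: it is quoted verbatim from R\"ohr \cite[1.7]{Rh} and used as a black box, so there is no internal proof to compare against. Judged on its own terms, your proposal sets up a reasonable frame (formal functions, induction along a computation sequence, and a Riemann--Roch contrapositive for the rational case, which is essentially correct since $H^1(\cO_X(L))\to H^1(\cO_C(L))$ is surjective because the fibres are one-dimensional), but it has a genuine gap exactly at the step you yourself flag as ``the crux.'' The per-cycle hypothesis $LD_i+2\chi(D_i)>0$ does \emph{not} imply the per-step degree bound $(L-D_{i-1})E_{k_i}>-2\chi(E_{k_i})$ that your induction needs. Your own telescoping shows the per-step quantity is only bounded below by $D_{i-1}E_{k_i}-a_{i-1}$, where $a_{i-1}=LD_{i-1}+2\chi(D_{i-1})\ge 1$; whenever $a_{i-1}\ge 2$ and $D_{i-1}E_{k_i}=1$ this bound is negative, and nothing in the hypothesis prevents $LE_{k_i}$ from being so negative that $H^1(\cO_{E_{k_i}}(L-D_{i-1}))\ne 0$. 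At that point the naive d\'evissage stalls: a nonvanishing $H^1$ on the curve being attached does not by itself kill the induction, but it does mean the inductive step is no longer a formal consequence of the exact sequence, and one must argue that the offending class dies in $H^1(\cO_{D_i}(L))$.

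The proposed rescue via $\chi(C)\ge 0$ (Proposition \ref{p:ellchi}) fails on two counts. First, that proposition is only available for elliptic singularities, whereas the theorem is stated for an arbitrary resolution (indeed its converse clause concerns rational singularities). Second, even granting $\chi(D_j)\ge 0$ for all $j$, one only gets $D_{i-1}E_{k_i}\le \chi(D_{i-1})+\chi(E_{k_i})$, which combined with the hypothesis still leaves the per-step quantity bounded below by $LE_{k_i}+\chi(E_{k_i})-\chi(D_{i-1})$ --- not controlled. This is precisely why R\"ohr's argument is not a straightforward induction along one computation sequence: it exploits that the hypothesis holds for the partial sums of \emph{every} computation sequence (so the sequence can be chosen adaptively) together with Clifford-type estimates for rank-one torsion-free sheaves on non-reduced curves, the same mechanism that underlies Theorem \ref{t:FT}, whose hypothesis is accordingly imposed on every subcurve rather than on the steps of a single chain. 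As written, your argument proves the theorem only under the additional (unjustified) assumption that the per-step inequalities hold, so the central claim remains open.
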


\par 
From Theorem \ref{t:rohr} and Proposition \ref{p:ellchi}, 
we have the following.

\begin{cor}\label{c:ellv}
Assume that $A$ is an elliptic singularity. 
Let $L$ be a nef divisor on $X$ such that $L  E_{min}>0$.
Then $H^1(\cO_X(L))=0$.
\end{cor}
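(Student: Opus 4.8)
The plan is to reduce everything to R{\"o}hr's vanishing criterion (Theorem \ref{t:rohr}): it suffices to check that $LC > -2\chi(C)$ for every cycle $C>0$ appearing in a computation sequence for the fundamental cycle $Z_f$. Since $A$ is elliptic, Proposition \ref{p:ellchi} guarantees that $\chi(C)\ge 0$ for every such $C$, so I would organize the verification according to whether $\chi(C)$ is positive or zero; there is no negative case to worry about.

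First I would dispose of the case $\chi(C)>0$. Here the right-hand side $-2\chi(C)$ is strictly negative, while the nefness of $L$ forces $LC\ge 0$ on the effective cycle $C$ (it is a non-negative combination of exceptional components, on each of which $L$ is non-negative). Hence $LC\ge 0>-2\chi(C)$, and the strict inequality is automatic.

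The only genuinely substantive case is $\chi(C)=0$, where the criterion now demands $LC>0$. This is exactly where I would invoke the second assertion of Proposition \ref{p:ellchi}: a cycle with $\chi(C)=0$ must satisfy $C\ge E_{min}$, so $C-E_{min}$ is effective. Nefness of $L$ then gives $L(C-E_{min})\ge 0$, and combining with the standing hypothesis $LE_{min}>0$ yields $LC\ge LE_{min}>0$, as required.

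Having verified the hypothesis of Theorem \ref{t:rohr} in both cases, I would conclude $H^1(\cO_X(L))=0$. I do not expect a serious obstacle here: the entire content is the interplay between nefness, which bounds $LC$ from below, and ellipticity, which simultaneously forbids $\chi(C)<0$ and pins any cycle with $\chi(C)=0$ above $E_{min}$. The positivity $LE_{min}>0$ thereby propagates to precisely the cycles where the strict inequality is not already forced by a positive $\chi$, and the result follows.
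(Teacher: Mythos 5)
Your proof is correct and is precisely the argument the paper intends: the corollary is stated as an immediate consequence of R{\"o}hr's criterion (Theorem \ref{t:rohr}) together with Proposition \ref{p:ellchi}, with nefness handling the cycles of positive Euler characteristic and $C\ge E_{min}$ handling the cycles with $\chi(C)=0$. Nothing to add.
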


\begin{prop}\label{p:c1}
Assume that $A$ is an elliptic singularity 
and $D$ the minimally elliptic cycle on $X$. 
Let $F$ be a nef divisor on $X$.
If $FD>0$, then $H^1(\cO_X(F-D))=0$, 
and from the exact sequence 
$0 \to \cO_X(F-D) \to \cO_X(F) \to \cO_D(F) \to 0$,  
the restriction map
$H^0(\cO_X(F))\to H^0(\cO_D(F))$ is surjective.
\end{prop}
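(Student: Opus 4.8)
The plan is to reduce everything to the single vanishing statement $H^1(\cO_X(F-D))=0$ and to obtain this from R\"ohr's theorem (Theorem \ref{t:rohr}). Indeed, once the vanishing is known, the surjectivity of $H^0(\cO_X(F))\to H^0(\cO_D(F))$ is immediate: the long exact cohomology sequence attached to $0\to\cO_X(F-D)\to\cO_X(F)\to\cO_D(F)\to 0$ contains the segment $H^0(\cO_X(F))\to H^0(\cO_D(F))\to H^1(\cO_X(F-D))$, whose last term vanishes. So the whole content is the vanishing, and I would set $L=F-D$ and verify the numerical hypothesis of Theorem \ref{t:rohr}, namely that $(F-D)C>-2\chi(C)$ for every cycle $C>0$ occurring in a computation sequence $\{C_i\}$ for $Z_f$.

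Two preliminary observations drive the verification. First, for a cycle $C$ occurring in a computation sequence we have $h^0(\cO_C)=1$, so $\chi(C)=1-h^1(\cO_C)\le 1$; combined with $\chi(C)\ge 0$ from Proposition \ref{p:ellchi}, this forces $\chi(C)\in\{0,1\}$. Second, and this is the key point, I would bound the intersection number $CD$ by applying the additivity formula $\chi(C+D)=\chi(C)+\chi(D)-CD$ together with $\chi(D)=0$ (as $D$ is the minimally elliptic cycle) and $\chi(C+D)\ge 0$ (Proposition \ref{p:ellchi}); this yields at once $CD\le\chi(C)$.

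With these in hand the verification splits into two cases. If $\chi(C)=1$, then using that $F$ is nef ($FC\ge 0$) and $CD\le\chi(C)=1$ we get $(F-D)C=FC-DC\ge -DC\ge -1>-2=-2\chi(C)$. If $\chi(C)=0$, Proposition \ref{p:ellchi} gives $C\ge D$; hence $FC\ge FD>0$, so $FC\ge 1$, while $DC\le\chi(C)=0$, and therefore $(F-D)C=FC-DC\ge 1>0=-2\chi(C)$. In both cases the hypothesis of Theorem \ref{t:rohr} is satisfied, so $H^1(\cO_X(F-D))=0$, and the surjectivity of the restriction map follows from the long exact sequence as indicated above.

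The main obstacle is the borderline case $\chi(C)=0$: there the generic estimate only gives $(F-D)C\ge 0$, which is not strict enough for R\"ohr's strict inequality, and one must genuinely use \emph{both} hypotheses on $F$, nefness and $FD>0$, through the inclusion $C\ge D$ to upgrade to $(F-D)C\ge 1$. Isolating the clean bound $CD\le\chi(C)$ is what makes the two cases close uniformly; everything else is routine bookkeeping with the intersection pairing.
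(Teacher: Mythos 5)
Your proof is correct, but it takes a genuinely different route from the paper's. You verify R\"ohr's numerical criterion directly for $L=F-D$, using the dichotomy $\chi(C)\in\{0,1\}$ for computation-sequence cycles (from $h^0(\cO_C)=1$ and $\chi\ge 0$ in the elliptic case) together with the clean bound $CD\le\chi(C)$ extracted from $\chi(C+D)=\chi(C)+\chi(D)-CD\ge 0$ and $\chi(D)=0$; the hypotheses $F$ nef and $FD>0$ enter exactly where they must, in the borderline case $\chi(C)=0$ via $C\ge D$. The paper instead splits on whether $F-D$ is nef: if it is, Corollary \ref{c:ellv} applies at once since $(F-D)D>0$; if not, it builds a Giraud--Laufer sequence $D=D_0<D_1<\cdots<D_s$ with $F-D_s$ nef, shows each step changes neither $H^1$ nor $H^0$, and concludes from $(F-D_s)D>0$. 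Your argument is shorter and avoids the auxiliary sequence entirely, which is a real simplification for the proposition as stated. What the paper's longer construction buys is a set of byproducts --- the cycles $D_i$, the identification $H^0(\cO_X(F-D))=H^0(\cO_X(F-D_s))$ with $F-D_s$ nef, and the estimate $-(Z+D_s)D\ge -ZD$ --- which are explicitly invoked later in the proof of Theorem \ref{I2} (``By Proposition \ref{p:c1} and its proof\ldots''); your proof would not supply those, so if one adopted it the argument for Theorem \ref{I2} would need the sequence $\{D_i\}$ reinstated separately.
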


\begin{proof}  
If $F-D$ is nef, since $(F-D)D>0$, 
we have $H^1(\cO_X(F-D))=0$ by Corollary \ref{c:ellv}.
Assume that $F-D$ is not nef.
As in \cite[1.4]{G}, we have a sequence $\{D_i\}$ of cycles such that
\[
D_0=D, \ \ D_i = D_{i-1} + E_{j_i}, \; (F-D_{i-1})  E_{j_i}<0\; 
(1\le i \le s), \ \ F-D_{s} \text{ is nef.}
\]
Since $F-Z_f$ is nef, $(F-D_{i-1})  E_{j_i}<0$ implies $D_{i-1}E_{j_i}>0$
 and $D\le Z_f$, we see that $D_s\le Z_f$ and $D_s$ occurs 
 in a computation sequence for $Z_f$.
Then the equalities 
$\chi(D)=\chi(D_s)=0$ and $\chi(D_i) = \chi(D_{i-1}) 
+\chi( E_{j_i})-D_{i-1}  E_{j_i}$ 
imply that $F  E_{j_i}=0$, $D_{i-1}  E_{j_i}=1$, 
and $h^j(\cO_{E_{j_i}}(F-D_{i-1}))=0$ for $j=0,1$ and $1\le i \le s$.
Since 
\[
0\le \chi(D_s+D)=\chi(D_s)+\chi(D)-DD_s=-DD_s,
\]
we have $(F-D_s)D>0$.
Therefore, from the exact sequence
\[
0 \to \cO_X(F-D_i) \to \cO_X(F-D_{i-1})
\to \cO_{E_{j_i}}(F-D_{i-1}) \to 0,
\]
we obtain $H^1(\cO_X(F-D))=H^1(\cO_X(F-D_s))=0$. \qed
\end{proof}

\begin{thm}[{\cite[2.7]{FT}}] \label{t:FT}
Let $C$ be a Cohen-Macaulay projective scheme 
of pure dimension $1$, and let $\cF$ be a rank $1$ 
torsion-free sheaf on $C$.
Assume that $\deg \cF|_{W}:= \chi(\cF|_{W})-\chi(W) > -2\chi(W)$ 
for every subcurve $W\subset C$.  
Then $H^1(\cF)=0$. 
\end{thm}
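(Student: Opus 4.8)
The plan is to deduce the vanishing from Grothendieck--Serre duality on $C$ and to convert the numerical hypothesis on subcurves into a degree inequality that no nonzero dual homomorphism can satisfy. Since $C$ is Cohen--Macaulay, projective, of pure dimension $1$, it carries a dualizing sheaf $\omega_C$, which is itself a rank $1$ torsion-free sheaf, and duality gives $H^1(C,\cF)\cong \Hom_{\cO_C}(\cF,\omega_C)^{\vee}$ (the case $i=1$ of $H^i(C,\cF)^{\vee}\cong\Ext^{1-i}_{\cO_C}(\cF,\omega_C)$). Hence it suffices to prove that every homomorphism $\phi\colon \cF\to\omega_C$ vanishes.

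I would then argue by contradiction. Suppose $\phi\neq 0$ and set $\cI:=\phi(\cF)\subseteq\omega_C$, a nonzero rank $\le 1$ torsion-free sheaf. Let $W\subseteq C$ be the subcurve consisting of the components on which $\cI$ has rank $1$. As $\cI$ is torsion-free and has rank $0$ on the remaining components, it is supported on $W$, and the key structural point is that the maximal subsheaf of $\omega_C$ supported on $W$ is precisely the dualizing sheaf of the subcurve, $\omega_W=\mathcal{H}om_{\cO_C}(\cO_W,\omega_C)=(0:_{\omega_C}I_W)\subseteq\omega_C$. Thus $\cI\hookrightarrow\omega_W$ with both sheaves of rank $1$ on $W$, giving $\deg\cI\le\deg\omega_W$. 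On the other hand, since $\cI$ is an $\cO_W$-module the surjection $\cF\twoheadrightarrow\cI$ factors through $\cF|_W$, and a surjection of rank $1$ torsion-free sheaves on $W$ is an isomorphism (its kernel, a rank $0$ subsheaf of a torsion-free sheaf, vanishes), so $\cF|_W\cong\cI$ and $\deg\cF|_W=\deg\cI$. Finally, Serre duality on $W$ yields $\chi(\omega_W)=-\chi(\cO_W)$, whence $\deg\omega_W=\chi(\omega_W)-\chi(W)=-2\chi(W)$. Chaining these gives $-2\chi(W)<\deg\cF|_W=\deg\cI\le\deg\omega_W=-2\chi(W)$, where the strict inequality is exactly the hypothesis applied to the subcurve $W$; this contradiction forces $\phi=0$.

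The main obstacle I anticipate is the structural identification in the middle step: showing that the image $\cI$, a priori only set-theoretically supported on $W$, lies inside the dualizing sheaf $\omega_W\subseteq\omega_C$ for the correct Cohen--Macaulay scheme structure on $W$, and that this inclusion is rank-preserving so the degree comparison is legitimate. This rests on $\omega_C$ being a maximal Cohen--Macaulay sheaf, hence having no embedded components, so that its sections supported on $W$ form exactly $\omega_W$; the identity $\deg\omega_W=-2\chi(W)$ together with the additivity $\chi(C_1+C_2)=\chi(C_1)+\chi(C_2)-C_1C_2$ recorded in the preliminaries then makes the numerics transparent. An alternative route by induction on the number of components, peeling off one component $C_1$ via $0\to\cF_D\to\cF\to\cF|_{C_1}\to 0$, reduces the problem to the integral case on $C_1$ plus a vanishing on the complementary subcurve $D$; but feeding the hypothesis through the twist $\cF_D\cong\cF|_D(-C_1\cap D)$ forces one to apply the subcurve condition on the enlarged curve $W\cup C_1$, and balancing the resulting degrees is delicate, so I would favor the duality argument above.
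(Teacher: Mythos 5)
First, a point of comparison: the paper does not prove this statement at all --- it is quoted from \cite[2.7]{FT} and used as a black box, so there is no internal proof to measure your attempt against. Your strategy (dualize to $\Hom(\cF,\omega_C)$, push a nonzero $\phi$ down to a subcurve $W$ carrying its image $\cI\subseteq\omega_W$, and contradict the degree hypothesis on $W$) is exactly the standard argument behind vanishing lemmas of this type, going back to Catanese--Franciosi--Hulek--Reid and used in \cite{FT}; so the route is the right one.

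As written, however, the chain $-2\chi(W)<\deg\cF|_W=\deg\cI\le\deg\omega_W=-2\chi(W)$ has a genuine gap at the middle equality. The sheaf $\cF\otimes\cO_W$ is \emph{not} torsion-free in general: restricting a torsion-free sheaf to a subcurve creates finite-length torsion at the points where $W$ meets the complementary curve (for a node $p$ of $C_1\cup C_2$ one has $\m_p\otimes\cO_{C_1}\cong\cO_{C_1}(-p)\oplus k_p$). Hence ``a surjection of rank $1$ torsion-free sheaves on $W$ is an isomorphism'' does not apply to $\cF|_W\to\cI$; its kernel is a finite-length sheaf that need not vanish, and the Euler-characteristic count gives $\chi(\cF\otimes\cO_W)=\ell(\ker)+\chi(\cI)$, i.e.\ $\deg\cF|_W\ge\deg\cI$ --- the inequality points the wrong way and the contradiction evaporates. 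The argument closes only if $\cF|_W$ is understood as the torsion-free restriction $\cF\otimes\cO_W/\mathrm{tors}$; this is the convention of \cite{FT}, and it is the one the paper implicitly uses when, in the proof of Proposition~\ref{p:c2}, it computes $\deg(I_{B_1}\cL_1)|_W$ from the cokernel of the map into the invertible sheaf $\cL_1|_W$ --- but you must say so, since with the naive restriction the hypothesis is strictly weaker and your proof does not establish the statement. Relatedly, $W$ must carry the scheme structure defined by $\ann(\cI)$ (this ideal is unmixed because $\cI$, as a subsheaf of the $S_1$ sheaf $\omega_C$, has no finite-length subsheaves), not merely the set-theoretic union of components where $\cI$ has rank $1$: that choice is what guarantees both $\cI\subseteq(0:_{\omega_C}I_W)=\omega_W$ and that $\cI$ is a faithful, hence generically full, submodule of $\omega_W$, so that $\omega_W/\cI$ has finite length and $\deg\cI\le\deg\omega_W$ is legitimate. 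You anticipated this second point as the ``main obstacle''; the first point --- torsion in the restriction --- is the one that actually breaks the displayed chain and needs to be repaired.
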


\par
To show the normality of an ideal $I$, the following is essential. 

\begin{prop} \label{p:c2}
Let $\cL_1$ and $\cL_2$ be nef invertible sheaves 
on the minimally elliptic cycle $D$ such that 
$d_i:=\deg \cL_i \ge 3$ for $i=1,2$.
Then the multiplication map
\[
\gamma\: H^0(\cL_1)\otimes H^0(\cL_2) \to 
H^0(\cL_1\otimes \cL_2)
\]
is surjective.
\end{prop}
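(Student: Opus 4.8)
```latex
The plan is to prove surjectivity of the multiplication map
$\gamma\colon H^0(\cL_1)\otimes H^0(\cL_2)\to H^0(\cL_1\otimes\cL_2)$
by a standard cohomological reduction: I would fix a general section
$s\in H^0(\cL_1)$, use it to produce an exact sequence on $D$, and then
show that the obstruction to surjectivity lives in an $H^1$ group that
vanishes by the elliptic vanishing theorems recalled above
(Theorem \ref{t:FT} and Proposition \ref{p:c1}). Concretely, since
$\cL_1$ is nef with $d_1=\deg\cL_1\ge 3$ and $D$ is the minimally
elliptic cycle with $\chi(D)=0$, I expect $\cL_1$ to be globally
generated on $D$; a general section $s$ then vanishes along an effective
divisor (a length-$d_1$ subscheme) and yields a short exact sequence of
the shape
\[
0\to \cL_2 \xrightarrow{\;\cdot s\;} \cL_1\otimes\cL_2 \to
(\cL_1\otimes\cL_2)|_{T}\to 0,
\]
where $T$ is the zero-scheme of $s$. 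Tensoring and taking cohomology, the
cokernel of $\gamma$ is controlled by $H^1(\cL_2)$ together with the
image of the global sections on the finite scheme $T$.

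The key steps, in order, would be: first, establish global generation of
$\cL_1$ (and of $\cL_2$) on $D$ using $\deg\cL_i\ge 3 \ge 2\chi(D)+2=2$,
invoking Theorem \ref{t:FT} with $\cF=\cL_i(-\text{point})$ to get the
needed surjectivity onto sections over points; second, verify the
vanishing $H^1(\cL_2)=0$, which follows from Theorem \ref{t:FT} since for
every subcurve $W\subseteq D$ we have $\chi(W)\ge 0$ (Proposition
\ref{p:ellchi}) and $\deg\cL_2|_W\ge 0 > -2\chi(W)$ provided one checks
the boundary case $\chi(W)=0$ carefully; third, choose $s$ so that its
divisor $T$ imposes independent conditions, and run the long exact
cohomology sequence to identify $\Coker\gamma$ with a subquotient of
$H^1(\cL_2)=0$. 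The arithmetic genus bookkeeping via
$\chi(C_1+C_2)=\chi(C_1)+\chi(C_2)-C_1C_2$ and the Riemann--Roch formula
$\chi(C)=-(K_X+C)C/2$ will be the routine engine behind the degree
estimates.

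The main obstacle I anticipate is the boundary behavior when a subcurve
$W\subseteq D$ satisfies $\chi(W)=0$: by Proposition \ref{p:ellchi} such
a $W$ must contain $E_{min}=D$, so in fact $W=D$, but the hypothesis
$\deg\cL_i|_D = d_i\ge 3 > 0 = -2\chi(D)$ only barely clears the bound in
Theorem \ref{t:FT}, and one must confirm the \emph{strict} inequality
holds on \emph{every} $W$ simultaneously, including the full cycle $D$
itself. The threshold $d_i\ge 3$ (rather than $\ge 2$) is presumably
exactly what is needed to guarantee that, after subtracting a length-one
or length-two subscheme coming from $s$, the remaining twisted sheaf
still satisfies the strict degree inequality of Theorem \ref{t:FT} and
hence has vanishing $H^1$. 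I would therefore devote the most care to
tracking how the degree drops when passing from $\cL_1\otimes\cL_2$ to
$(\cL_1\otimes\cL_2)(-T)=\cL_2$, ensuring the bound $d_2\ge 3$ keeps the
relevant $H^1$ zero on the delicate subcurve $W=D$.
```
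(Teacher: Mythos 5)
Your setup coincides with the paper's: both take a general section $s\in H^0(\cL_1)$ (global generation of $\cL_i$ via Theorem \ref{t:FT}, using $\chi(W)>0$ for $0<W\lneqq D$ and $\chi(D)=0$), form the sequence $0\to\cO_D\xrightarrow{\times s}\cL_1\to\cL_1|_B\to 0$ with $B$ the degree-$d_1$ zero scheme of $s$, tensor with $\cL_2$, and use $H^1(\cL_2)=0$. But your final step contains a genuine gap: the cokernel of $\gamma$ is \emph{not} a subquotient of $H^1(\cL_2)$. The vanishing $H^1(\cL_2)=0$ only gives the exact sequence $0\to H^0(\cL_2)\xrightarrow{\times s}H^0(\cL_1\otimes\cL_2)\to H^0(\cO_B)\to 0$, so $\Coker\gamma$ is a quotient of the $d_1$-dimensional space $H^0(\cO_B)$, and surjectivity of $\gamma$ is equivalent to the statement that the image of $\gamma$ maps \emph{onto} all of $H^0(\cO_B)$ — not merely onto the constant section $1$, which is all that the single product $s_1s_2$ provides. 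Establishing this is the bulk of the paper's proof and is entirely absent from your outline. (Note also that the base-point-free pencil trick is no rescue here: it would require $H^1(\cL_2\otimes\cL_1^{-1})=0$, which fails when $d_1=d_2$ since $\deg(\cL_2\otimes\cL_1^{-1})|_D=0=-2\chi(D)$ only meets, and does not exceed, the threshold of Theorem \ref{t:FT}.)

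Concretely, the missing content is twofold. First, since $D$ is in general non-reduced, one needs a local structure statement for $B$ (the paper's Claim: at each $p\in\supp(B)$ one can choose coordinates with $\cO_{D,p}=\cO/(x^{n_p})$ and $\cO_{B,p}\cong\cO/(x^{n_p},y)$, so subschemes of $B$ at $p$ correspond to powers of $x$); this itself requires a vanishing $H^1(I_p^2\cL_1)=0$. Second, for every proper subscheme $B'\subset B$ one must exhibit a section in $\Img\gamma$ vanishing exactly on $B'$ (and on nothing else of $B$); the paper does this by factoring $\cI_{B'}=\cI_{B_1}\cI_{B_2}$ with $\deg\cI_{B_i}\cL_i=d_i-\deg B_i\ge 2$ and taking $t=t_1t_2$ with $t_i\in H^0(\cI_{B_i}\cL_i)$ base-point-free. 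The inequality $\deg B_1+\deg B_2=\deg B'<d_1$ together with $(d_1-2)+(d_2-2)\ge d_1-1$, i.e. $d_2\ge 3$, is exactly where the hypothesis $d_i\ge 3$ is used — not, as you conjecture, to preserve the strict inequality in Theorem \ref{t:FT} for $\cL_2=(\cL_1\otimes\cL_2)(-B)$ on the subcurve $W=D$ (that vanishing needs only $d_2\ge 1$). Without this second half your argument proves only that $\dim\Coker\gamma\le d_1-1$, not that it is zero.
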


\begin{proof}
First, note that $\chi(W)>0$ for any cycle $0<W\lneqq D$ 
by the definition of the minimally elliptic cycle.
For any subscheme $\Lambda \subset D$, 
we denote by $I_{\Lambda}\subset \cO_D$ 
the ideal sheaf of $\Lambda$.
For any cycle $W\le D$ and any $p\in \supp (W)$, we have 
$\deg(I_p\cL_i)|_W=\deg \cL_i|_W-1$.  Therefore, it follows from 
Theorem \ref{t:FT} that $H^1(I_p\cL_i)=0$ for any point 
$p \in \supp(D)$.
Hence $\cL_i$ is generated by global sections. 
Let $s\in H^0(\cL_1)$ be a general section and 
consider the exact sequence
\begin{equation}
\label{eq:s}
0\to \cO_D \xrightarrow{ \times s } \cL_1 \to \cL_1|_B \to 0,
\end{equation}
where $B$ is the zero-dimensional subscheme of $D$ 
of degree $d_1=\deg \cL_1$ defined by $s$.
Note that since $\cL_1$ is generated by global sections, 
each point of $\supp (B)$ is a nonsingular point 
of $\supp (D)$ and there exists $s_1\in H^0(\cL_1)$ 
such that $\cL_1|_B\cong s_1\cO_B\cong \cO_B$.
Let $p\in  \supp(B)$ be any point.
The following fact make our proof easier.

\begin{clm}\label{cl:1}
Let $\n \subset \cO:=\cO_{X,p}$ be the maximal ideal.
Then we can take generators $x,y$ of $\n$ so that 
$\cO_{D,p}=\cO/(x^{n_p})$ with $n_p\ge 1$ and 
$\cO_{B,p}\cong \cO/(x^{n_p},y)$.
Hence, at $p$, the subschemes of $B$ correspond to monomials 
$x^{\ell}$ with $\ell\le n_p$.
\end{clm}

\begin{proof}[Proof of Claim \ref{cl:1}]
Since $E$ is nonsingular at $p$, we have the generators $x,y\in \n$ 
such that $\cO_{D,p}=\cO/(x^{n_p})$. 
Assume that $\cO_{B,p}=\cO/(x^{n_p}, f)$, 
where $f\in \n$.
If $\ell_{\cO}(\cO/(x,f))=1$, we can put $y=f$.
Assume that  $\ell_{\cO}(\cO/(x,f))\ge 2$.
Let $0<W\le D$ be any cycle and let $\cO_{W,p}=\cO/(x^{m_p})$. 
Assume that $p\in \supp (W)$.
Then the cokernel of $(I_p^2\cL_1)|_W\to \cL_1|_W$ 
is isomorphic to $\cO/\n^2+(x^{m_p})$.
If $m_p=1$, then $\deg(I_p^2\cL_1)|_W=\deg\cL_1|_W-2$.
If $m_p\ge 2$, then  $\deg(I_p^2\cL_1)|_W
=\deg\cL_1|_W-3\ge \ell_{\cO}(\cO/(x^{m_p},f))-3\ge 1$. 
Thus we have $H^1(I_p^2\cL_1)=0$ by Theorem \ref{t:FT} and the  map 
$H^0(I_p\cL_1) \to I_p\cL_1/I_p^2\cL_1$ is surjective; however, 
this shows that we can take $f=y$. \qed
\end{proof}

\par
Tensoring $\cL_2$ with the sequence Eq. $(\ref{eq:s})$, 
we obtain the exact sequence 
\[
0\to H^0( \cL_2) \xrightarrow{ \times s } 
H^0(\cL_1 \otimes \cL_2) \to H^0(\cO_B) \to 0,
\]
since $H^1(\cL_2)=0$.
As seen above, we have general sections $s_1\in H^0(\cL_1)$ and  
$s_2\in H^0(\cL_2)$ such that $s_1s_2\mapsto 1\in H^0(\cO_B)$.
Thus the sections of $H^0(\cL_1 \otimes \cL_2)$ 
which map to $1\in H^0(\cO_B)$ are in the image of $\gamma$.
It is now sufficient to show that for any subscheme $B'\subset B$ 
of $\deg B'<d_1$, the image of $\gamma$ contains a section 
$t \in H^0(\cI_{B'}\cL_1\otimes \cL_2)$ such that 
$\cL_1\otimes \cL_2/t\cO_D \cong \cO_{B'}\oplus\cO_{\overline{B}}$, 
where $\supp (\overline{B}) \cap \supp (B)=\emptyset$.
To prove this, we write $\cI_{B'}=\cI_{B_1}\cI_{B_2}$ 
($B_1, B_2\subset B'$) so that $\deg \cI_{B_i}\cL_i=d_i-\deg B_i\ge 2$ 
for $i=1,2$ (note that $\deg B_1 + \deg B_2 <d_1$).
Let $0<W\le D$ be any cycle and  $p\in \supp(B)$.  
We use the notation of the proof of Claim \ref{cl:1}. 
Suppose that $\cO_{B_1,p}=\cO/(x^{\ell_p},y)$.
Then $\deg \cL_1|_W=\sum_W m_p$, 
where $\sum_W$ means the sum over 
$p\in \supp(B) \cap \supp(W)$, and the cokernel of 
$(I_{B_1}\cL_1)|_W\to \cL_1|_W$ is isomorphic to 
$\cO/(x^{m_p}, x^{\ell_p},y)$.
Therefore, 
\[
\deg(I_{B_1}\cL_1)|_W=\sum_W (m_p-\min(m_p, \ell_p)).
\]
Since $\deg \cI_{B_1}\cL_1=d_1-\deg B_1\ge 2$, by Theorem \ref{t:FT}, 
we have $H^1(I_q\cI_{B_1}\cL_1)=0$ for any point $q\in \supp(B)$. 
Hence $H^0(\cI_{B_1}\cL_1)$ has no base points.
Clearly, the same results for $\cI_{B_2}\cL_2$ hold.
Therefore, for each $i=1,2$,
we have a section $t_i\in H^0(\cI_{B_i}\cL_i)$ such that 
 $\cL_i/t_i\cO_D \cong \cO_{B_i}\oplus\cO_{\overline{B_i}}$, 
where  $\supp (\overline{B_i}) \cap \supp (B)=\emptyset$.
Then $t:=t_1t_2$ satisfies the required property. \qed
\end{proof}

\begin{thm} \label{I2}  
Let $(A,\m)$ be a two-dimensional excellent normal local domain 
containing an algebraically closed field $k= A/\m$. 
Assume that $A$ is a strongly elliptic singularity. 
If $I = I_Z $ is an elliptic ideal   
(equivalently $I$ is not a $p_g$-ideal) and $D$ is 
the minimally elliptic cycle on $X$, 
then $I^2$  is integrally closed (equivalently $I$ is normal) 
if and only if $- Z D \ge 3$  and 
if $- ZD \le 2$,   then $I^2 = QI$.    
\end{thm}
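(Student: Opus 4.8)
The plan is to translate the normality question into a statement about multiplication maps of sections on the minimally elliptic cycle $D$, where the geometric machinery of Proposition \ref{p:c2} becomes available. Since $A$ is strongly elliptic ($p_g(A)=1$) and $I=I_Z$ is an elliptic ideal, we have $q(I)=q(\infty I)=0$ and $\bar e_2(I)=1$, so by Proposition \ref{normal} the three normality conditions on $\overline{I^n}$ are equivalent; hence it suffices to decide when $\overline{I^2}=I^2$, equivalently (by Theorem \ref{e_2=1}) when $\ell_A(\overline{I^2}/I^2)=0$ rather than $I^2=QI$. First I would record that $\overline{I^2}=H^0(\cO_X(-2Z))$ while $I^2$ is the image of the multiplication map $H^0(\cO_X(-Z))\otimes H^0(\cO_X(-Z))\to H^0(\cO_X(-2Z))$; thus $I$ is normal precisely when this map is surjective.

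Next I would reduce surjectivity on $X$ to surjectivity on $D$. The vanishing $q(I)=h^1(\cO_X(-Z))=0=q(2I)=h^1(\cO_X(-2Z))$ together with Proposition \ref{p:c1} should let me pass to the restriction: applying that proposition with the nef divisor $F=-Z$ (so $FD=-ZD$), when $-ZD>0$ the restriction $H^0(\cO_X(-Z))\to H^0(\cO_D(-Z))$ is surjective, and similarly for $-2Z$. Chasing these surjections through the commutative square comparing multiplication on $X$ with multiplication on $D$, the surjectivity of $H^0(\cO_X(-Z))\otimes H^0(\cO_X(-Z))\to H^0(\cO_X(-2Z))$ becomes equivalent to the surjectivity of
\[
H^0(\cO_D(-Z))\otimes H^0(\cO_D(-Z)) \to H^0(\cO_D(-2Z)).
\]
Setting $\cL_1=\cL_2=\cO_D(-Z)$, which is nef of degree $d=-ZD$, Proposition \ref{p:c2} gives surjectivity as soon as $-ZD\ge 3$. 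This establishes the "if" direction: $-ZD\ge 3$ forces $I^2=\overline{I^2}$.

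For the converse and the boundary cases I would analyze $-ZD\le 2$ directly on $D$ by a degree count. Here the issue is that a degree-one or degree-two line bundle on the genus-one curve $D$ cannot have its global sections multiply onto all of $H^0(\cO_D(-2Z))$: when $d=-ZD=1$ one has $h^0(\cL)=1$ (or the relevant Riemann--Roch count on $D$, using $\chi(D)=0$), so the image of the square of sections is too small; when $d=2$ a dimension count of $\mathrm{Sym}^2 H^0(\cL)$ against $h^0(\cL^{\otimes 2})$ again shows non-surjectivity. Since we already know $\ell_A(\overline{I^2}/QI)=1$, the failure of $\overline{I^2}=I^2$ leaves only the possibility $I^2=QI$, which is exactly the stated alternative. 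The main obstacle will be the careful bookkeeping on $D$ in the low-degree cases: $D$ need not be irreducible or reduced, so "genus one" reasoning must be replaced by the arithmetic-genus formalism ($\chi(D)=0$, with $\chi(W)>0$ for $0<W\lneqq D$) and the torsion-free/subcurve estimates of Theorem \ref{t:FT}, precisely as in the proof of Proposition \ref{p:c2}; I expect verifying non-surjectivity for $d\le 2$ while handling possibly non-reduced $D$ to be the delicate point, whereas the $d\ge 3$ direction follows cleanly from the cited propositions.
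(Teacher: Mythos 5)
Your treatment of the case $-ZD\le 2$ matches the paper: the count $h^0(\cO_D(-nZ))=-nZD$ shows the multiplication map on $D$ cannot be surjective, Proposition \ref{p:c1} (surjectivity of $H^0(\cO_X(-2Z))\to H^0(\cO_D(-2Z))$) transfers the failure up to $X$, and $\ell_A(\overline{I^2}/QI)=1$ then forces $I^2=QI$. That half is fine.

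The gap is in the direction $-ZD\ge 3\Rightarrow I^2=\overline{I^2}$. You assert that a chase through the commutative square makes surjectivity of $\alpha\colon H^0(\cO_X(-Z))^{\otimes 2}\to H^0(\cO_X(-2Z))$ \emph{equivalent} to surjectivity of the multiplication map on $D$. It is not. Surjectivity of the bottom map and of the left vertical only give that $\gamma\circ\alpha$ is surjective, where $\gamma$ is the restriction $H^0(\cO_X(-2Z))\to H^0(\cO_D(-2Z))$; this yields
\[
\overline{I^2}=I_{2Z}=I^2+\ker\gamma=I^2+H^0(\cO_X(-2Z-D)),
\]
and the error term $H^0(\cO_X(-2Z-D))$ must still be disposed of. This is where the real work in the paper's proof lies: using the cycles $D_s$ from the proof of Proposition \ref{p:c1} one checks $-(Z+D_s)D\ge -ZD\ge 3$, applies Proposition \ref{p:c2} again to the pair $\cO_D(-Z)$, $\cO_D(-Z-D_s)$ to get $H^0(\cO_X(-2Z-D_s))\subset I\cdot H^0(\cO_X(-Z-D_s))+H^0(\cO_X(-2Z-D_s-D))$, and iterates to obtain $\overline{I^2}\subset I^2+H^0(\cO_X(-2Z-mD))$ for every $m>0$. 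One then concludes because the minimal anti-nef cycle $H(m)\ge 2Z+mD$ satisfies $H(m)\ge nE$ for $m\gg 0$, so $H^0(\cO_X(-2Z-mD))\subset I^2$ eventually. Without this descending iteration your argument proves only $\overline{I^2}=I^2+H^0(\cO_X(-2Z-D))$, not normality.
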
 
 
\begin{proof} 
Assume that $-ZD\le 2$.
Since $H^1(\cO_X(-Z))=0$, by the Riemann-Roch theorem, we have 
$h^0(\cO_D(-nZ))=-nZD$ for $n\ge 1$. 
Hence 
\[
H^0(\cO_D(-Z))\otimes H^0(\cO_D(-Z)) \to H^0(\cO_D(-2Z))
\] 
cannot be surjective. 
By Proposition \ref{p:c1}, the map 
\[
H^0(\cO_X(-Z))\otimes H^0(\cO_X(-Z)) \to H^0(\cO_X(-2Z))
\]
cannot be surjective, too.
Therefore, $I^2\ne I_{2Z}$, and hence $I^2 = QI$. 

\par \vspace{2mm}
Assume that  $-ZD\ge 3$.
By Propositions \ref{p:c1} and \ref{p:c2}, we have 
the following commutative diagram:
\[
\begin{CD}
H^0(\cO_X(-Z))\otimes H^0(\cO_X(-Z)) @>{\alpha}>> 
H^0(\cO_X(-2Z)) \\
@VVV @VVV \\
H^0(\cO_D(-Z))\otimes H^0(\cO_D(-Z)) @>>> H^0(\cO_D(-2Z)),
\end{CD}
\]
where at least the maps other than $\alpha$ are surjective.
By Proposition \ref{p:c1} and its proof, we have 
$I_{2Z}=I^2+H^0(\cO_X(-2Z-D))$, 
$H^0(\cO_X(-2Z-D))=H^0(\cO_X(-2Z-D_s))$, 
and $-(Z+D_s)D\ge -ZD\ge 3$.
We have as above a surjective map
\[
H^0(\cO_D(-Z))\otimes H^0(\cO_D(-Z-D_s)) 
\to H^0(\cO_D(-2Z-D_s))
\]
and $H^0(\cO_X(-2Z-D_s)) \subset 
I H^0(\cO_X(-Z-D_s))+H^0(\cO_X(-2Z-D_s-D))$.
From these arguments, for $m>0$, we have 
$I_{2Z}\subset I^2+H^0(\cO_X(-2Z-mD))$. 
We denote by $H(m)$ the minimal anti-nef cycle on $X$ 
such that $H(m)\ge 2Z+mD$. 
Then $H^0(\cO_X(-2Z-mD))=H^0(\cO_X(-H(m)))$, 
and for an arbitrary $n\in \bbZ_{+}$ 
there exists $m(n)\in \bbZ_{+}$ such that $H(m(n))\ge nE$. 
Therefore, $H^0(\cO_X(-2Z-mD))\subset  I^2$ 
for sufficiently large $m$, and we obtain $I_{2Z}=I^2$.
\qed 
\end{proof} 

\begin{rem}\label{r:ZD3}
Assume that $A$ is elliptic.
It follows from Proposition \ref{p:Cq} and Corollary \ref{c:ellv} that 
$q(I)=0$ if and only if $ZD\ne 0$.
By an argument similar to the proof of Theorem \ref{I2}, 
we can prove that if $ZD\ne 0$, then $I=I_Z$ is normal 
if and only if $-ZD\ge 3$.
\end{rem}

\par 
For elliptic ideals in an elliptic singularity (not strongly elliptic), 
Remark \ref{r:ZD3} cannot be applied 
because the condition $ZD \ne 0$ does not hold in general. 
Next example shows that the condition $0 < -ZD <3$ is 
not necessary for $I_Z$ being not normal.

\begin{ex} \label{noti}
Suppose that $p\ge 1$ be an integer.
Let $ A=k[x,y,z]/(x^2+y^3+z^{6(p+1)})$ 
and assume that  $X$ is the minimal resolution. 
Then $E$ is a chain of $p+1$ nonsingular curves 
$E_0, E_1, \dots, E_p$, where $g(E_0)=1$, $E_0^2=-1$, $g(E_i)=0$, 
$E_i^2=-2$,  $E_{i-1}E_i=1$ for $1\le i \le p$ and 
$E_iE_j =0$ if $|i-j| \ge 2$.  
It is easy to see that $A$ is elliptic and $E_0$ is 
the minimally elliptic cycle.
Furthermore, $\m$ is a $p_g$-ideal and $p_g(A)=p+1$ 
by \cite[3.1, 3.10]{OWY4}.
Since $A$ is not strongly elliptic, 
there is a non-$p_g$-ideal $I_Z$ such that $-ZE_0=0$ 
(see Theorem \ref{pg=1} and Proposition \ref{p:Cq}).
Let $W=\sum_{i=0}^p(p+1-i)E_i$. 
Then $-W\sim K_X$ and the exceptional part of 
the divisors $\di_X(x)$, $\di_X(y)$, $\di_X(z)$ 
are $3W$, $2W$, $E$, respectively.
For $1\le n \le p+1$, 
let $D_n=\gcd(nE, W):=\sum_{i=0}^p\min(n,p+1-i)E_i$. 
(Our cycle $D_n$ coincides with $C_{n-1}$ in \cite[2.6]{Ok}.) 
Then $\cO_X(-2D_n)$ is generated (cf. \cite[3.6 (4)]{Ok}) 
and $D_n^2=-n$.
Let $I_n=I_{2D_n}$. Since the cohomological cycle of 
$(D_n)^{\bot}$ is $W-D_n$, we have 
$q(I_n)=p_g(A)-n$ by Proposition \ref{p:Cq}; 
note that $-(W-D_n)\sim K_X$ on a neighborhood of 
$\supp(W-D_n)=E_0\cup\cdots\cup E_{p-n}$.
Then $I_n=(x,y,z^{2n})$. 
We have $D_nE_0=0$ for $1\le n\le p$ and $D_{p+1}E_0=E_0^2=-1$.
Therefore, it follows from Remark \ref{r:ZD3} that 
$\overline{I_{p+1}^2}\ne I_{p+1}^2$, since $-2D_{p+1}E_0=2$.
However, the condition $0<-ZE_0\le 2$ is not necessary 
for $I_Z$ being not normal.
In fact, we have  $\overline{I_{n}^2}\ne I_{n}^2$ for all $1\le n \le p+1$ 
because $xz^n\not \in I_n^2$ and $(xz^n)^2\in I_n^4$.
\end{ex}

\section{The existence of strongly elliptic ideals}

Motivated by the fact that  in every two-dimensional excellent 
normal local domain which is not a rational singularity 
elliptic ideals always exist, it is natural to ask if it is also true for    
strongly elliptic ideals. 
We need some more preliminaries for proving that the answer is negative, in particular there are two-dimensional 
excellent normal local domains with no integrally closed 
$\m$-primary ideals $I$ with $\bar e_2(I)=1$. 
Assume  $(A,\m)$ is a two-dimensional excellent normal local domain 
over an  algebraically closed field.  
\par 
Let $\pi\colon X \to \Spec A $ be a resolution of singularity with exceptional set $E=\bigcup E_i$.

\begin{defn} \label{cycle}
Let $D\ge 0$ be a cycle on $X$
and let
\[
h(D)=\max 
\left\{h^1(\cO_{B}) \,\bigg| \,B \in \sum \bbZ E_i, \; B\ge 0, \; 
\supp (B)\subset \supp (D)\right\}
\]
We put $h^1(\cO_{B})=0$ if $B=0$.
There exists a unique minimal cycle $C\ge 0$ such that 
$h^1(\cO_C)=h(D)$ (cf. \cite[4.8]{Re}). 
We call $C$ the {\em cohomological cycle} of $D$.
The cohomological cycle of $E$ is denoted by $C_X$.
\par 
Note that $p_g(A)=h(E)$ and that 
if $A$ is Gorenstein and $\pi$ is the minimal resolution, then the 
canonical cycle  $Z_{K_X}=C_X$ (\cite[4.20]{Re}).
Clearly, the minimally elliptic cycle is the cohomological cycle of itself.
\end{defn}
\begin{rem}
(1) If $C_1$ and $C_2$ are cohomological cycles of some cycles on $X$ such that $C_1 \le C_2$ and $h^1(\cO_{C_1})<h^1(C_2)$, then $\supp(C_1) \ne \supp(C_2)$. 
\par 
(2) In general, for $q<p_g(A)$, cohomological cycles $C$ with $h^1(\cO_C)=q$ is not unique. 
For example, there exists a singularity whose minimal good resolution has two minimally elliptic cycles  (e.g., \cite{NO}). 
\end{rem}

\par 
The following result is a generalization of \cite[2.6]{OWY3}. 

\begin{prop} \label{p:CC}
Assume that $p_g(A)>0$ and 
let $D\ge 0$ be a reduced cycle on $X$.
Then the cohomological cycle $C$ of $D$ is 
the minimal cycle such that 
$H^0(X\setminus D, \cO_X(K_X))=H^0(X,\cO_X(K_X+C))$.
Therefore, 
if $g\:X'\to X$ is the blowing-up at a point in $\supp C$ and 
$E'$ the exceptional set for $g$, then the cohomological cycle $C'$ 
of $g^*D$ satisfies that $g_*^{-1}C \le C' \le g^*C-E'$  and 
$h^1(\cO_{C'})=h^1(\cO_C)$; 
we have $C'=g^*C-E'$ if $\cO_X(K_X+C)$ is 
generated at the center of the blowing-up.
\end{prop}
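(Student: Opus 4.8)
The plan is to prove the cohomological-cycle characterization first and then obtain the blow-up statement as a formal consequence, exactly as the word ``Therefore'' suggests. Throughout, write $\omega_X=\cO_X(K_X)$.

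\emph{Step 1 (the characterization).} For a cycle $B\ge 0$ with $\supp(B)\subseteq\supp(D)$, consider $0\to \omega_X\to \omega_X(B)\to \omega_X(B)|_B\to 0$. By adjunction $\omega_X(B)|_B$ is the dualizing sheaf $\omega_B$ of the proper one–dimensional scheme $B$, so Serre duality over the algebraically closed field $k$ gives $h^0(\omega_B)=h^1(\cO_B)$. Since $H^1(X,\omega_X)=0$ by Grauert--Riemenschneider vanishing (valid for resolutions of normal surface singularities in every characteristic), the long exact sequence yields the key dimension formula
\[
\dim_k H^0(X,\omega_X(B))=\dim_k H^0(X,\omega_X)+h^1(\cO_B).
\]
As $B$ runs over the cycles supported on $\supp(D)$, the spaces $H^0(X,\omega_X(B))$ form an increasing chain whose union is $H^0(X\setminus D,\omega_X)$; hence $\dim_k H^0(X\setminus D,\omega_X)=\dim_k H^0(X,\omega_X)+h(D)$, and for such $B$ the equality $H^0(X,\omega_X(B))=H^0(X\setminus D,\omega_X)$ holds if and only if $h^1(\cO_B)=h(D)$. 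By definition the minimal such $B$ is the cohomological cycle $C$ (cf. \cite[4.8]{Re}), proving the first assertion.

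\emph{Step 2 (transfer to the blow-up; upper bound and $h^1$).} Let $g\colon X'\to X$ be the blow-up at $p\in\supp C$ with exceptional curve $E'$, so that $K_{X'}=g^*K_X+E'$ and $g_*\omega_{X'}=\omega_X$. Since $p\in\supp C\subseteq\supp D$, we have $E'\subseteq\supp(g^*D)$ and $g$ restricts to an isomorphism $X'\setminus\supp(g^*D)\xrightarrow{\ \sim\ }X\setminus\supp D$, whence $H^0(X'\setminus g^*D,\omega_{X'})=H^0(X\setminus D,\omega_X)$. From $K_{X'}+(g^*C-E')=g^*(K_X+C)$ and the projection formula we get $H^0(X',\omega_{X'}(g^*C-E'))=H^0(X,\omega_X(C))=H^0(X\setminus D,\omega_X)$. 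Applying Step~1 on $X'$ to $g^*D$, the cohomological cycle $C'$ is the minimal cycle realizing $H^0(X'\setminus g^*D,\omega_{X'})$, so $C'\le g^*C-E'$. Moreover, comparing the dimension formula of Step~1 on $X'$ and on $X$ (using $\dim_k H^0(\omega_{X'})=\dim_k H^0(\omega_X)$) with the displayed equalities gives $h^1(\cO_{C'})=h^1(\cO_C)=h(D)$.

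\emph{Step 3 (lower bound and the generated case).} From $C'\le g^*C-E'$ we get $g_*C'\le g_*(g^*C-E')=C$. The crux is the monotonicity $h^1(\cO_{g_*C'})\ge h^1(\cO_{C'})$: writing $C'=g^*(g_*C')-kE'$ with $k=\mult_p(g_*C')-\mult_{E'}(C')$, the adjunction computation $\chi(\cO_{C'})=\chi(\cO_{g_*C'})+\binom{k}{2}$ combined with the behaviour of $h^0$ under the contraction shows the arithmetic genus does not increase on passing to the strict transform. Then $h^1(\cO_{g_*C'})\ge h^1(\cO_{C'})=h(D)$, while $\supp(g_*C')\subseteq\supp D$ forces $h^1(\cO_{g_*C'})\le h(D)$, so $h^1(\cO_{g_*C'})=h(D)$. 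By the minimality of the cohomological cycle $C$ (it is contained in every cycle supported on $\supp D$ with $h^1=h(D)$), we conclude $g_*C'\ge C$, hence $g_*C'=C$ and $C'\ge g_*^{-1}(g_*C')=g_*^{-1}C$. Finally, with $m=\mult_p C$ we have $g_*^{-1}C\le C'\le g^*C-E'=g_*^{-1}C+(m-1)E'$. If $\omega_X(C)$ is generated by global sections at $p$, pick $\eta$ generating the stalk; since $\di_{X'}(g^*\eta)=g^*\di_X(\eta)+E'$ and $\mult_p\di_X(\eta)=-m$, the form $g^*\eta$ has pole of order exactly $m-1$ along $E'$, so it lies in $H^0(\omega_{X'}(g^*C-E'))$ but in no $H^0(\omega_{X'}(g_*^{-1}C+eE'))$ with $e<m-1$; minimality then forces $\mult_{E'}(C')=m-1$, i.e. $C'=g^*C-E'$.

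Steps~1 and~2 are formal once one has Grauert--Riemenschneider vanishing and Serre duality on the cycle $B$. I expect the genuine obstacle to be the monotonicity $h^1(\cO_{g_*C'})\ge h^1(\cO_{C'})$ underlying the lower bound: the identity $\chi(\cO_{C'})=\chi(\cO_{g_*C'})+\binom{k}{2}$ is an immediate intersection-number computation, but one must bound the change of $h^0$ under the contraction (which can jump when branches through $p$ are separated), so the estimate $h^0(\cO_{C'})-h^0(\cO_{g_*C'})\le\binom{k}{2}$ requires the connectivity analysis of the fibres of $g$ over $\supp C$.
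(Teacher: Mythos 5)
Your Steps 1 and 2, and your treatment of the generated case, follow essentially the same route as the paper: the exact sequence $0\to\cO_X(K_X)\to\cO_X(K_X+F)\to\cO_F(K_X+F)\to 0$ together with duality and Grauert--Riemenschneider vanishing gives $h^1(\cO_F)=\ell_A\bigl(H^0(X,\cO_X(K_X+F))/H^0(X,\cO_X(K_X))\bigr)$, whence the characterization of $C$ as the minimal (equivalently, maximal-pole) cycle realizing $H^0(X\setminus D,\cO_X(K_X))$; the identity $K_{X'}+g^*C-E'=g^*(K_X+C)$ gives the upper bound $C'\le g^*C-E'$ and the equality $h^1(\cO_{C'})=h^1(\cO_C)$; and a section generating $\cO_X(K_X+C)$ at $p$ pulls back to a form with pole of order exactly $\mult_pC-1$ along $E'$, which pins down $C'=g^*C-E'$.

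The one place you diverge is the lower bound $g_*^{-1}C\le C'$, and there your argument has exactly the gap you flag: the estimate $h^0(\cO_{C'})-h^0(\cO_{g_*C'})\le\binom{k}{2}$ is asserted, not proved. It is true, and your route can be completed: pushing forward $0\to\cO_{X'}(-C')\to\cO_{X'}\to\cO_{C'}\to 0$ and using $g_*\cO_{X'}(kE'-g^*W)=\cO_X(-W)$ together with the fact that $R^1g_*\cO_{X'}(kE'-g^*W)\cong\cO_X(-W)\otimes R^1g_*\cO_{X'}(kE')$ has length $\binom{k}{2}$, one finds that $g_*\cO_{C'}$ is an extension of a length-$\binom{k}{2}$ skyscraper by $\cO_{g_*C'}$, so $h^0(\cO_{C'})=h^0(g_*\cO_{C'})\le h^0(\cO_{g_*C'})+\binom{k}{2}$. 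But this whole detour through $g_*C'$ is unnecessary, which is why the paper dismisses the inequality as clear: by minimality of $C$, for each component $E_i\le C$ there is a form $\eta\in H^0(X\setminus D,\cO_X(K_X))$ with pole of order exactly $\mult_{E_i}C$ along $E_i$ (otherwise $C-E_i$ would already realize $H^0(X\setminus D,\cO_X(K_X))$). Since $\di_{X'}(\eta)=g^*\di_X(\eta)+E'$, the pole order of $\eta$ along the strict transform $g_*^{-1}E_i$ is unchanged, and $\eta\in H^0(X',\cO_{X'}(K_{X'}+C'))$ forces $\mult_{g_*^{-1}E_i}C'\ge\mult_{E_i}C$, i.e. $g_*^{-1}C\le C'$. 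I recommend replacing your Step 3 lower bound by this one-line argument; as written, your proof is incomplete at the step you yourself identify as the crux.
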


\begin{proof}
Let $F>0$ be an arbitrary cycle with $\supp(F)\subset D$.
By the duality, we have $h^1(\cO_F)=h^0( \mathcal{O}_F(K_X+F))$. 
From the exact sequence 
\[
0 \to \cO_X(K_X) \to \cO_X(K_X+F) \to \cO_F(K_X+F) \to 0
\]
and the Grauert-Riemenschneider vanishing theorem, we have
\begin{equation}\label{eq:h1F}
h^1(\cO_F)=\ell_A(H^0(X, \cO_X(K_X+F))/H^0(X,\cO_X(K_X))).
\end{equation}
On the other hand, we have the inclusion
\[
H^0(X, \cO_X(K_X+F)) \subset H^0(X\setminus D, \cO_X(K_X)), 
\]
where the equality holds if $F$ is sufficiently large; if the equality holds, 
we obtain $h^1(\cO_F)=h(D)$, because the upper bound $\ell_A(H^0(X\setminus D, \cO_X(K_X))/H^0(X,\cO_X(K_X)))$ for $h^1(\cO_F)$ depends only on $\supp(D)$.
Clearly, the minimum of such cycles $F$ exists as the maximal poles of 
rational forms in $H^0(X\setminus D, \cO_X(K_X))$.
Let $D'=g^{-1}(D)$. Since $K_{X'}+g^*C-E'=g^*(K_X+C)$, we have 
\[
H^0(X', \cO_{X'}(K_{X'}+g^*C-E')) = H^0(X'\setminus D', \cO_X(K_{X'})).
\]
Hence $C' \le g^*C-E'$. The inequality $g_*^{-1}C \le C'$ is clear.
From \eqref{eq:h1F}, we have $h^1(\cO_{C'})=h^1(\cO_C)$.
If $\cO_X(K_X+C)$  is generated at the center of the blowing-up, then $\cO_{X'}(K_{X'}+g^*C-E')$ has no fixed components, and the minimality of the cycle $g^*C-E'$ follows.
\end{proof}

\begin{defn}
We define a reduced cycle $Z^{\bot}$ to be the sum of the components $E_i\subset E$ such that $ZE_i=0$.
\end{defn}

\par 
From \cite[3.4]{OWY2}, we have the following

\begin{prop}\label{p:Cq}
Let $I=I_Z$ be  represented by a cycle $Z$ on $X$ and denote by $C$  the cohomological cycle of $Z^{\bot}$.   
Assume $\bar{r}(I)=2$,   then $\cO_C(-Z)\cong \cO_C$ and $h^1(\cO_C)=q(I)$.
\end{prop}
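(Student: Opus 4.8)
The plan is to split the statement into two logically independent assertions treated by different means: the sheaf isomorphism $\cO_C(-Z)\cong\cO_C$, which I expect to hold with no hypothesis on the reduction number, and the numerical identity $h^1(\cO_C)=q(I)$, which is where $\br(I)=2$ enters. Indeed, by Theorem \ref{nr2} (via Proposition \ref{eq:q(nI)}) the assumption $\br(I)=2$ is equivalent to $q(I)=q(\infty I)$ together with $q(I)<p_g(A)$. Hence for the numerical part it suffices to prove $q(\infty I)=h^1(\cO_C)$ and then read off $q(I)=q(\infty I)=h^1(\cO_C)$. This is the point at which I would invoke \cite[Lemma 3.4]{OWY2}, of which the present proposition is essentially a repackaging.

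For the isomorphism, first note that $\supp(C)\subset Z^{\bot}$ forces $(-Z)E_i=0$ for every component $E_i$ of $C$, so the invertible sheaf $\cO_C(-Z)$ has degree $0$ on each component of the reduced curve $C_{\mathrm{red}}$. Next, since $\cO_X(-Z)$ is generated by its global sections (this is built into the notation $I_Z$, as $I\cO_X=\cO_X(-Z)$), its restriction $\cO_C(-Z)$ is generated by the images of the sections in $I=H^0(X,\cO_X(-Z))$. On each integral component a nonzero section of a degree-$0$ invertible sheaf is nowhere vanishing, and a general section in a base-point-free system is simultaneously nonzero on each of the finitely many components; such a section vanishes nowhere on $C_{\mathrm{red}}$, hence nowhere on $C$, and yields the trivialization $\cO_C(-Z)\cong\cO_C$. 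The same argument gives $\cO_C(-nZ)\cong\cO_C$ for every $n\ge 1$, which I will use below.

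For the numerical identity I would run the cohomology sequence of
\[
0 \to \cO_X(-nZ-C) \to \cO_X(-nZ) \to \cO_C(-nZ) \to 0,
\]
using $\cO_C(-nZ)\cong\cO_C$ and the vanishing $H^2(X,\cdot)=0$ (the base $\Spec A$ is affine and $\pi$ has one-dimensional fibres, so $R^2\pi_*=0$ and the Leray spectral sequence kills $H^2$). This produces a surjection $H^1(\cO_X(-nZ))\twoheadrightarrow H^1(\cO_C)$, whence $q(nI)\ge h^1(\cO_C)$ for all $n$ and in particular $q(\infty I)\ge h^1(\cO_C)$. The reverse inequality is the heart of the matter: I must show that $H^1(\cO_X(-nZ-C))\to H^1(\cO_X(-nZ))$ is the zero map for $n\gg 0$. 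Here the two defining features of $C$ are brought to bear — that $C$ is the \emph{cohomological} cycle of $Z^{\bot}$, so $h^1(\cO_B)\le h^1(\cO_C)$ for every $B$ supported on $Z^{\bot}$ and $C$ is characterized as in Proposition \ref{p:CC}, and that on the components $E_i\not\subset Z^{\bot}$ the degree of $-nZ-C$ grows without bound. Feeding the latter into a R\"ohr-type vanishing (Theorem \ref{t:rohr}) to annihilate the contribution of the components outside $Z^{\bot}$, I expect $h^1(\cO_X(-nZ))=h^1(\cO_C)$ for $n\gg 0$, i.e. $q(\infty I)=h^1(\cO_C)$.

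The main obstacle is precisely this last vanishing: showing that in the limit $n\to\infty$ the first cohomology of $\cO_X(-nZ)$ is carried entirely by the cohomological cycle of $Z^{\bot}$ and receives nothing from the components $E_i$ with $ZE_i<0$. Controlling $H^1(\cO_X(-nZ-C))$ uniformly in $n$ and matching it against the maximality that defines $C$ is the delicate step, and it is exactly the content I would extract from \cite[Lemma 3.4]{OWY2}. Once it is in hand, the desired equality $h^1(\cO_C)=q(I)$ follows by combining $q(\infty I)=h^1(\cO_C)$ with the equivalence $\br(I)=2\Longleftrightarrow q(I)=q(\infty I)$ recorded above.
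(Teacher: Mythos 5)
Your proposal is correct and ultimately rests on the same source as the paper, whose entire argument is the citation ``From \cite[3.4]{OWY2}, we have the following'': the stabilization $q(\infty I)=h^1(\cO_C)$ is exactly the content of that lemma, and your reduction of the statement to it via Theorem \ref{nr2} (namely $\br(I)=2\Rightarrow q(I)=q(\infty I)$), together with your direct nowhere-vanishing-section argument for $\cO_C(-Z)\cong\cO_C$ (which indeed needs only that $\cO_X(-Z)$ is generated and has degree $0$ on $\supp(C)\subset\supp(Z^{\bot})$), is sound. One caution: your suggested route to the stabilization via Theorem \ref{t:rohr} applied to $-nZ-C$ would not go through literally, since R\"ohr's criterion requires positive degree against every cycle occurring in a computation sequence for $Z_f$ and this fails on the components of $Z^{\bot}$; but as you explicitly defer that step to \cite[3.4]{OWY2} anyway, this does not create a gap.
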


\par 
The converse of the result above is described as follows.

\begin{prop}
If $C$ is the cohomological cycle of a cycle on $X$ 
with $h^1(\cO_C)=q>0$, 
then there exist a resolution $Y\to \Spec A$ and a cycle $Z>0$ on 
$Y$ such that $\cO_Y(-Z)$ is generated and $q(I_Z)=q(\infty I_Z)=q$. 
\end{prop}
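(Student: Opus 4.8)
The plan is to produce, on a suitable resolution $Y\to\Spec A$ obtained from $X$ by blowing up, an anti-nef cycle $Z>0$ with $\cO_Y(-Z)$ generated by global sections which is numerically trivial along $\supp(C)$, and then to compute the two numbers $q(I_Z)=h^1(\cO_Y(-Z))$ and $q(2I_Z)=h^1(\cO_Y(-2Z))$. Once I show both equal $q$, Proposition \ref{q(nI)}(2) forces the sequence $\{q(nI_Z)\}_{n\ge 1}$ to be constant, so $q(\infty I_Z)=q(I_Z)=q$, and $I_Z$ is $\m$-primary and integrally closed because $\cO_Y(-Z)$ is generated with $\supp(Z)=E$. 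Note that $A$ is not assumed elliptic, so I must avoid the elliptic-specific vanishing and work with general tools.

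First I would construct $Z$. Writing $\Gamma=\supp(C)$, I choose a cycle $Z>0$ with $ZE_i=0$ for every $E_i\subset\Gamma$ and $ZE_i<0$ for every other component; negative definiteness of the intersection form guarantees that such an effective anti-nef $Z$ with $\supp(Z)=E$ exists, and then $Z^{\perp}=\Gamma$ with $h(Z^{\perp})=h^1(\cO_C)=q$, so $C$ is the cohomological cycle of $Z^{\perp}$. Blowing up the finitely many base points of $\cO(-Z)$ I may assume $\cO_Y(-Z)$ is globally generated; by Proposition \ref{p:CC} such blow-ups leave $h^1(\cO_C)=q$ unchanged. Since $\cO_Y(-Z)$ is generated and has degree $-ZE_i=0$ on each reduced component of $\Gamma$, a general global section $f$ restricts to a nowhere-zero constant on each such component, so its horizontal divisor is disjoint from $\Gamma$. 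Hence $f$ is a nowhere-vanishing section of $\cO_Y(-Z)$ near $C$, which gives the genuine triviality $\cO_C(-Z)\cong\cO_C$ (and therefore $\cO_C(-nZ)\cong\cO_C$ for all $n\ge 1$) even when $C$ is non-reduced.

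The heart of the proof is the cohomology computation. For $n=1,2$ I use the exact sequence
\[
0\to \cO_Y(-nZ-C)\to \cO_Y(-nZ)\to \cO_C(-nZ)\to 0 .
\]
Because $Y$ resolves a surface singularity, $H^2$ of every line bundle vanishes, so the restriction $H^1(\cO_Y(-nZ))\to H^1(\cO_C(-nZ))$ is surjective and $h^1(\cO_Y(-nZ))\ge h^1(\cO_C)=q$. To get the reverse inequality I would prove $H^1(\cO_Y(-nZ-C))=0$ for $n=1,2$; this makes the restriction an isomorphism and yields $h^1(\cO_Y(-nZ))=h^1(\cO_C(-nZ))=q$. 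The vanishing is where the choice of $Z$ and the defining property of the cohomological cycle enter: off $\Gamma$ the divisor $-nZ$ is strictly negative, while along $\Gamma$ subtracting $C$ removes exactly the cohomology captured by $C$ (Definition \ref{cycle}, Proposition \ref{p:CC}). Concretely I would verify R\"ohr's hypothesis $(-nZ-C)\,C' > -2\chi(C')$ for the cycles $C'$ occurring in a computation sequence for $Z_f$ (Theorem \ref{t:rohr}), taking the coefficients of $Z$ off $\Gamma$ large enough, or equivalently rewrite $-nZ-C=K_Y+(\text{nef})$ and invoke Grauert--Riemenschneider.

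With $q(I_Z)=h^1(\cO_Y(-Z))=q$ and $q(2I_Z)=h^1(\cO_Y(-2Z))=q$ in hand, the conclusion follows as sketched in the first paragraph. I expect the genuine obstacle to be the simultaneous control in the construction and the vanishing step: arranging that $\cO_Y(-Z)$ becomes globally generated while $ZE_i=0$ is preserved on $\Gamma$ (so that the required blow-ups stay off $\Gamma$, or their effect is absorbed by Proposition \ref{p:CC}), and then verifying the positivity hypothesis of Theorem \ref{t:rohr} for $-nZ-C$ precisely on the computation-sequence cycles supported on $\Gamma$, where the $Z$-contribution vanishes and one must invoke the minimality of the cohomological cycle $C$ directly.
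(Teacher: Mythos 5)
Your overall architecture is sound up to one step, and the final logical move (showing $q(I_Z)=q(2I_Z)=q$ and invoking Proposition \ref{q(nI)}(2)) is fine. The fatal gap is the vanishing $H^1(\cO_Y(-nZ-C))=0$, which neither of your two suggested tools can deliver and which is the whole content of the reverse inequality. In R\"ohr's criterion (Theorem \ref{t:rohr}) applied to $L=-nZ-C$, the cycles $C'>0$ occurring at the start of a computation sequence beginning inside $\supp(C)$ satisfy $ZC'=0$ by your very choice of $Z$, so the required inequality degenerates to $-CC'>-2\chi(C')$, a condition on $C$ alone that no enlargement of the coefficients of $Z$ off $\supp(C)$ can influence. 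Taking $C'=E_0$ an irreducible component of $\supp(C)$ of genus $g_0$, the condition reads $CE_0<2(1-g_0)$; for instance if $C=E_0$ is an irreducible curve of genus $q\ge 2$ with $E_0^2=-1$ or $-2$ sitting inside a larger exceptional divisor, it fails. The Grauert--Riemenschneider variant fails for the same structural reason: writing $-nZ-C=K_Y+N$ forces $N$ to be nef along $\supp(C)$ with $NC=-(K_Y+C)C-nZC=2\chi(C)=2(h^0(\cO_C)-q)$, which is negative as soon as $q\ge 2$ and $C$ is connected. Since the Proposition must cover all $q>0$, your proof of the key step collapses exactly in the range $q\ge 2$.

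The deeper issue is that what you are trying to prove at $n=1$ --- that $h^1(\cO_Y(-Z))$ equals $h^1$ of the cohomological cycle of $Z^{\bot}$ --- is simply not true for an arbitrary generated anti-nef cycle: Example \ref{e} exhibits an integrally closed ideal with $q(I)=1$ while $q(\infty I)=0$, so the first value of the sequence can strictly exceed the stable one, and the stable value is the only one controlled by the cohomological cycle of $Z^{\bot}$ (this is the content of \cite[3.6]{Ok}, quoted via Proposition \ref{p:Cq}). The paper's proof is built precisely to dodge this point: it takes for $Z_h$ the exceptional part of the divisor of a general element $h\in I_W$ (which also settles, for free, your secondary worry about making $\cO_Y(-Z)$ globally generated while keeping $Z E_i=0$ exactly on $\supp(C)$ --- it blows up the intersection points of the strict transform of $\{h=0\}$ with the cohomological cycle, using Proposition \ref{p:CC} to preserve $h^1=q$), then cites \cite[3.6]{Ok} to get $h^1(\cO_Y(-nZ_h))=q$ only in the stable range $n\ge p_g(A)$, and finally sets $Z=p_g(A)Z_h$ so that every $h^1(\cO_Y(-mZ))$ one needs already lies in that range. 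If you want to salvage your argument, replace your direct computation at $n=1,2$ by this passage to a high multiple; as written, the step ``verify R\"ohr's hypothesis for $-nZ-C$'' cannot be carried out.
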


\begin{proof}
There exists a cycle $W$ on $X$ such that $WE_i<0$ for all $E_i$ and
 $\cO_X(-W)$ is generated (cf. the proof of \cite[4.5]{OWY1}). 
Let $h\in I_W$ be a general element.
First we show that there exist a resolution $Y\to \Spec A$ 
and a cohomological cycle $D$ on $Y$ with $h^1(\mathcal{O}_D)=q$ 
such that if $Z_h$ is the exceptional part of $\di_Y(h)$, 
then  $Z_h^{\bot}=D_{red}$.
We shall obtain the resolution $Y$ from $X$ 
by taking blowing-ups appropriately as follows.
Let $H\subset X$ be an irreducible component 
of the proper transform of $\di_{\Spec A}(h)$ intersecting $C$ 
at a point $p$, and let $g\:X'\to X$ be the blowing-up at $p$.
Let $C'$ be the cohomological cycle of $g^*C$.
Then $h^1(\cO_{C'})=q$ by Proposition \ref{p:CC}.
If the intersection number $C'(g_*^{-1}H)$ is positive, 
then we take again the blowing-up at the intersection point. 
By the property of the intersection number of curves 
and Proposition \ref{p:CC}, taking blowing-ups in this manner, we 
obtain a resolution $Y\to \Spec A$ and a cohomological cycle $D$ 
which satisfy the conditions described above; 
in fact, for an exceptional prime divisor $F$ on $Y$, we have that  
$F\le Z_h^{\bot}$ if and only if $F$ does not intersect the proper 
transform of $\di_{\Spec A}(h)$.
Thus it follows from \cite[3.6]{Ok} (cf. \cite[3.4]{OWY2}) that
$\cO_Y(-nZ_h)$ is generated and $h^1(\cO_Y(-nZ_h))=q$ 
for $n\ge p_g(A)$. 
Then the cycle $Z:=p_g(A)Z_h$ satisfies the assertion.
\end{proof}

\begin{cor} \label{existence}
There exists a strongly  elliptic ideal in $A$ if and only if there exists a cohomological cycle $C$ of a cycle on a resolution 
$Y\to \Spec A$ such that $h^1(\cO_C)=p_g(A)-1$.  
\end{cor}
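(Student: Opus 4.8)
The plan is to read this corollary as a repackaging of the two propositions immediately preceding it (Proposition \ref{p:Cq} and its unlabelled converse) together with the characterisation of strongly elliptic ideals in Theorem \ref{e_2=1}. The key dictionary is that, by Theorem \ref{e_2=1}, an integrally closed $\m$-primary ideal $I$ is strongly elliptic precisely when $q(I)=q(\infty I)=p_g(A)-1$, and that such an $I$ is in particular elliptic, so $\bar{r}(I)=2$. Thus the existence of a strongly elliptic ideal is equivalent to the existence of a cycle $Z$ with $q(I_Z)=q(\infty I_Z)=p_g(A)-1$, and the whole statement reduces to matching this against the existence of a suitable cohomological cycle.

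For the forward implication I would start from a strongly elliptic ideal $I=I_Z$, represented by a cycle $Z$ on some resolution $X\to \Spec A$. Theorem \ref{e_2=1} gives $q(I)=q(\infty I)=p_g(A)-1$ and $\bar{r}(I)=2$, so Proposition \ref{p:Cq} applies verbatim: taking $C$ to be the cohomological cycle of the reduced cycle $Z^{\perp}$, we obtain $h^1(\cO_C)=q(I)=p_g(A)-1$. Since $Z^{\perp}$ is a cycle on the resolution $X$, this exhibits $C$ as a cohomological cycle of a cycle on a resolution with exactly the required cohomology, and this direction is complete with no further computation.

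For the converse I would split on the value of $p_g(A)$. When $p_g(A)\ge 2$ the hypothesis supplies a cohomological cycle with $h^1(\cO_C)=p_g(A)-1>0$, so the preceding (converse) Proposition directly produces a resolution $Y\to \Spec A$ and a cycle $Z>0$ with $\cO_Y(-Z)$ generated and $q(I_Z)=q(\infty I_Z)=p_g(A)-1$; by Theorem \ref{e_2=1} the ideal $I_Z$ is then strongly elliptic. In the boundary case $p_g(A)=1$ the converse Proposition is silent, since it requires $q>0$, so here I would instead invoke the standard fact (already used in the proof of Theorem \ref{pg=1}) that $A$ always carries an integrally closed $\m$-primary ideal $I$ with $q(I)=0$; then $q(\infty I)=0=p_g(A)-1$ and $\bar{e}_2(I)=p_g(A)-q(\infty I)=1$, so $I$ is strongly elliptic by Theorem \ref{e_2=1}.

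The only genuine subtlety, and hence the part I would flag as the main obstacle, is precisely this degenerate case $p_g(A)=1$: there the target invariant $q=p_g(A)-1$ vanishes, so the geometric construction of the converse Proposition does not apply as stated and must be replaced by the existence of an ideal with $q(I)=0$. Everything else is bookkeeping, because the real geometric content—building a cycle from a prescribed cohomological cycle, and reading $q(I)$ off the cohomological cycle of $Z^{\perp}$—has already been established in the two propositions. I would also remark that the case $p_g(A)=0$ is vacuous on both sides (no cohomological cycle can have $h^1=-1$, and $A$ is then rational with no elliptic ideals), so one may harmlessly assume $p_g(A)>0$ throughout.
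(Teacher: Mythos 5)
Your proposal is correct and follows exactly the route the paper intends: the corollary is stated without a separate proof precisely because it is the combination of Theorem \ref{e_2=1} (strongly elliptic $\Leftrightarrow$ $q(I)=q(\infty I)=p_g(A)-1$), Proposition \ref{p:Cq} for the forward direction via the cohomological cycle of $Z^{\bot}$, and the unlabelled converse proposition for the backward direction. Your explicit handling of the boundary case $p_g(A)=1$ (where the converse proposition's hypothesis $q>0$ fails and one falls back on the existence of an ideal with $q(I)=0$) is a correct and worthwhile clarification of a point the paper leaves implicit.
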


\begin{ex} \label{no}
Let $C$ be a nonsingular curve of genus $g\ge 2$ and $D$ an divisor 
on $C$ with $\deg D>0$. Let $A= \bigoplus _{n\ge 0} 
H^0( C, \cO_C( nD))$ and assume that $a(A)=0$. 
Then $p_g(A)=g$ and 
$A$ has no strongly  elliptic ideals because any cycle $F$ on any 
resolution has $h^1(\cO_F)=0$ or $g$.
More precisely, if $Z E_0 = 0$, where $E_0\subset E$ denotes the 
curve of genus $g$, then $I_Z$ is a $p_g$-ideal; 
otherwise, $q(\infty I_Z)=0$.
\end{ex}

\par 
Next example shows that there are local normal Gorenstein domains  that always have  strongly elliptic ideals.
\begin{ex} \label{si}
Let $C$ be a nonsingular curve of genus $g\ge 2$ and put
\[
A=\bigoplus_{n\ge 0}H^0(\cO_C(nK_C)).
\]
Then $A$ is a normal Gorenstein ring by \cite{Wt}.
Suppose that $f\: X\to \Spec A$ is the minimal resolution.
We have
\[
p_g(A)=\sum_{n\ge 0}h^1(\cO_C(nK_C))=g+1
\]
by Pinkham's formula \cite{Pi}, $E\cong C$, 
$\cO_E(-E)\cong \cO_E(K_E)$, and $K_X=-2E$ 
(cf. \cite[4.6]{OWY1}).
Let $Y\to X$ be the blowing-up at a point $p\in E$ 
and let $E_1$ be the fiber of $p$ and $E_0$ 
the proper transform of $E$.
By Proposition \ref{p:CC}, we have $C_Y=2E_0+E_1$.
It follows from (b) of the theorem in \cite[4.8]{Re} that 
$h^1(\cO_{E_0})\le h^1(\cO_{nE_0})<p_g(A)$ for every $n\ge 1$.
Hence the cohomological cycle of $E_0$ is $E_0$ and $h^1(\cO_{E_0})=g=p_g(A)-1$. 
Therefore, $A$ has a strongly  elliptic ideal by Corollary \ref{existence}.
\par 
Next we construct a  strongly  elliptic ideal. Take a general linear form $L\in A_1\subset A$ and suppose that $\sup(\di_X(L)-E)\cap E$ consists of $\deg K_C$ points $p_1, \dots, p_{2g-2} \in E$.
Let $\phi\: X'\to X$ be the blowing-up at $\{p_1, \dots, p_{2g-2}\}$, and let $F_0$ be the proper transform of $E$ and $F_i=\pi^{-1}(p_i)$.
Let $Z=F_0+2(F_1+\cdots+F_{2g-2})$. 
Then $\cO_{X'}(-Z)$ is generated, $ZC_{X'}\ne 0$, and $ZF_0=0$.
Thus $q(I_Z)=g=p_g(A)-1$ (cf. \cite[3.4]{OWY2}).
Moreover, we have that $\ell_A(A/I_Z)=g$ by Theorem \ref{kato} and 
$I_Z=\overline{\m^2}+(L)$. Note that if $C$ is not hyperelliptic, then 
$\m^2$ is normal, because $A$ is a standard graded ring.
\end{ex}

\begin{acknowledgement}
We would like to thank Dr. J{\'a}nos Nagy and Professor Andr{\'a}s N{\'e}methi for their remark on Proposition \ref{p:CC}. 
We would also like to thank the referee for
useful suggestions that improved the presentation of the paper. Kei-ichi Watanabe would
like to thank the Department of Mathematics of the University of Genoa for the hospitality
during his stay in Genoa, where this joint work started.
\end{acknowledgement}


\end{document}